\renewcommand\section{\@startsection{section}{1}{\z@}%
           {25\p@ \@plus 6\p@ \@minus 3\p@}%
           {10\p@ \@plus 6\p@ \@minus 3\p@}%
           {\fontsize{13pt}{0cm}\selectfont\bfseries\boldmath}}
\renewcommand\subsection{\@startsection{subsection}{2}{\z@}%
           {13\p@ \@plus 6\p@ \@minus 3\p@}%
           {6\p@ \@plus 6\p@ \@minus 3\p@}%
           {\fontsize{12pt}{0cm}\itshape}}
\renewcommand\subsubsection{\@startsection{subsubsection}{3}{\z@}%
           {12\p@ \@plus 6\p@ \@minus 3\p@}%
           {\p@}%
           {\normalfont\normalsize\itshape}}
\renewcommand{\paragraph}[1]{%
  \par
  \addvspace{\medskipamount}
  \noindent
  \textbf{#1\@addpunct{.}}\enspace\ignorespaces
}
\let\oldtocsection=\tocsection
\let\oldtocsubsection=\tocsubsection
\let\oldtocsubsubsection=\tocsubsubsection
\renewcommand{\tocsection}[2]{
\hspace{0em}\bfseries\oldtocsection{#1}{#2}}
\renewcommand{\tocsubsection}[2]{\hspace{1em}\small\oldtocsubsection{#1}{#2}}
\renewcommand{\tocsubsubsection}[2]{\hspace{2em}\small\oldtocsubsubsection{#1}{#2}}
\crefname{subsection}{Subsection}{Subsection}
\DeclareSymbolFont{EulerExtension}{U}{euex}{m}{n}
\DeclareMathSymbol{\euintop}{\mathop} {EulerExtension}{"52}
\DeclareMathSymbol{\euointop}{\mathop} {EulerExtension}{"48}
\DeclareSymbolFont{cmletters}{OML}{cmm}{m}{it}
\DeclareSymbolFont{cmsymbols}{OMS}{cmsy}{m}{n}
\DeclareSymbolFont{cmlargesymbols}{OMX}{cmex}{m}{n}
\DeclareMathSymbol{\myjmath}{\mathord}{cmletters}{"7C}
\DeclareMathSymbol{\myamalg}{\mathbin}{cmsymbols}{"71}
\DeclareMathSymbol{\mycoprod}{\mathop}{cmlargesymbols}{"60}
\let\jmath\myjmath
\newtheorem{theorem}{Theorem}[section]
\newtheorem{proposition}[theorem]{Proposition}
\newtheorem{lemma}[theorem]{Lemma}
\newtheorem{corollary}[theorem]{Corollary}
\theoremstyle{definition}
\newtheorem{remark}[theorem]{Remark}
\newtheorem{definition}[theorem]{Definition}
\newtheorem*{claim}{Claim}
\newtheorem{step}{Step}
\newtheorem*{settings}{Settings}
\let\expandafter\oldproof\csname\string\proof\endcsname
\let\oldendproof\endproof
\renewenvironment{proof}[1][\proofname]{%
  \oldproof[\bfseries\itshape #1] 
}{\oldendproof}
\numberwithin{equation}{section}
\renewcommand{\theequation}{\thesection.\arabic{equation}}
\newcommand{\toinfty}{\to\infty}
\newcommand{\dsp}{\displaystyle}   
\newcommand{\eps}{\varepsilon}          
\renewcommand{\l}{\left}
\renewcommand{\r}{\right}
\newcommand{\la}{\left\langle}
\newcommand{\ra}{\right\rangle}
\newcommand{\R}{{\bf R}}
\newcommand{\N}{{\bf N}}
\newcommand{\NN}{{\bf Z}_{\ge 0}}
\newcommand{\Z}{{\bf Z}}
\newcommand{\RN}{\R^N}
\newcommand{\ZN}{{\bf Z}^N}
\newcommand{\0}{\setminus\{0\}}
\renewcommand{\d}{\partial}
\newcommand{\intrn}{\int_{\RN}}
\newcommand{\s}{\,\,\,}
\newcommand{\eqntag}{\addtocounter{equation}{1}\tag{\theequation}}
\newcommand{\Hrn}{H^1(\RN)}
\newcommand{\Hdrn}{\dot{H}^1(\RN)}
\newcommand{\Wdrn}{\dot{W}^{1,p}(\RN)}
\newcommand{\Wmp}{W^{m,p}(\RN)}
\newcommand{\Hdsp}{\dot{H}^{s,p}(\RN)}
\newcommand{\2}{2^*}   
\newcommand{\p}{p^*}
\newcommand{\ppm}{p^*_m}        
\newcommand{\dsl}[3]{#1_{#2,#3}}        
\newcommand{\bbl}[2]{#1^{#2}}                   
\newcommand{\Anrl}{ \mathcal{A}_{n,R,L} }
\newcommand{\inv}[1]{ #1^{-1} }
\begin{document}


\title[Profile Decomposition in inhomogeneous Sobolev spaces]{Profile decomposition in Sobolev spaces and decomposition of integral functionals~I: \\ inhomogeneous case} 

\author[M.~Okumura]{Mizuho Okumura} 




\renewcommand{\thefootnote}{\fnsymbol{footnote}}
\footnote[0]{2020\textit{ Mathematics Subject Classification}.
46B50, 46B06, 46E35.}

\keywords{ 
Profile decomposition, 
decomposition of integral functionals,
lack of compactness, 
concentration-compactness,
Brezis-Lieb lemma, 
$G$-weak convergence, 
dislocation space, 
$G$-complete continuity.
}
\address{
Graduate School of Science \endgraf
Tohoku University \endgraf
Sendai 980-8578 \endgraf
Japan
}
\email{okumura.mizuho.p3@dc.tohoku.ac.jp}




\maketitle

\begin{abstract}
The present paper is devoted to analyzing the lack of compactness of bounded sequences in \emph{inhomogeneous} Sobolev spaces, where bounded sequences might fail to be compact due to an isometric group action, that is, \emph{translation}. It will be proved that every bounded sequence $(u_n)$ has (possibly infinitely many) \emph{profiles}, and then the sequence is asymptotically decomposed into a sum of translated profiles and a double-suffix residual term, where the residual term becomes arbitrarily small in appropriate Lebesgue or Sobolev spaces of lower order. To this end, functional analytic frameworks are established in an abstract way by making use of a group action $G$, in order to characterize profiles by $(u_n)$ with $G$. One also finds that a decomposition of the Sobolev norm into profiles is bounded by the supremum of the norm of $u_n$. Moreover, the profile decomposition leads to results of decomposition of integral functionals of subcritical order. It is noteworthy that the space where the decomposition of integral functionals holds is the same as that where the residual term is vanishing.
\end{abstract}

{\small %
\tableofcontents
}


\section{Introduction}\label{section;intro}

\subsection{Reviews of known results}\label{subsection;prologue}
\paragraph{Prologue}
Profile decomposition is a method of asymptotic analysis especially of sequences in function spaces such as Lebesgue spaces, Sobolev spaces, Besov spaces, Triebel-Lizorkin spaces and so on, where bounded sequences may have defect of compactness, i.e., bounded sequences do not necessarily have strongly convergent subsequences. 
In this paper, we revisit profile decomposition in specific Banach spaces, i.e., Hilbert spaces and inhomogeneous Sobolev spaces.

Compactness plays a crucial role,  e.g., in studying convergence of approximate solutions to PDEs and convergence of minimizing sequences for energy functionals in variational problems. Hence the defect of compactness of functional sequences is a critical problem of great importance. 
Profile decomposition is a generic method of analyzing such sequences, revealing what prevents sequences from converging strongly.

Before recalling what profile decomposition is, let us briefly review classical ideas of analysis of non-compact sequences.
H.~Brezis and E.~Lieb~\cite{B-L}
showed that, for a bounded sequence $(u_n)$ in $L^p(\RN) \ (1\le p< \infty)$
with its pointwise limit $u_0\in L^p(\RN)$,  $\|u_n\|_{L^p(\RN)}^p$  is asymptotically decomposed into two pieces of $\|u_0\|_{L^p(\RN)}^p$ and $\|u_n - u_0\|_{L^p(\RN)}^p$, 
where the defect of compactness is described as $u_n-u_0$. This is called the Brezis-Lieb lemma and is interpreted as a refinement of the Fatou lemma, since it picks up a missing term $u_n-u_0$ to obtain the equality  $\|u_n\|_{L^p(\RN)}^p = \|u_0\|_{L^p(\RN)}^p + \|u_n - u_0\|_{L^p(\RN)}^p + o(1)$, instead of the inequality in the Fatou lemma. 
P.L.~Lions in~\cite{Lions 84 1,Lions 84 2, Lions 85 1, Lions 85 2}
studied the lack of compactness 
in Sobolev spaces caused by scaling  invariance properties of translations and dilations. 
His famous and celebrated ideas in~\cite{Lions 84 1, Lions 84 2}, named the concentration-compactness principle, classifies every sequence of probability measures into three disjoint situations: the sequence is vanishing (\emph{vanishing}); the sequence is tight (\emph{compactness}); masses of measures are asymptotically splitting into at least two pieces (\emph{dichotomy}). The second concentration-compactness principle in~\cite{Lions 85 1, Lions 85 2} provides more precise information provided that the sequence is in $\Hdrn$. 
Lions' theory has many applications to a variety of studies of mathematical analysis, e.g., PDEs, Functional Analysis and Calculus of Variations.  
One fundamental and significant applications of those works to variational problems is a minimization problem subordinated to Sobolev embeddings 
\begin{gather*}
\Hrn \hookrightarrow L^p(\RN) \quad (p\in ]2,\2[), \\
\Hdrn \hookrightarrow L^{\2}(\RN) ,
\end{gather*}
where $N\ge 3$ (for simplicity).
Note that the embeddings above are not compact.  
Consider the minimization problem:
for $2<p<\2$, 
\begin{gather*}
S_p\coloneqq \inf \l\{ \int_{\RN}|\nabla u(x)|^2+|u(x)|^2  \,\dd x ; \ 
u\in\Hrn, \ \int_{\RN}|u(x)|^p  \,\dd x  =1 \r\}, 
\end{gather*}
and take a minimizing sequence 
$(u_n)$. 
Roughly speaking, the concentration-compactness argument proceeds as follows:
firstly, find vectors $y_n\in\RN$ such that the  pulled back sequence $(u_n(\cdot+y_n))$ has a weakly and pointwise convergent subsequence (still denoted by the same numbering), say $u_n(\cdot+y_n) \to u_0$ (the existence  of such vectors is guaranteed since the minimizing sequence is not ``\emph{vanishing}"); 
secondly, the pulled back sequence $(u_n(\cdot+y_n))$ is also a minimizing sequence to $S_p$ ($2 < p < 2^*$);
finally, the Brezis-Lieb lemma and the concavity of $\R\ni t\mapsto t^{2/p}\in\R$ indicate that ``\emph{dichotomy}" does not occur and hence, $u_n(\cdot+y_n)$ strongly converges to $u_0$ in $\Hrn$. 
The above observation suggests that $u_n$ asymptotically behaves like $u_0(\cdot-y_n)$.  In our context, $u_0$ is called a \emph{profile} or a \emph{bubble}, and translations by vectors $(y_n)$ are called \emph{dislocations}, and finally $u_0(\cdot-y_n)$ is called a \emph{dislocated profile}. 
In general, dislocations consist of several group actions such as translations and dilations which are often isometric actions onto corresponding function spaces.
The argument based on the concentration-compactness principle is interpreted as follows.  
One seeks for a dislocated profile whose energy is 
a large portion of that of $u_n$. The existence of such a large profile is ensured if \emph{vanishing} does not occur. 
Such a possibility of dislocations is one of the main reasons 
of the lack of compactness of $(u_n)$, so the concentration-compactness theory 
is a precious method of asymptotic analysis of sequences with the lack of compactness.
In the case of the minimization problem 
\begin{equation}
S_{\2}\coloneqq \inf \l\{ \int_{\RN}|\nabla u(x)|^2  \,\dd x ; \ 
u\in\Hdrn, \ \int_{\RN}|u(x)|^{\2}  \,\dd x  =1 \r\},
\end{equation}
a similar argument is carried out to find a large bubble, and the concentration-compactness principle in~\cite{Lions 85 1, Lions 85 2} provides an alternative estimate of $L^{\2}$-norm, instead of the Brezis-Lieb lemma. 
A more detailed principle of concentration-compactness subordinated to 
the critical embedding
$\Hdrn \hookrightarrow L^{\2}(\RN)$
is described in, e.g., M.~Willem~\cite[Lemma~1.40]{W1}.  
Roughly speaking, 
the defect of compactness of a bounded sequence in $\Hdrn$ is analyzed 
by taking into account the following three ingredients: 
the weak limit; 
the energies of profiles translated to the space infinity;  
the energies of concentrated profiles. 
Then the energy or mass of the given bounded sequence is asymptotically 
decomposed into the above three parts of energies.
As is seen in the above reviews, Lions' ideas 
focus on energies or masses of bubbles instead of focusing on bubbles themselves.
Thus the classical principles of concentration-compactness can be regarded as a method of \emph{mass decomposition} or \emph{decomposition in measure} in contrast to \emph{profile decomposition}.

In~1984, M.~Struwe~\cite{Struwe} constructed a way of analyzing the defect of compactness especially of Palais-Smale sequences
subordinated to specific variational situations with no 
compactness conditions, and found that any bounded Palais-Smale sequence is asymptotically decomposed into a finite sum of dislocated profiles which are all weak solutions to suitable elliptic PDEs. 
Struwe method was a kickoff of profile decomposition 
in contrast to mass decomposition, and his work is well known as the \emph{global compactness} results. 
Several years later, Brezis and J.M.~Coron~\cite{BrezisCoron85} and A.~Bahri and Coron~\cite{BahriCoron88} established the global compactness results and they also revealed the so-called ``mutual orthogonality condition" or ``almost orthogonality condition". 

\paragraph{Profile decomposition}
From 90's, profile decomposition has been studied in more general situations not only in the variational situations:  
namely, ``for any'' bounded sequences in function spaces.
There have been many researches on profile decomposition, but we here pick up some of them and classify into two types in terms of residual terms: 
(i) profile decomposition with a single suffix residual term;  
(ii) profile decomposition with a double suffix residual term.  

\paragraph{Single-suffix profile decomposition}
S. Solimini~\cite{Solimini} studied the profile decomposition 
in the  homogeneous Sobolev space $\dot{W}^{1,p}(\RN)$  ($1<p<N$)  and it is described as follows: 
for any bounded sequence $(u_n)$ in $\dot{W}^{1,p}(\RN)$, 
there exist a family of functions $(\phi^l)$ in $\dot{W}^{1,p}(\RN)$, 
a family of real numbers $(\dsl{j}{l}{n})$ and a family of 
vectors $(\dsl{y}{l}{n})$ in $\RN$ such that, on a renumbered subsequence, 
\begin{align}\notag
&2^{-\dsl{j}{l}{n}\frac{N}{\p}} 
u_n \l( 2^{-\dsl{j}{l}{n}} \cdot + \dsl{y}{l}{n} \r) \to \phi^l\quad  \mbox{weakly in}\  \dot{W}^{1,p}(\RN) \\
&u_n=\sum_{l=1}^\infty 2^{\dsl{j}{l}{n}\frac{N}{\p}} 
\phi^l\l( 2^{\dsl{j}{l}{n}} (\cdot-\dsl{y}{l}{n} )\r)+r_n,
\end{align}
with  
\begin{equation}\notag
 \lim_{n\toinfty} \|r_n\|_{L^{p^* \!, \, q}(\RN)} =0, \qquad q>p,
\end{equation}
where $\p=pN/(N-p)$ and $L^{\p \!, \, q}(\RN)$ denotes 
the Lorentz space.
Moreover, (i) the asymptotic mutual orthogonality condition holds in the sense that 
for $k \neq l$, 
\begin{equation*}
|\dsl{j}{l}{n}-\dsl{j}{k}{n}| +
2^{\dsl{j}{k}{n}} |\dsl{y}{l}{n}-\dsl{y}{k}{n}| \toinfty
\quad \mbox{as} \  n\toinfty, 
\end{equation*}
and (ii) the decomposition of the Sobolev-energy (the so-to-speak energy estimate) is also gained:
\begin{align*}\eqntag\label{202202090010}
\limsup_{n\toinfty} \norm{u_n}^p_{\Wdrn} \ge 
\sum_{l=0}^\infty \norm{\bbl{\phi}{l}}^p_{\Wdrn}. 
\end{align*}
Those energy estimates will be well used in applications to PDEs or Calculus of Variations. For bounded Palais-Smale sequences to some semilinear elliptic functionals as used in Struwe's paper~\cite{Struwe}, each (nontrivial) critical point (that is expected to be a solution to the corresponding elliptic equations) has a priori least energy bounds, and hence, the above energy estimates implies that the number of nontrivial profiles must be finite. From this and the fact that each profile is a weak limit of suitably dislocated $u_n$, Struwe's results are re-established by means of the profile decomposition theory.

Solimini's theory has been
extended to more general and various settings, i.e., 
abstract Hilbert spaces, Banach spaces (including fractional Sobolev spaces, Besov spaces and Triebel-Lizorkin spaces), metric spaces and Sobolev spaces on Riemannian manifolds mainly by Solimini and C.~Tintarev (see also \cite{AdimurthiTintarev,DevillanovaSolimini,DevillanovaSoliminiTintarev,PalatucciPisante,SandeepShindlerTintarev,SchindlerTintarev,SoliminiTintarev,T-01,T-K} and references therein).

\paragraph{Double-suffix profile decomposition}
On the other hand, 
P.~G\'erard~\cite{Gerard} provided the profile decomposition in the homogeneous Sobolev space $\dot{H}^{s, 2}(\RN)$ ($s>0$) 
and S.~Jaffard~\cite{Jaffard} extended it to the homogeneous Sobolev space 
$\dot{H}^{s, p}(\RN)$ ($s>0,\  1<p<\infty$)
by the use of wavelet basis. 
Here, a brief review of Jaffard's results reads as follows: 
for any bounded sequence $(u_n)$ in $\dot{H}^{s,p}(\RN)$, 
there exist a family of functions $(\phi^l)$ in $\dot{H}^{s,p}(\RN)$, 
a family of real numbers $(\dsl{j}{l}{n})$ and a family of 
vectors $(\dsl{y}{l}{n})$ in $\RN$ such that, on a renumbered subsequence, 
\begin{equation}\notag
u_n=\sum_{l=1}^L 2^{\dsl{j}{l}{n}\frac{N-sp}{p}} 
\phi^l\l( 2^{\dsl{j}{l}{n}} (\cdot-\dsl{y}{l}{n} )\r)+r^L_n,
\end{equation}
with  
\begin{equation}\notag
\lim_{L\to \infty} \varlimsup_{n\toinfty} \|r^L_n\|_{L^q(\RN)} =0,
\end{equation}
where $q$ satisfies $s=N/p-N/q$.
Moreover, (i) the asymptotic mutual orthogonality condition holds in the sense that 
for $k \neq l$, 
\begin{equation*}
|\dsl{j}{l}{n}-\dsl{j}{k}{n}| +
2^{\dsl{j}{k}{n}} |\dsl{y}{l}{n}-\dsl{y}{k}{n}| \toinfty \quad 
\text{as}\ n\toinfty, 
\end{equation*}
and (ii) a decomposition in the Sobolev-energy is also gained:
\begin{align*}\eqntag\label{202202090020}
\limsup_{n\toinfty}\norm{u_n}^p_{\Hdsp}
\ge \sum_{l=0}^\infty \norm{\bbl{\phi}{l}}^p_{\Hdsp}. 
\end{align*}
The theories of G\'erard and Jaffard are extended by 
H.~Bahouri, A.~Cohen and G.~Koch~\cite{B-C-K} 
to more general  function spaces 
such as Besov spaces and Triebel-Lizorkin spaces by the use of wavelet basis.

The above decomposition theorem of Jaffard's type  is proved based on wavelet basis, 
so this proposition does not tell us relations 
between $u_n$ and each profile $\phi^l$,  
while  the Solimini-Tintarev-type decomposition 
states that each profile is given as the weak limit of 
reversely dislocated $u_n$. 
So the Solimini-Tintarev-type results seem to be extensions of Lions' ideas.

\paragraph{Relations between two-types of profile decomposition}
Devillanova-Solimini~\cite{DevillanovaSolimini} clearly exhibited the relations between the above two-types of profile decomposition. 
The double-suffix profile decomposition can be regarded as a \emph{weaker} form of the single-suffix one, but, as Devillanova-Solimini also states, dislocations $(\dsl{y}{l}{n},\dsl{j}{l}{n})$ in profile decomposition are not the same choice between the above two profile decompositions. 
In order to reach the \emph{infinite bulk} of profiles as in the first one, one should perform the so-called \emph{routing procedure} with subtle arguments as is discussed  in~\cite[Section~5]{DevillanovaSolimini}. 
Due to this routing procedure together with reselections of dislocations, Devillanova-Solimini suggested that double-suffix profile decompositions are converted to single-suffix ones. 

On the other hand, let us consider an \emph{inverse problem}:
given countably many profiles and dislocations, can we find a bounded sequence $(u_n)$ whose profile elements coincide with the given ones?
It is natural to consider $(u_n)$ defined by a single-suffix or double-suffix profile decomposition, but the single-suffix one solves the inverse problem as long as the dislocations are ``\emph{routed}'' as is observed in~\cite{DevillanovaSolimini}. 
However, the double-suffix one is always well-posed and the inverse problem is always solved by the double-suffix one.

\paragraph{Hybrid-type profile decomposition}
Regarding such subtle arguments on routing-procedure, 
N.~Ikoma and M.~Ishiwata discussed, in their personal  notes~\cite{Ikoma,Ishiwata} (in Japanese), a weaker formulation of Solimini-Tintarev-type profile decomposition theorems without the routing-procedure, and they obtained a double-suffix profile decomposition theorem in Hilbert spaces based on the \emph{weak-topological} arguments established by Solimini, Tintarev et al. 
This is a so-to-speak hybrid-type profile decomposition of Solimini-Tintarev-type one and G\'erard-Jaffard-type one. 
Meanwhile, T.~Tao~\cite[Theorem~4.5.3]{tao blog-book}
(in 2010 according to his blog) 
constructed a profile decomposition theorem 
in  Hilbert spaces, which is quite  
similar to those of Ikoma and Ishiwata. 
Such a profile decomposition theorem in general Hilbert spaces will be exhibited in \cref{theorem;general profile decomp.} in \cref{section;ProDecoHilbert} with its complete proof by the author for the sake of the reader's convenience.

We shall exhibit here
what the hybrid-type profile decomposition is like by applying \cref{theorem;general profile decomp.} 
to the case of the homogeneous Sobolev space $\Hdrn$, so that one sees the meaning of ``hybrid'' clearly.

\begin{corollary}[cf. \cref{theorem;general profile decomp.}]\label{cor11}
Let $(u_n)$ be a bounded sequence in $\dot{H}^1(\RN)$.
Then there exist a subsequence $(N(n)) \subset \NN \ (n \in \NN)$, 
$\bbl{w}{l} \in \dot{H}^1(\RN) \ (l \in\NN)$, 
$\dsl{y}{l}{N(n)} \in \RN \ (0 \le l \le n)$ and 
$\dsl{j}{l}{N(n)} \in \Z \ (0 \le l \le n)$ 
such that  the following hold.
\begin{enumerate}
\setlength{\itemsep}{0mm}

\item $\dsl{y}{0}{N(n)}=0,\  \dsl{j}{0}{N(n)}=0 \ (n \ge 0)$.
\item $|\dsl{j}{l}{N(n)}-\dsl{j}{k}{N(n)}| +
2^{\dsl{j}{l}{N(n)}} |\dsl{y}{l}{N(n)}-\dsl{y}{k}{N(n)}| \toinfty$
as $n \toinfty$ whenever $l\neq k$.
\item $2^{-\dsl{j}{l}{N(n)}\frac{N-2}{2}} 
u_{N(n)} \l( 2^{-\dsl{j}{l}{N(n)}} \cdot + \dsl{y}{l}{N(n)} \r) \to \bbl{w}{l}$ weakly in $\dot{H}^1(\RN)$.
\item Set the $L$-th residual term by 
\begin{equation*}
r^L_{N(n)}\coloneqq u_{N(n)}
-\sum_{l=0}^L 2^{\dsl{j}{l}{N(n)}\frac{N-2}{2}} 
\bbl{w}{l} \l( 2^{\dsl{j}{l}{N(n)}} (\cdot-\dsl{y}{l}{N(n)} )\r), \quad n \ge L \ge 0,  
\end{equation*}
and then, it holds that 
\begin{equation*}
2^{-\dsl{j}{k}{N(n)}\frac{N-2}{2}} 
r^L_{N(n)} \l( 2^{-\dsl{j}{k}{N(n)}} \cdot + \dsl{y}{k}{N(n)} \r)
\to 
\begin{cases}
0  &  \mbox{if} \ k=0,\ldots,L, \\ 
\bbl{w}{k}  &  \mbox{if}\ k \ge L+1,
\end{cases}
\end{equation*}
weakly in $\Hdrn$ as $n\toinfty$. 
\end{enumerate} 
Furthermore, it holds that 
\begin{equation}\label{202109100010}
\lim_{L\to \infty}\sup_{\phi\in U}\varlimsup_{n\toinfty}\sup_{y\in \RN \!, \,  j\in\Z}
\qty| \qty( \phi, 2^{-j\frac{N-2}{2}} 
r^L_{N(n)} ( 2^{-j} \cdot + y ) )_{\dot{H}(\RN)}|=0,
\end{equation}
where $U \coloneqq B_{\dot{H}^1(\RN)}(1)$,  which is 
the unit ball of $\dot{H}^1(\RN)$, that 
\begin{equation}\label{202109100020}
\varlimsup_{n\toinfty}\|u_{N(n)} \|_{\dot{H}^1(\RN)}^2
    =\sum_{l=0}^\infty \|\bbl{w}{l}\|_{\dot{H}^1(\RN)}^2 
    + \lim_{L\to \infty}\varlimsup_{n\toinfty}\|r^L_{N(n)} \|_{\dot{H}^1(\RN)}^2,
\end{equation}
and that 
\begin{equation}\label{202109100030}
\lim_{L\to\infty}\varlimsup_{n\to\infty}\|r^L_{N(n)}\|_{L^{\2}(\RN)}=0.
\end{equation}
\end{corollary}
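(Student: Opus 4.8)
The plan is to specialize the abstract profile decomposition of \cref{theorem;general profile decomp.} to the Hilbert space $\dot{H}^1(\RN)$ with inner product $(u,v)_{\dot{H}^1(\RN)}=\int_{\RN}\nabla u\cdot\nabla v\dx$, acted upon by the scaling--translation group. Concretely, for $(j,y)\in\Z\times\RN$ I set $g_{j,y}u\coloneqq 2^{j\frac{N-2}{2}}u(2^{j}(\cdot-y))$; a single change of variables shows that each $g_{j,y}$ is unitary on $\dot{H}^1(\RN)$ and that $g_{j,y}^{-1}v=2^{-j\frac{N-2}{2}}v(2^{-j}\cdot+y)$, which is exactly the reverse dislocation appearing in (iii). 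First I would verify that $G\coloneqq\{g_{j,y}\}$ is a group of isometries fitting the structural hypotheses of \cref{theorem;general profile decomp.}, and then translate the abstract asymptotic orthogonality of two dislocation sequences, namely $(g_{j^{(l)}_{N(n)},y^{(l)}_{N(n)}})^{-1}g_{j^{(k)}_{N(n)},y^{(k)}_{N(n)}}\rightharpoonup 0$ in the weak operator sense, into the concrete divergence condition (ii), that is $|\dsl{j}{l}{N(n)}-\dsl{j}{k}{N(n)}|+2^{\dsl{j}{l}{N(n)}}|\dsl{y}{l}{N(n)}-\dsl{y}{k}{N(n)}|\toinfty$. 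This equivalence is the routine, if slightly delicate, group-theoretic computation.

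Once this dictionary is in place, the subsequence $(N(n))$, the profiles $\bbl{w}{l}$, the dislocation parameters, and assertions (i)--(iv) are read off directly from \cref{theorem;general profile decomp.}; the normalization (i) comes from taking $\bbl{w}{0}$ to be the weak limit of $u_{N(n)}$ itself at the identity dislocation. Likewise, the $G$-weak vanishing \eqref{202109100010} and the Bessel/Pythagorean identity \eqref{202109100020} are the verbatim conclusions of the abstract theorem transported through the unitarity of $G$; no work beyond relabelling is needed for these.

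The one assertion that is not formal is \eqref{202109100030}, the vanishing of the residual in the critical space $L^{\2}(\RN)$. My plan is to invoke the $G$-complete continuity (cocompactness) of the embedding $\dot{H}^1(\RN)\hookrightarrow L^{\2}(\RN)$ with respect to the scaling--translation group: any $\dot{H}^1$-bounded sequence that is $G$-weakly null converges to $0$ in $L^{\2}$. Granting this, \eqref{202109100010}---which expresses that $r^L_{N(n)}$ becomes asymptotically $G$-weakly null as $L\to\infty$, uniformly over the unit ball $U$---feeds straight into the cocompactness statement and yields $\lim_{L\to\infty}\varlimsup_{n\to\infty}\|r^L_{N(n)}\|_{L^{\2}(\RN)}=0$ after a standard diagonal extraction in $L$ and $n$.

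I expect the genuine obstacle to be establishing this cocompactness of $\dot{H}^1\hookrightarrow L^{\2}$, since it lies outside the soft Hilbert-space machinery and requires real harmonic analysis. The classical route is a refined Sobolev inequality of Gérard--Meyer--Oru / Bahouri--Gérard type, bounding $\|u\|_{L^{\2}(\RN)}$ by $\|u\|_{\dot{H}^1(\RN)}^{\theta}$ times a negative-regularity Besov norm $\|u\|_{\dot{B}^{-\sigma}_{\infty,\infty}(\RN)}^{1-\theta}$, the latter being comparable to the $G$-weak quantity controlled in \eqref{202109100010}; alternatively one argues by a direct concentration/blow-up contradiction. Either way, reducing \eqref{202109100030} to the abstract $G$-weak vanishing \eqref{202109100010} and supplying the cocompactness of the critical embedding is the crux of the matter, while everything else follows formally from \cref{theorem;general profile decomp.}.
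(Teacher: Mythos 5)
Your proposal is correct and follows essentially the same route the paper intends: specialize \cref{theorem;general profile decomp.} to $\dot{H}^1(\RN)$ with the unitary scaling--translation group to obtain (i)--(iv), \eqref{202109100010} and \eqref{202109100020}, and then derive \eqref{202109100030} from the exactness condition via the $G$-complete continuity of $\dot{H}^1(\RN)\hookrightarrow L^{\2}(\RN)$ (your diagonal-extraction step is precisely \cref{theorem;weak G-comp conti}, whose boundedness hypothesis is supplied by \eqref{202109100020}). The cocompactness of the critical embedding is likewise taken from the literature in the paper's treatment, so identifying it as the one non-formal ingredient is exactly right.
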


There have been some attempts to construct profile decomposition theorems for Borel measures (see, e.g.,~\cite{Maris, tao blog-book}, and Maris obtained a theorem of profile decomposition for the Sobolev space $W^{1,p}(\RN)$ as a corollary of profile decomposition for Borel measures. However, we shall improve his results for Sobolev spaces in terms of the decomposition of the Sobolev norms. Meanwhile, note that our results below and Maris' one coincide with each other when $p=2$.

\medskip
According to Tao~\cite{tao blog-book}, double-suffix profile decomposition theorems are ``slightly more convenient'' for applications to dispersive and wave equations. Tao refers to~\cite{Killip} for such applications. 
However, the hybrid-type profile decomposition theorems, expanded in the present paper and~\cite{Okumura3}, will be more convenient in applications owing to the fact that each profile is obtained as a weak limit of suitably dislocated $u_n$. 
There are many attempts to apply double-suffix profile decomposition theorems to PDEs, and for details, we refer the reader to the following literature and references therein: 
\cite{GuoLiuWang17}
for nonlinear elliptic equations; 
\cite{IshiwataRIMS1,IshiwataRIMS2}
for nonlinear parabolic equations (see also~\cite{ChikamiIkedaTaniguchi}); 
\cite{BahouriGerard}
for nonlinear wave equations; 
\cite{KenigMerle,Keraani,Keraani2}
for nonlinear Schr\"odinger equations;
\cite{GallagherKochPlanchon,KenigKoch}
for Navier-Stokes equations.
Applications of Solimini-Tintarev-type profile decomposition theorems to PDEs and Calculus of Variations are shown in, e.g.,~\cite{B-C-K,PalatucciPisante,T-01,T-K} and references therein.

\subsection{Aim of the paper}
Our aim in the present paper is to extend the double-suffix profile decomposition theorem in general Hilbert spaces (\cref{theorem;general profile decomp.} in \cref{section;ProDecoHilbert}) to non-Hilbertian Sobolev spaces of inhomogeneous type, and it will also clarify the relations between a bounded sequence and its profiles via the weak convergence and an isometric group action. Those relations do not appear in the results of G\'erard and Jaffard.
Moreover, our results will be proved in a straightforward and simple manner without relying on the wavelet basis, so that those theorems might be readily extended to other function spaces with isometric group actions.
Such theorems may be well applied to PDEs and Calculus of Variations.
To this end, we shall firstly establish an abstract theory of profile decomposition with a double suffix residual term in general Banach spaces without giving detailed quantitative information 
such as~\eqref{202109100010}--\eqref{202109100030} in \cref{cor11}. As a corollary, the profile decomposition theorem with a double suffix residual term in general Hilbert spaces will be obtained. This abstract step also seems to be of independent interest as well. By the use of these abstract settings and from direct calculations of energies, we reach a profile decomposition theorem in Sobolev spaces. It is noteworthy that our energy estimates in the Sobolev norm is slightly sharper than those in other authors' papers, except for the Hilbertian case, i.e., $p=2$. 

Furthermore, we shall derive important results of decomposition of integral functionals, also regarded as the \emph{iterated Brezis-Lieb lemma}, subordinated to the profile decomposition in Sobolev spaces. 
From the viewpoints of profile decomposition, 
the classical Brezis-Lieb lemma roughly focuses 
on just two parts of $(u_n)$: 
the weak limit $u_0$ and all the other profiles $u_n-u_0$. 
On the contrary, our results of decomposition of integral functionals (the iterated Brezis-Lieb lemma) typically implies the decomposition of 
$\lim_{n\toinfty}\|u_n\|_{L^q(\RN)}^q$ into the sum of those of all profiles.
We shall also develop those decompositions of integral functionals in other Sobolev spaces where the residual term of profile decomposition is vanishing.

\smallskip
The author also wrote a paper~\cite{Okumura3} on the double-suffix profile decomposition in ``homogeneous" Sobolev spaces and decomposition of integral functionals in the same spirit as in this paper.

\subsection{Structure of the paper} 
The rest part of \cref{section;intro} contains the notation and settings employed throughout the paper. 
In \cref{section;AbsProDeco}, we shall discuss an abstract theory of profile decomposition in general reflexive Banach spaces and the behavior of the residual term. The results in the section will be a fundamental theorem on which profile decomposition in specific Banach spaces will be based. 
Also, a theorem of profile decomposition in general Hilbert spaces is exhibited. 
\cref{section;ProDecoInhomSobolev} is devoted to the profile decomposition of bounded sequences in inhomogeneous Sobolev spaces, which is our main interest in this paper. 
\cref{section;BLcubcrit} is concerned with decomposition of integral functionals (the iterated Brezis-Lieb lemma) subordinated to profile decomposition in inhomogeneous Sobolev spaces. 
Throughout the paper, results and proofs will be given in the same sections.

\subsection[Notation and settings]{Notation and functional analytic settings: $G$-weak convergence, dislocation space, $G$-complete continuity} 
As is reviewed in \cref{subsection;prologue}, 
analysis of the defect of compactness needs to 
chase dislocated profiles. Here several meanings of \emph{dislocation} appear in the context of profile decomposition: 
for instance, translations for  
inhomogeneous Sobolev spaces; translations and dilations for   
homogeneous Sobolev spaces. 
So it is necessary to establish a structure for chasing dislocated profiles, i.e.,  \emph{dislocation spaces} and \emph{$G$-weak convergence}  introduced in~\cite{T-K}; their attempts to set up suitable languages and structures for profile decomposition are of consideration so that we shall rely on them to establish our theorems.

First of all, we begin with 

\medskip
\noindent
{\bf Notation.}
Throughout the paper, we often use the following notation. 
\begin{enumerate}
\setlength{\itemsep}{0mm}

    \item We write $(n_j) \preceq (n)$ when the left-hand side is a subsequence of the right-hand side. 
    \item The set of integers greater than or equal to $l \in \Z$ is denoted by $\Z_{\ge l}$.
    \item For $\Lambda \in \N \cup \{0,+\infty\}$, the set of integers at least zero and at most  $\Lambda$ is denoted by $\NN^{<\Lambda+1}$, where we assume that $\Lambda +1 = +\infty$ when $\Lambda = +\infty$, i.e., 
    $\NN^{<\Lambda+1} = \{0, \ldots, \Lambda  \}$ if $\Lambda <+\infty$ and 
    $\NN^{<\Lambda+1} = \NN$ if $\Lambda = +\infty$. 
    Moreover, for real numbers $(a_L)_L$, we often write  $\lim_{L\to\Lambda} a_L = a_\Lambda$ whenever $\Lambda<\infty$. 
    \item We denote by $C$ a non-negative constant, which does not
depend on the elements of the corresponding space or set and may vary from line to line.
    \item For an exponent $p \in [1,\infty]$,  we denote by $p'$ the  H\"{o}lder conjugate exponent of $p$: $p'=p/(p-1)$ if $p\in]1,\infty[;$ $p'=\infty$ if $p=1;$ $p'=1$ if $p=\infty$. 
    \item For a normed space $X$,  we denote its norm by $\|\cdot\|$ or $\|\cdot\|_{X}$, 
    denote its dual space by $X^*$ and 
    denote the duality pairing between $X,X^*$ by $\la \cdot,\cdot\ra $ or $\la\cdot,\cdot\ra_X$.
    \item For a pre-Hilbert space $H$,  we denote its inner product by $(\cdot,\cdot)$ or 
    $(\cdot,\cdot)_H$.
    \item For a normed space $X$, we denote by $B_X(r)$ 
    the closed ball of radius $r>0$ centered at the origin, that is, 
    $B_X(r) = \qty{ u \in X; \ \|u\|_X \le r }$. 
    Meanwhile, a closed ball in $\RN$ of radius $r>0$ centered at $p\in \RN$ is denoted by $B(p,r)$, that is, 
    $B(p,r)\coloneqq \qty{x\in \RN; \ |x-p|\le r}$. 
    \item For a normed space $X$,  we denote by $\mathcal{B}(X)$ 
    the normed space of all bounded linear operators on $X$ equipped with the operator norm. 
    \item For a Banach space $X$ and for a
    bounded operator $T\in \mathcal{B}(X)$,  the adjoint operator
    of $T$ is denoted by $T^*$.
\end{enumerate}

From now on, we introduce some definitions and notions 
used in the framework of profile decomposition.
We recall two notions of convergence of bounded linear operators 
on a Banach space.

\begin{definition}[Convergence of operators]
Let $X$ be a Banach space.
For a sequence of bounded operators $(A_n)$ in $\mathcal{B}(X)$, 
$(A_n)$ is said to converge to $A\in \mathcal{B}(X)$
\emph{strongly} or 
\emph{operator-strongly}  if 
$A_n u \to Au$ strongly in $X$ 
for all $u\in X$ as $n\toinfty$,  and in this case, 
the strong convergence of operators is denoted by 
$A_n \to A$.
Also, $(A_n)$ is said to converge to $A\in \mathcal{B}(X)$
\emph{weakly} or \emph{operator-weakly} 
if $A_n u \to Au$ weakly in $X$ 
for all $u\in X$ as $n\toinfty$,  and in this case, 
the weak convergence of operators is denoted by 
$A_n \rightharpoonup A$. 
\end{definition}

In the framework of profile decomposition, a new notion of 
convergence named \emph{$G$-weak convergence} 
(an extension of the usual weak convergence) 
is established. It is intended that 
$G$-weak convergence to zero indicates that 
there are no longer nontrivial profiles. 
For more details, see~\cite{T-01}.

\begin{definition}[$G$-weak convergence]
\label{definition;G-weak convergence}
Let $X$ be a Banach space and let $G\subset \mathcal{B}(X)$
be a group of bijective isometries on $X$ (under operator composition).
A sequence $(u_n)$ in $X$ is said to be 
\emph{$G$-weakly convergent} to $u\in X$ provided that 
\begin{equation*}
\lim_{n\toinfty} \sup_{g\in G} \qty|\la \phi, g^{-1}(u_n-u) \ra |=0 
\mbox{\quad for all \ } \phi\in X^*,
\end{equation*}
or equivalently, provided that for any sequence $(g_n)$ in $G$,  
$g_n^{-1}(u_n-u) \to 0$ weakly in $X$ as $n\toinfty$.
\end{definition}

\begin{remark} \label{remark of G-weak conv}
\rm
\begin{enumerate}
    \item 
    Note that if $X$ is a Hilbert space, then linear bijective isometries 
    are also called unitary operators.   
    \item One can easily see that $G$-weak convergence always implies weak convergence by definition. 
    If $G=\{{\rm Id}_X\}$,  then the $G$-weak convergence is nothing 
    but the ordinary weak convergence in $X$. 
\end{enumerate}
\end{remark}

We then introduce suitable spaces for profile decomposition named \emph{dislocation spaces}. 
A general definition of them is exhibited in~\cite{T-01,T-K}, but for the sake of convenience for construction of a theory for Sobolev space cases, 
we here use a limited definition of dislocation spaces.

\begin{definition}[Dislocation space]\label{def;DislSp}
Let $X$ be a Banach space,  
and let $G \subset \mathcal{B}(X)$ be a \emph{group of bijective isometries} on $X$ (under operator composition). 
Then $G$ is called a \emph{dislocation group} if the following conditions are satisfied: 
\begin{enumerate}
\setlength{\itemsep}{0mm}

    \item For any sequence $(g_n)$ in $G$
    with $g_n\not\rightharpoonup 0$,  
    there exists a subsequence $(n_j) \preceq (n)$
    such that   
    $(g_{n_j})$ converges operator-strongly.
    \item For any sequence $(g_n)$ in $G$ with $g_n \not\rightharpoonup 0$ and for any sequence $(u_n)$ in $X$ with $u_n \to 0$ weakly, there exists a subsequence $(n_j) \preceq (n)$ such that $g_{n_j} u_{n_j} \to 0$ weakly in $X$. 
\end{enumerate}      
In that case, the pair $(X,G)$ is called a \emph{dislocation space}.
\end{definition}

\begin{lemma}\label{DslGrpRem}
Let $G$ be a group of bijective isometries on $X$ (under operator composition) and satisfy the following: 
for any sequence $(g_n)$ in $G$ with $g_n\not\rightharpoonup 0$, the adjoint $(g_n^*)$ has an operator-strongly convergent subsequence in $\mathcal{B}(X^*)$. 
Then $G$ satisfies the assertion~(ii) of \cref{def;DislSp}.
\end{lemma}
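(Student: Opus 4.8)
The plan is to extract the required subsequence directly from the hypothesis and then run a standard weak--strong pairing argument. First I would fix a sequence $(g_n)$ in $G$ with $g_n\not\rightharpoonup 0$ together with a sequence $(u_n)$ in $X$ satisfying $u_n \to 0$ weakly. By hypothesis the adjoint sequence $(g_n^*)$ admits an operator-strongly convergent subsequence in $\mathcal{B}(X^*)$; denote it by $(g_{n_j}^*)$ and its limit by $h\in\mathcal{B}(X^*)$, so that $g_{n_j}^*\phi \to h\phi$ strongly in $X^*$ for every $\phi\in X^*$. This $(n_j)\preceq(n)$ will be the subsequence demanded in assertion~(ii). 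Since $(u_n)$ converges weakly it is bounded by the uniform boundedness principle, so I may set $M\coloneqq \sup_j \|u_{n_j}\|_X<\infty$.

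Next I would verify $g_{n_j}u_{n_j}\to 0$ weakly in $X$. Fix an arbitrary $\phi\in X^*$. Using the definition of the adjoint, I write
\[
\la \phi, g_{n_j}u_{n_j}\ra
= \la g_{n_j}^*\phi, u_{n_j}\ra
= \la g_{n_j}^*\phi - h\phi, u_{n_j}\ra + \la h\phi, u_{n_j}\ra .
\]
The purpose of this splitting is that the test functional $g_{n_j}^*\phi$ varies with $j$, so one cannot test weak convergence of $(u_{n_j})$ against it directly; the decomposition isolates the \emph{fixed} functional $h\phi$, against which weak convergence does apply.

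For the first term I would estimate $|\la g_{n_j}^*\phi - h\phi, u_{n_j}\ra| \le \|g_{n_j}^*\phi - h\phi\|_{X^*}\, M \to 0$, which vanishes thanks to the strong convergence $g_{n_j}^*\phi \to h\phi$ combined with the uniform bound $M$. For the second term, $h\phi$ is a fixed element of $X^*$ and $u_{n_j}\to 0$ weakly, whence $\la h\phi, u_{n_j}\ra \to 0$. Adding the two, $\la \phi, g_{n_j}u_{n_j}\ra \to 0$; as $\phi\in X^*$ was arbitrary, $g_{n_j}u_{n_j}\to 0$ weakly in $X$, which is exactly assertion~(ii) of \cref{def;DislSp}.

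There is no deep obstacle here: the content is entirely the weak--strong duality pairing together with boundedness of weakly convergent sequences. The only step I would flag is the splitting displayed above, since the functional $g_{n_j}^*\phi$ moves with $j$. It is precisely at this point that the hypothesis is used in an essential way: one genuinely needs the \emph{operator-strong} convergence of the adjoints (not merely their boundedness or weak convergence) to guarantee $\|g_{n_j}^*\phi - h\phi\|_{X^*}\to 0$.
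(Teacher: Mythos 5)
Your proposal is correct and follows essentially the same route as the paper: pass to the adjoint via $\la \phi, g_{n_j}u_{n_j}\ra = \la g_{n_j}^*\phi, u_{n_j}\ra$ on the subsequence where $(g_{n_j}^*)$ converges operator-strongly, then conclude by the weak--strong pairing. The paper compresses this into a single ``$=o(1)$''; your explicit splitting into $\la g_{n_j}^*\phi - h\phi, u_{n_j}\ra + \la h\phi, u_{n_j}\ra$ is exactly the argument it leaves implicit.
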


\begin{proof}
Assume that up to a subsequence, $(g_n^*)$ converges operator-strongly.
Then for any $u_{n} \in X$ with $u_{n}\to 0$ weakly in $X$ and for any $\phi \in X^*$, one sees that 
\begin{equation*}
    \la  \phi, g_{n} u_{n} \ra  = \la  g_{n}^* \phi, u_{n}\ra  = o(1)
    \quad (n\to\infty),
\end{equation*}
which completes the proof.
\end{proof}

From the above definition, one can immediately show the following corollary. 

\begin{corollary}\label{cor;dsl sp equiv norm}
Let $X$ be a linear space equipped with two different norms $\|\cdot\|_1$ and $ \|\cdot\|_2$ which are equivalent and complete, and let $G\subset\mathcal{B}(X)$ be a group (under operator composition) of bijective isometries with respect to both $\|\cdot\|_1$ and $\|\cdot\|_2$. 
If $((X, \|\cdot\|_1), G)$ is a dislocation space, then $((X, \|\cdot\|_2), G)$ is also a dislocation space.  
\end{corollary}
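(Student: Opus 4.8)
The plan is to reduce the whole statement to one observation: equivalent complete norms induce identical notions of strong convergence, of weak convergence, and therefore of operator-strong and operator-weak convergence. Once this is established, each clause of \cref{def;DislSp} transfers verbatim from $(X,\|\cdot\|_1)$ to $(X,\|\cdot\|_2)$, since those clauses are phrased entirely in terms of such convergences together with the standing requirement that $G$ be a group of bijective isometries (which, by hypothesis, holds for both norms).

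First I would record the relevant topological facts. Since $\|\cdot\|_1$ and $\|\cdot\|_2$ are equivalent, there are constants $c,C>0$ with $c\|x\|_1\le\|x\|_2\le C\|x\|_1$ for every $x\in X$, so the two norms induce the same topology; hence a sequence converges strongly for $\|\cdot\|_1$ if and only if it converges strongly for $\|\cdot\|_2$. Moreover a linear functional is $\|\cdot\|_1$-bounded if and only if it is $\|\cdot\|_2$-bounded, so $(X,\|\cdot\|_1)^*$ and $(X,\|\cdot\|_2)^*$ coincide as sets of functionals, whence weak convergence is likewise the same for both norms. From these two equivalences it follows immediately that operator-strong convergence (defined through strong convergence of $A_n u$) and operator-weak convergence (defined through weak convergence of $A_n u$) coincide for the two norms; in particular the condition $g_n\not\rightharpoonup 0$ is insensitive to the choice of norm. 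I would also note that, again by equivalence of the norms, an operator lies in $\mathcal{B}(X)$ for one norm exactly when it does for the other, so $G\subset\mathcal{B}(X)$ unambiguously, and by assumption each $g\in G$ is a bijective isometry for $\|\cdot\|_2$ as well, so $G$ is a legitimate group of isometries for the second norm.

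Finally I would verify the two clauses of \cref{def;DislSp} for $((X,\|\cdot\|_2),G)$. Let $(g_n)$ be a sequence in $G$ with $g_n\not\rightharpoonup 0$; as noted, this holds in the $\|\cdot\|_2$ sense precisely when it holds in the $\|\cdot\|_1$ sense. Clause (i) for $((X,\|\cdot\|_1),G)$ then supplies a subsequence $(g_{n_j})$ converging operator-strongly for $\|\cdot\|_1$, hence for $\|\cdot\|_2$, which is clause (i) for $\|\cdot\|_2$. Likewise, for such $(g_n)$ and any $(u_n)$ with $u_n\to 0$ weakly for $\|\cdot\|_2$ (equivalently for $\|\cdot\|_1$), clause (ii) for $((X,\|\cdot\|_1),G)$ produces a subsequence with $g_{n_j}u_{n_j}\to 0$ weakly for $\|\cdot\|_1$, hence for $\|\cdot\|_2$, which is clause (ii) for $\|\cdot\|_2$. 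There is no genuine estimate to carry out, so I do not expect a real obstacle; the only point demanding care — and the one that makes the entire transfer work — is the bookkeeping that the dual spaces genuinely coincide, so that weak convergence is truly norm-independent among equivalent norms.
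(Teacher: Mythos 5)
Your proof is correct and is exactly the argument the paper has in mind: the paper states this corollary as an immediate consequence of the definition without writing out a proof, and your verification — that equivalent norms share the same strong and weak convergences (hence the same operator-strong and operator-weak convergences), so both clauses of \cref{def;DislSp} transfer verbatim — is the intended reasoning.
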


In applications of profile decomposition, dislocation spaces are often function spaces embedded into other function spaces and those embeddings have a property of $G$-complete continuity defined as follows.

\begin{definition}[$G$-complete continuity] \label{definition;G-compl conti} 
Let $(X,G)$ be a dislocation space and let $Y$ be a 
normed space. A continuous linear operator $T:X \to Y$
is said to be \emph{$G$-completely continuous} if 
for any sequence $(u_n)$ in X, 
$Tu_n \to Tu$ strongly in $Y$ whenever 
$u_n \to u$ $G$-weakly in $X$.
\end{definition}

\begin{remark}
{\rm
The property of 
$G$-complete continuity is also called 
$G$-cocompactness~(see \cite{T-K} for instance).
}
\end{remark}

Examples of $G$-completely continuous embeddings will be introduced in \cref{lemma;G-complete continuity of Wmp} below. See also~\cite{T-01, T-K} for other $G$-completely continuous embeddings.

\section[Abstract theory of PD]{Abstract theory of profile decomposition}\label{section;AbsProDeco}
This section broadly consists of two significant results; one of which is an abstract theorem of profile decomposition and another concerns relations between profile decomposition and $G$-completely continuous embeddings.

\subsection{Fundamental theorem of profile decomposition}
We intend to show that in dislocation spaces, any bounded sequence has a profile decomposition (without energy estimates).
When applied to specific dislocation spaces such as Sobolev spaces, energy decompositions for Sobolev norms or Lebesgue norms will be obtained with the help of direct calculations of those norms.

\begin{theorem}\label{theorem;ProDeco in X}
Let $(X,\|\cdot\|_X)$ be a reflexive Banach space, let 
$(X, G)$ be a dislocation space, and let $(u_n)$ be a bounded sequence in $X$. 
Then there exist a number $\Lambda \in \N\cup\{0,+\infty\}$, a subsequence $(N(n)) \subset \NN \ (n \in \NN)$, 
profiles 
$\bbl{w}{l} \in X \ (l \in \NN^{<\Lambda+1})$, 
dislocations
$\dsl{g}{l}{N(n)} \in G \ (l \in \NN^{<\Lambda+1}, \ n \in \Z_{\ge l})$, 
and residual terms $r^L_{N(n)}\in X$ $(L\in\NN^{<\Lambda+1},\ n\in\Z_{\ge L})$ with the relation of a double-suffix profile decomposition 
\begin{align*}
u_{N(n)} = \sum_{l=0}^L \dsl{g}{l}{N(n)}\bbl{w}{l} + r^L_{N(n)}, \quad L\in\NN^{<\Lambda+1}, \ n\in \Z_{\ge L}, 
\end{align*}
such that the following hold.
\begin{enumerate}
\setlength{\itemsep}{0mm}
\setlength{\parskip}{0mm}

\item $\dsl{g}{0}{N(n)}={\rm Id}_X \ (n \ge 0), \quad \bbl{w}{l}\neq 0 \ (1 \le l \in \NN^{<\Lambda+1}).$
\item $\inv{\dsl{g}{l}{N(n)}}\dsl{g}{m}{N(n)} \to 0$ operator-weakly 
as $n \toinfty$ whenever $l\neq m \in \NN^{<\Lambda +1}.$
\item $\inv{\dsl{g}{l}{N(n)}}u_{N(n)} \to \bbl{w}{l}$ weakly in $X$ 
$(n\toinfty, \ l\in \NN^{<\Lambda +1}).$
\item For $k\in\NN^{<\Lambda+1}$, 
\begin{equation}
\label{eq;ProDeco in X 1}
\inv{\dsl{g}{k}{N(n)}} r^{L}_{N(n)} \to
\begin{cases}
0  &  \mbox{if}\  k=0,\ldots,L, \\ 
\bbl{w}{k}  & \mbox{if}\  k\ge L+1, 
\end{cases}
\end{equation}
weakly in $X$ as $n\toinfty$.
\end{enumerate} 
Furthermore, if either $\Lambda =\infty$ and $\|\bbl{w}{l}\|_X \to 0$ as $l\to \Lambda$, or else $\Lambda<\infty$, then 
\begin{equation}
\label{eq;Ishiwata condition X}
\lim_{L\to \Lambda}\sup_{\phi\in U}\varlimsup_{n\toinfty}\sup_{g\in G}
\qty| \la  \phi, g^{-1}r^L_{N(n)} \ra_X |=0,
\end{equation}
where $U \coloneqq B_{X^*}(1)$.   
\end{theorem}

\begin{remark}
\begin{itemize}

\item 
The above theorem gives \emph{qualitative} assertions of profile decomposition, and \emph{quantitative} one~\eqref{eq;Ishiwata condition X} is obtained under a further assumption that either $\Lambda =\infty$ and $\|\bbl{w}{l}\|_X \to 0$ as $l\to \Lambda$, or else $\Lambda<\infty$. 
However, this further assumption will be ensured in theorems of profile decomposition below in the present paper and in~\cite{Okumura3}, by virtue of the direct calculations for the decompositions in energy like~\eqref{202202090010} and~\eqref{202202090020}, which show that $\norm{\bbl{w}{l}}_X\to 0$ as $l\toinfty$ if $\Lambda=\infty$. 

\item 
As for the assumption ``either $\Lambda =\infty$ and $\|\bbl{w}{l}\|_X \to 0$ as $l\to \Lambda$, or else $\Lambda<\infty$'', 
Solimini-Tintarev~\cite{SoliminiTintarev} generally verified this further assumption, by reformulating the profile decomposition theory by means of the so-called ``$\Delta$-convergence''. 
They established a $\Delta$-convergence-version of the profile decomposition theory for uniformly convex and uniformly smooth Banach spaces, where the above further assumption always holds true, and they also showed that if the Banach space satisfies Opial's condition, the $\Delta$-limits coincide with the weak limits, so that the above further assumption are also true with respect to the weak-topological profile decomposition theory. 
It is noteworthy that Hilbert spaces, Besov spaces and Triebel-Lizorkin spaces (including Sobolev spaces) enjoys Opial's condition, so that our assumption is satisfied in those cases.

\end{itemize}
\end{remark}

We make the following notation. 

\begin{definition}
A bounded sequence $(u_n)$ in $X$ is said to have 
a profile decomposition on a subsequence $(N(n))$ with  
profile elements $(\bbl{w}{l}, \dsl{g}{l}{N(n)}, \Lambda) \in X \times G\times(\N\cup\{0,+\infty\})$ $( l \in  \NN^{<\Lambda +1}, \ n \in \Z_{\ge l} )$ if $\Lambda \in \N\cup \{0,+\infty\}$, $N(n) \in \NN$, $\bbl{w}{l} \in X$ and $\dsl{g}{l}{N(n)} \in G$ satisfy all conditions as in \cref{theorem;ProDeco in X}. In that case, $\bbl{w}{l}$ is called the \emph{$l$-th profile}, and $(\dsl{g}{l}{N(n)})$ is called the \emph{$l$-th dislocations}. Especially, the $0$-th profile $\bbl{w}{0}$ is called the \emph{principal profile}, which is given as the weak limit of $(u_{N(n)})$. 
Moreover, the profile decomposition of $(u_n)$ is said to be \emph{exact} if the condition~\eqref{eq;Ishiwata condition X} is satisfied, and thus, this condition is called the \emph{exactness condition} of profile decomposition. 
\end{definition}

\begin{remark} 
\rm 
\begin{enumerate}
\item The condition $\dsl{g}{l}{N(n)}\in G \ ( l \in \NN^{<\Lambda +1}, \ n \in \Z_{\ge l})$ means that for fixed $l \in \NN^{<\Lambda +1}$, the $l$-th dislocations $(\dsl{g}{l}{N(n)})$ are  well-defined provided $n \ge l$. On the other hand, $\dsl{g}{l}{N(n)}$ is not necessarily well-defined if $n<l$. Although this restriction might seem strange, it is attributed to the technical difficulty of the diagonal argument (see also the proof of \cref{theorem;ProDeco in X} below). 
By the way, one can redefine $\dsl{g}{l}{N(n)}=\mbox{Id}_X$ for $n<l$ for the sake of well-definedness, but that does not make any improvement. 

\item Regarding the assertion~(i), the principal profile $\bbl{w}{0}$ is possibly zero. This is because the principal profile is defined as the weak limit of $(u_{N(n)})$. 
\end{enumerate}
\end{remark} 

Once the number of nontrivial profiles $\Lambda$ turns out to be finite, one can readily get a simpler version of the above theorem.

\begin{corollary}
Suppose that the same assumptions as in \cref{theorem;ProDeco in X} are satisfied. 
In addition, assume that $\Lambda < +\infty$, i.e., 
the number of nontrivial profiles is finite. 
Then regarding the final residual term given by 
\begin{equation*}
r^\Lambda_{N(n)} = u_{N(n)} -\sum_{l=0}^\Lambda \dsl{g}{l}{N(n)} \bbl{w}{l}, 
    \s n \ge \Lambda, 
\end{equation*}
the relation~\eqref{eq;Ishiwata condition X} turns to 
\begin{equation}
\lim_{n\toinfty} \sup_{g \in G} 
\qty| \la  \phi, g^{-1} r^\Lambda_{N(n)} \ra_X |=0 
\s \mbox{for all}\s \phi \in U,  \label{residue G weak conv X}
\end{equation}
which means that 
the final residual term is $G$-weakly convergent to zero in $X$ (cf. \cref{definition;G-weak convergence}). 
\end{corollary}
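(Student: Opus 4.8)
The plan is to read the conclusion off directly from the exactness condition~\eqref{eq;Ishiwata condition X} of \cref{theorem;ProDeco in X}, exploiting the single extra hypothesis $\Lambda<\infty$. The first step is to observe that the outer limit $\lim_{L\to\Lambda}$ in~\eqref{eq;Ishiwata condition X} degenerates: by the convention recorded in the Notation (namely that $\lim_{L\to\Lambda}a_L=a_\Lambda$ whenever $\Lambda<\infty$), this limit is simply evaluation at $L=\Lambda$. Since moreover the assumption ``$\Lambda<\infty$'' is exactly one of the two alternatives under which~\eqref{eq;Ishiwata condition X} is asserted in \cref{theorem;ProDeco in X}, the exactness condition is available and reads
\[
\sup_{\phi\in U}\varlimsup_{n\toinfty}\sup_{g\in G}\left|\left(\phi, g^{-1}r^\Lambda_{N(n)}\right)\right|=0 .
\]

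The second step is an elementary nonnegativity argument. For each fixed $\phi\in U$ the inner quantity $\varlimsup_{n\toinfty}\sup_{g\in G}\left|\left(\phi, g^{-1}r^\Lambda_{N(n)}\right)\right|$ is nonnegative, and its supremum over $\phi\in U$ vanishes by the displayed identity; hence each such quantity must itself vanish, giving
\[
\varlimsup_{n\toinfty}\sup_{g\in G}\left|\left(\phi, g^{-1}r^\Lambda_{N(n)}\right)\right|=0
\qquad\text{for every }\phi\in U .
\]
Finally, for each $n$ the term $\sup_{g\in G}\left|\la\phi, g^{-1}r^\Lambda_{N(n)}\ra\right|$ is nonnegative, and a nonnegative real sequence whose upper limit is $0$ actually converges to $0$ (its lower limit is squeezed between $0$ and $0$). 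Therefore
\[
\lim_{n\toinfty}\sup_{g\in G}\left|\la\phi, g^{-1}r^\Lambda_{N(n)}\ra\right|=0
\qquad\text{for every }\phi\in U ,
\]
which is precisely~\eqref{residue G weak conv X}. By \cref{definition;G-weak convergence} this states exactly that $r^\Lambda_{N(n)}\to 0$ $G$-weakly in $X$.

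I do not anticipate any genuine obstacle in this argument: the corollary is in essence a reformulation of the exactness condition in the degenerate regime of finitely many profiles. The only two points needing a word of care are the collapse of the outer limit $\lim_{L\to\Lambda}$ to evaluation at $L=\Lambda$ (justified by the stated convention for $\Lambda<\infty$), and the passage from a vanishing supremum over $\phi\in U$ to pointwise vanishing in $\phi$ together with the upgrade from $\varlimsup$ to $\lim$, both of which rest only on nonnegativity of the quantities involved.
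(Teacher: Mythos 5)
Your proposal is correct and matches the paper's (implicit) treatment: the corollary is stated as an immediate specialization of the exactness condition~\eqref{eq;Ishiwata condition X}, which under $\Lambda<\infty$ collapses via the stated convention $\lim_{L\to\Lambda}a_L=a_\Lambda$ to the single identity at $L=\Lambda$, exactly as you argue. The subsequent passage from a vanishing supremum over $\phi\in U$ to pointwise vanishing, and from $\varlimsup$ to $\lim$ by nonnegativity, is the same elementary reasoning the paper relies on (and is already contained in Step~4 of the proof of \cref{theorem;ProDeco in X}, where the case $L=\Lambda<\infty$ is handled directly).
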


In the case of the above corollary, we see that 
the residual term strictly converges to zero $G$-weakly in $X$, which ensures that there remain no longer nontrivial profiles. Thus the conditions~\eqref{eq;Ishiwata condition X} and~\eqref{residue G weak conv X} may indicate that the residual term is shrinking as we subtract iteratively more and more nontrivial dislocated profiles, and we can finally take up all nontrivial profiles of $(u_n)$. Therefore, roughly speaking, the conditions~\eqref{eq;Ishiwata condition X} and~\eqref{residue G weak conv X} imply the completion of performing the profile decomposition, and so this is why we shall call the condition~\eqref{eq;Ishiwata condition X} the exactness condition of profile decomposition.

\subsection{Behavior of the residual term}
Now, let $(X,G)$ be as in \cref{theorem;ProDeco in X} and let $(Y,\|\cdot\|_Y)$ be a normed space.
Suppose that the embedding 
$X\hookrightarrow Y$ 
is $G$-completely continuous.
%
We next investigate relations between 
the exactness condition~\eqref{eq;Ishiwata condition X} 
and $G$-completely continuous embeddings. 
When one gets~\eqref{eq;Ishiwata condition X}, then it is natural to discuss the behavior of the residual term in $Y$. 
The following general result implies that 
the residual term satisfying~\eqref{eq;Ishiwata condition X} may become arbitrarily small in $Y$.
Hence this results may be considered as a ``weaker form of $G$-complete continuity". 

\begin{theorem}\label{theorem;weak G-comp conti}
Let $(X,G)$ be as in \cref{theorem;ProDeco in X} and let $(Y,\|\cdot\|_Y)$ be a normed space.
Suppose that the embedding 
$X\hookrightarrow Y$
is $G$-completely continuous.
Also assume that a double-suffix sequence $(u^L_n)$ in $X$ satisfies that 
\begin{align*}
&\sup_{n,L\in\N}\|u^L_n\|_X<\infty, \\
&\lim_{L\to\infty}\sup_{\phi\in B_{X^*}(1)}
\varlimsup_{n\to\infty}\sup_{g\in G}
\qty|\la \phi,g^{-1}u^L_n\ra_X |=0.
\end{align*}
Then it holds that 
\begin{equation*}
\lim_{L\to\infty}\varlimsup_{n\to\infty}
\|u^L_n\|_Y=0.
\end{equation*}
\end{theorem}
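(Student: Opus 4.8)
The plan is to argue by contradiction, extracting from the double-indexed family a \emph{single} sequence that converges $G$-weakly to zero while remaining bounded away from zero in $Y$, which is impossible by $G$-complete continuity. Write $M \coloneqq \sup_{n,L}\|u^L_n\|_X < \infty$ and $b_L \coloneqq \sup_{\phi\in B_{X^*}(1)}\varlimsup_{n\to\infty}\sup_{g\in G}|\la\phi, g^{-1}u^L_n\ra|$, so that the second hypothesis reads $b_L \to 0$ as $L\to\infty$. Since $\varlimsup_{n}\|u^L_n\|_Y \ge 0$, it suffices to rule out $\varlimsup_{L\to\infty}\varlimsup_{n\to\infty}\|u^L_n\|_Y > 0$. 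Assume then, for contradiction, that there exist $\delta > 0$ and a strictly increasing sequence $(L_k)$ with $\varlimsup_{n\to\infty}\|u^{L_k}_n\|_Y > \delta$ for every $k$; in particular each set $S_k \coloneqq \{n : \|u^{L_k}_n\|_Y > \delta\}$ is infinite.

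First I would set up a diagonal selection of the second index. Fix a countable set $D = \{\phi_i\}_{i\ge 1}$ dense in $B_{X^*}(1)$. For each fixed $i$ and $k$, the definition of $b_{L_k}$ gives $\varlimsup_n \sup_g|\la\phi_i, g^{-1}u^{L_k}_n\ra| \le b_{L_k}$, so only finitely many $n$ violate $\sup_g|\la\phi_i, g^{-1}u^{L_k}_n\ra| \le b_{L_k} + 1/k$. Hence I can choose $n_k \in S_k$ with $n_k > n_{k-1}$, large enough that this bound holds simultaneously for all $i \le k$. Setting $z_k \coloneqq u^{L_k}_{n_k}$, this guarantees $\|z_k\|_Y > \delta$, while for each fixed $i$ one has $\sup_g|\la\phi_i, g^{-1}z_k\ra| \le b_{L_k} + 1/k \to 0$ as $k\to\infty$.

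Next I would upgrade this from $D$ to all of $X^*$. The maps $f_k \colon \phi \mapsto \sup_{g\in G}|\la\phi, g^{-1}z_k\ra|$ are seminorms on $X^*$ with $f_k(\phi) \le \|\phi\|\,\|z_k\|_X \le M\|\phi\|$, since each $g$ is an isometry; consequently $|f_k(\phi) - f_k(\psi)| \le f_k(\phi-\psi) \le M\|\phi-\psi\|$, so the $f_k$ are $M$-Lipschitz uniformly in $k$. As $f_k(\phi_i)\to 0$ for every $\phi_i \in D$ and $D$ is dense, the uniform Lipschitz bound forces $f_k(\phi)\to 0$ for every $\phi\in X^*$. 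By \cref{definition;G-weak convergence} this is exactly the statement $z_k \to 0$ $G$-weakly in $X$. Then $G$-complete continuity of $X\hookrightarrow Y$ yields $\|z_k\|_Y \to 0$, contradicting $\|z_k\|_Y > \delta$, and the theorem follows.

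The step I expect to be the main obstacle is precisely this passage: the hypothesis places $\sup_{\phi}$ \emph{outside} the limit in $n$, so it controls the $G$-supremum only $\phi$-by-$\phi$ along the tail in $n$, whereas $G$-complete continuity demands one honestly $G$-weakly null sequence. The difficulty is that $n_k$ must be fixed once and for all, before any particular $\phi$ is singled out, yet must tame $\sup_g|\la\phi, g^{-1}z_k\ra|$ for a rich enough family of $\phi$; the diagonal selection over the countable dense set $D$ together with the uniform Lipschitz equicontinuity of the $f_k$ is what legitimizes it. This is where separability of $X^*$ is convenient, and it is available in the Sobolev-space applications, where $1<p<\infty$ makes the underlying space separable and reflexive. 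Alternatively, from the failure of $G$-weak nullity one extracts a subsequence of $(g_k^{-1}z_k)$ with nonzero weak limit $w$ and shows $\la\phi, w\ra = 0$ for all $\phi$ via the same tail estimate, again reducing matters to controlling countably many test functionals.
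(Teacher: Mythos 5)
Your argument is correct and ends in the same contradiction as the paper's proof, but it is genuinely more careful at the one delicate point, and that extra care buys something real. The paper's own proof fixes a single $\phi\in X^*$, then extracts indices $(L_j,n_j)$ satisfying $\sup_{g\in G}|\la\phi,g^{-1}u^{L_j}_{n_j}\ra|\le 2^{1-j}$ together with $\|u^{L_j}_{n_j}\|_Y\ge\varepsilon/4$, and immediately asserts that $u^{L_j}_{n_j}\to 0$ $G$-weakly; as written, the index $n_j$ depends on the chosen $\phi$ (only $L_j$ can be taken $\phi$-independent, thanks to the $\sup_\phi$ sitting outside the $\varlimsup_n$), so the displayed limit is only established for that one $\phi$ along a $\phi$-dependent subsequence. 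This is precisely the uniformity issue you flag, and your repair --- diagonalizing over a countable norm-dense subset $D\subset B_{X^*}(1)$ and then upgrading to all of $X^*$ via the uniform $M$-Lipschitz bound $|f_k(\phi)-f_k(\psi)|\le f_k(\phi-\psi)\le M\|\phi-\psi\|_{X^*}$ on the seminorms $f_k(\phi)=\sup_{g\in G}|\la\phi,g^{-1}z_k\ra|$ --- is sound and produces one honest $G$-weakly null sequence $(z_k)$ with $\|z_k\|_Y>\delta$, which is what invoking $G$-complete continuity actually requires. The price is that your proof needs $X^*$ to be separable, which is not among the hypotheses of the theorem (it is stated for an arbitrary reflexive Banach dislocation space), although it does hold in every application made of it in this paper, since $W^{m,p}(\RN)$ with $1<p<\infty$ has separable dual. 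If you want the stated generality you must either record separability as an explicit assumption or find a substitute for the dense-set diagonal; your closing alternative (extracting a weak limit of $(g_k^{-1}z_k)$) does not obviously escape the same difficulty, because the tail estimate in $n$ is still only available $\phi$-by-$\phi$. In short: your proof is a valid, and in fact more complete, version of the paper's argument in the separable setting, and your diagnosis of where the argument is fragile is accurate.
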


\begin{proof}
Suppose on the contrary that there is $\varepsilon >0$ such that 
$(u^L_n)$ satisfies 
\begin{align*}
&\varlimsup_{L\to\infty}\varlimsup_{n\to\infty} \|u^L_n\|_Y \ge \varepsilon.
\end{align*}
Fix $\phi\in X^*$.
Then for all $j\in\N$ there exists $L_j\in\N$ such that 
\begin{align*}
&\varlimsup_{n\to\infty}
\sup_{g\in G} \qty|\la  \phi, g^{-1} u^{L_j}_n \ra_X  | \le 2^{-j}, \\ 
&\varlimsup_{n\to\infty} \|u^{L_j}_n\|_Y \ge \varepsilon/2.
\end{align*}
Furthermore, there exists $n_j\in\N$ such that 
\begin{align*}
&\sup_{g\in G} \qty|\la \phi, g^{-1} u^{L_j}_{n_j} \ra_X  | \le 2^{1-j}, \\ 
&\|u^{L_j}_{n_j}\|_Y \ge \varepsilon/4.
\end{align*}
Thus one finds that 
\begin{equation*}
\lim_{j\to\infty} \sup_{g\in G} \qty|\la \phi, g^{-1} u^{L_j}_{n_j} \ra_X  | =0,
\end{equation*}
which means that $u^{L_j}_{n_j}\to 0$ $G$-weakly in $X$. 
Hence the $G$-completely continuous embedding $X\hookrightarrow Y$ yields 
\begin{equation*}
\lim_{j\to\infty}\|u^{L_j}_{n_j}\|_Y=0.
\end{equation*}
But this contradicts $\|u^{L_j}_{n_j}\|_Y \ge \varepsilon/4$ for all $j\in\N$.
Thus the proof is complete. 
\end{proof}

\subsection{Profile decomposition in general Hilbert spaces} \label{section;ProDecoHilbert}
We shall provide a theorem of profile decomposition with a double suffix residual term in general Hilbert spaces offered by Ikoma, Ishiwata and Tao, independently, and its complete proof for the sake of convenience. 
In Hilbert cases, there is a simple sufficient condition for a Hilbert space to be a dislocation space.

\begin{proposition}[Hilbert spaces as dislocation spaces]\label{prop;Hilbert dislocation space}
Let $H$ be a Hilbert space and let 
$G$ be a subgroup of the unitary group on $H$ satisfying 
the following condition:
every sequence $(g_n)$ in $G$ with $g_n\not\rightharpoonup 0$ has an operator-strongly convergent subsequence.
Then $G$ is a dislocation group and $(H,G)$ is a dislocation space.
\end{proposition}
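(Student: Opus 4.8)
The plan is to verify directly the two defining conditions of a dislocation group listed in \cref{def;DslSp}. Condition~(i) there demands exactly that every sequence $(g_n)$ in $G$ with $g_n\not\rightharpoonup 0$ possess an operator-strongly convergent subsequence, which is verbatim the standing hypothesis of the proposition; so there is nothing to prove for~(i). For condition~(ii) I would not argue from scratch but instead reduce to \cref{DslGrpRem}: by that lemma it suffices to check that for every sequence $(g_n)$ in $G$ with $g_n\not\rightharpoonup 0$, the adjoints $(g_n^*)$ admit an operator-strongly convergent subsequence. Since the elements of $G$ are unitary, $g_n^*=g_n^{-1}$, and $(g_n^{-1})$ is again a sequence in $G$ because $G$ is a group, so the whole question is transferred to the behaviour of the inverse sequence.

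The crux is the implication $g_n\not\rightharpoonup 0 \Rightarrow g_n^{-1}\not\rightharpoonup 0$, and this is where the Hilbert structure enters. By the definition of operator-weak convergence, $g_n\not\rightharpoonup 0$ furnishes vectors $u,\phi\in H$, a number $\delta>0$ and a subsequence $(n_k)$ with $|(g_{n_k}u,\phi)|\ge\delta$. Using the unitary identity $g_{n_k}^*=g_{n_k}^{-1}$ together with the symmetry $|(a,b)|=|(b,a)|$ of the inner product, one computes
\[
|(g_{n_k}^{-1}\phi,u)|=|(u,g_{n_k}^{-1}\phi)|=|(g_{n_k}u,\phi)|\ge\delta,
\]
which says precisely that $g_n^{-1}\not\rightharpoonup 0$ (test against $u$ along the same subsequence). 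Consequently the hypothesis, now applied to $(g_n^{-1})$, yields a further subsequence along which $(g_{n_k}^{-1})=(g_{n_k}^{*})$ converges operator-strongly. Feeding this into \cref{DslGrpRem} gives condition~(ii), and therefore $G$ is a dislocation group and $(H,G)$ is a dislocation space.

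The computation itself is elementary; the genuinely load-bearing observation is the equivalence $g_n\not\rightharpoonup 0\Leftrightarrow g_n^{-1}\not\rightharpoonup 0$ for unitaries, a phenomenon special to the self-dual Hilbert setting and the reason one need impose the hypothesis only on $(g_n)$ and not also on $(g_n^{-1})$. The single point I expect to require care is the identification of the Banach-space adjoint $g_n^*\in\mathcal{B}(H^*)$ appearing in \cref{DslGrpRem} with the Hilbert adjoint $g_n^{-1}\in\mathcal{B}(H)$: the conjugate-linear Riesz isometry intertwines the two and preserves operator-strong convergence, so the strongly convergent subsequence obtained for the Hilbert inverses transfers to the Banach adjoints. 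This bookkeeping, rather than the main inequality, is the only place where one must be slightly attentive; everything else is immediate.
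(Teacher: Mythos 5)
Your proof is correct and follows essentially the same route as the paper: both verify condition~(i) directly from the hypothesis and reduce condition~(ii) to \cref{DslGrpRem} via the unitary identity $g^*=g^{-1}$ together with the equivalence $g_n\not\rightharpoonup 0\Leftrightarrow g_n^*\not\rightharpoonup 0$. The only difference is that you spell out the inner-product computation behind that equivalence (and the Riesz identification of the Hilbert and Banach adjoints), which the paper simply asserts.
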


\begin{proof}
It is well known that unitary operators are isometric bijections and that the inverse operator of a unitary operator $T$ is equal to the adjoint operator of $T$.
Now let $(g_n)$ be a sequence in $G$ such that $g_n \not \rightharpoonup 0$, which is equivalent to $g_n^*\not\rightharpoonup 0$ in $\mathcal{B}(H)$. 
Then, by assumption, $(g_n)$ and $(g_n^*)$ have operator-strongly convergent subsequences.
Thus \cref{DslGrpRem} implies~(ii) of \cref{def;DislSp}, hence the conclusion.
\end{proof}

\begin{remark}
\rm 
Concrete examples of dislocation Hilbert spaces are exhibited in, e.g., \cite{T-K}. 
\end{remark}

Now we reach the profile decomposition theorem in Hilbert spaces, which the author learned from~\cite{Ikoma,Ishiwata,tao blog-book}.

\begin{theorem}\label{theorem;general profile decomp.}
Let $(H,G)$ be a dislocation Hilbert space and let 
$(u_n)$ be a bounded sequence in $H$.
Then there exist a number $\Lambda \in \N\cup \{0,+\infty\}$, a subsequence $(N(n)) \subset \NN \ (n \in \NN)$, 
profiles 
$\bbl{w}{l} \in H \ ( l \in \NN^{<\Lambda+1})$, 
dislocations
$\dsl{g}{l}{N(n)} \in G \ (l \in \NN^{<\Lambda+1}, \  n \in \Z_{ \ge l})$,  
and residual terms $r^L_{N(n)}\in H$ $(L\in\NN^{<\Lambda+1}, \ n\in\Z_{\ge L})$ with the relation of a double-suffix profile decomposition 
\begin{align*}
u_{N(n)} = \sum_{l=0}^L \dsl{g}{l}{N(n)}\bbl{w}{l} + r^L_{N(n)}, \quad L\in\NN^{<\Lambda+1}, \ n\in\Z_{\ge L}, 
\end{align*}
such that the following hold:
\begin{enumerate}
\setlength{\itemsep}{0mm}
\setlength{\parskip}{0mm}

\item $\dsl{g}{0}{N(n)}={\rm Id}_H \ (n \ge 0), \quad \bbl{w}{l}\neq 0 \ (1 \le l \in \NN^{<\Lambda +1}).$
\item $\inv{\dsl{g}{l}{N(n)}}\dsl{g}{m}{N(n)} \rightharpoonup 0$
as $n \toinfty$ whenever 
$l\neq m \in \NN^{<\Lambda +1}.$
\item $\inv{\dsl{g}{l}{N(n)}}u_{N(n)} \to \bbl{w}{l}$ weakly in $H \ (n \toinfty, \ l \in \NN^{<\Lambda +1}).$
\item 
For $k\in\NN^{<\Lambda+1}$, 
\begin{equation*}
\inv{\dsl{g}{k}{N(n)}} r^{L}_{N(n)} \to 
\begin{cases}
0  &  \mbox{if} \ k=0,\ldots,L, \\
\bbl{w}{k}  &  \mbox{if} \ k\ge L+1, 
\end{cases}
\end{equation*}
weakly in $H$ as $n\toinfty$. 
\end{enumerate} 
Furthermore, it holds that 
\begin{equation*}
\varlimsup_{n\toinfty}\|u_{N(n)} \|^2
    =\sum_{l=0}^\Lambda \|\bbl{w}{l}\|^2 
    + \lim_{L\to\Lambda}\varlimsup_{n\toinfty}\|r^L_{N(n)} \|^2,
\end{equation*}
and that 
\begin{equation}
\label{eq;Ishiwata condition H}
\lim_{L\to \Lambda}\sup_{\phi\in U}\varlimsup_{n\toinfty}\sup_{g\in G}
\qty|\qty( \phi, g^{-1}r^L_{N(n)} )_H|=0,
\end{equation}
where $U \coloneqq B_{H}(1)$. 
Especially, there holds
\begin{equation*} 
\varlimsup_{n\toinfty}\|u_{N(n)} \|^2 \ge\sum_{l=0}^\Lambda \|\bbl{w}{l}\|^2.
\end{equation*}
\end{theorem}

An essential difference between the Tintarev-type profile decomposition and the above one is the exactness  condition~\eqref{eq;Ishiwata condition H}.

\begin{proof}
\cref{theorem;ProDeco in X} implies the existence of profile elements $(\bbl{w}{l}, \dsl{g}{l}{N(n)},\Lambda )\in H \times G\times (\N\cup\{0,\infty\})$ $( l \in \NN^{<\Lambda+1}, \ n \in \Z_{ \ge l})$ such that 
\begin{alignat}{2}
&\notag \dsl{g}{0}{N(n)}={\rm Id}_H &\quad &(n \ge 0), \\
&\notag \bbl{w}{l}\neq 0 &\quad &(l\in \NN^{<\Lambda+1}), \\
&\label{202103250060} 
\inv{\dsl{g}{l}{N(n)}}\dsl{g}{m}{N(n)} \rightharpoonup 0
&\quad &\mbox{as} \ n \toinfty \  \mbox{whenever} \  l\neq m, \\
&
\inv{\dsl{g}{l}{N(n)}}u_{N(n)} \to \bbl{w}{l} &\quad &\mbox{weakly in} \  H,\notag \\
&
r^L_{N(n)}\coloneqq u_{N(n)}-\sum_{l=0}^L \dsl{g}{l}{N(n)}\bbl{w}{l} &\quad &(L \in \NN^{<\Lambda +1}, \ n \in \Z_{\ge L}), \notag \\ 
&\label{202103250111}
\inv{\dsl{g}{l}{N(n)}} r^{L}_{N(n)} \to 
\begin{cases}
0,  &   0 \le l \le L, \\
\bbl{w}{l},  &  l\ge L+1, 
\end{cases}
 &\quad &\mbox{weakly in} \ H \ (0\le l \le L, \ n\toinfty).  
\end{alignat}

\paragraph{Energy decomposition}
We shall show the energy estimate. 

\begin{claim}
It holds that 
\begin{equation}
\label{eq;100200}
\varlimsup_{n\toinfty}\|u_{N(n)}\|_{H}^2 
= 
\sum_{l=0}^\Lambda \|\bbl{w}{l}\|_H^2
+\lim_{L\to \Lambda} \varlimsup_{n\toinfty}\|r^L_{N(n)}\|_H^2.
\end{equation}
Furthermore, it follows that 
\begin{equation}
\label{eq;1002001}
\lim_{L\to \Lambda}\sup_{\phi\in B_H(1)}\varlimsup_{n\toinfty}\sup_{g\in G}
\qty| \qty( \phi, g^{-1}r^L_{N(n)} )_H|=0.
\end{equation}
\end{claim}

\begin{proof}[Proof of Claim]
For $L\in \NN^{<\Lambda +1}$, it follows that 
\begin{align}
\|u_{N(n)}\|_H^2 &=
\l( \sum_{l=0}^L\dsl{g}{l}{N(n)}\bbl{w}{l} +r^L_{N(n)},
\sum_{k=0}^L\dsl{g}{k}{N(n)}\bbl{w}{k} +r^L_{N(n)}
\r)_H \label{eq;100201}\\
&\notag 
=\sum_{l=0}^L\sum_{k=0}^L 
\qty(\dsl{g}{l}{N(n)}\bbl{w}{l},\dsl{g}{k}{N(n)}\bbl{w}{k})_H
\\
&\notag 
\qquad \qquad 
+2\sum_{l=0}^L \qty(\dsl{g}{l}{N(n)}\bbl{w}{l},r^L_{N(n)})_H
+\|r^L_{N(n)}\|_H^2. 
\end{align}
Also, by \eqref{202103250060} and since  $g\in G$ is unitary (see also \cref{remark of G-weak conv}), one gets 
\begin{align}
\label{eq;100202}
\sum_{l=0}^L\sum_{k=0}^L 
\qty(\dsl{g}{l}{N(n)}\bbl{w}{l},\dsl{g}{k}{N(n)}\bbl{w}{k})_H
&=\sum_{l=0}^L\sum_{k=0}^L 
\qty(\bbl{w}{l},\inv{\dsl{g}{l}{N(n)}}\dsl{g}{k}{N(n)}\bbl{w}{k})_H \\
&\notag =\sum_{l=0}^L\sum_{k=0}^L 
\qty(\bbl{w}{l},\delta_{lk}\bbl{w}{k})_H + o(1) \\
&\notag =\sum_{l=0}^L \|\bbl{w}{l}\|_H^2 +o(1)
\end{align}
as $n\toinfty$.
In addition, from~\eqref{202103250111}, 
it follows that 
\begin{align}\label{eq;100203}
\sum_{l=0}^L \qty(\dsl{g}{l}{N(n)}\bbl{w}{l}, r^L_{N(n)} )_H 
=
\sum_{l=0}^L \qty(\bbl{w}{l}, \inv{\dsl{g}{l}{N(n)}}r^L_{N(n)} )_H 
=o(1)
\end{align}
as $n\toinfty$.
Combining \eqref{eq;100201}--\eqref{eq;100203}
and passing to the limit as $n\toinfty$,  
one obtains  
\begin{equation*}
\varlimsup_{n\toinfty}\|u_{N(n)}\|_{H}^2 
= 
\sum_{l=0}^L \|\bbl{w}{l}\|_H^2
+\varlimsup_{n\toinfty}\|r^L_{N(n)}\|_H^2.
\end{equation*}
Passing to the limit as $L\to \Lambda$, we get~\eqref{eq;100200}. 
Thus~\eqref{eq;1002001} follows from \cref{theorem;ProDeco in X}.  
\end{proof}

Now the proof has been complete.
\end{proof}

\subsection{Proof of \cref{theorem;ProDeco in X}}
The proof of \cref{theorem;ProDeco in X} below consists of iterative procedures to obtain and chase profiles. 
In this subsection, we fix a reflexive Banach space $(X, \|\cdot\|_X)$, and $(X,G)$ is supposed to be a dislocation space with a dislocation group $G$.

\paragraph{Key lemmas}
Firstly, we provide plain lemmas which are key steps of the proof. Those lemmas are established owing to~\cite[Section~4.3]{T-01} with some modifications.  
The modifications result from our limited definitions of dislocation spaces. 
So we also give proofs for them. 

\begin{lemma}\label{Lemma;T-01_431}
Let $(g_n)$ be a sequence in $G$. 
If $g_n \rightharpoonup 0$,  then 
$\inv{g_n} \rightharpoonup 0$ as $n\toinfty$. 
\end{lemma}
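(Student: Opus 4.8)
The plan is to argue by contradiction, exploiting property~(i) of the dislocation group together with the fact that the elements of $G$ are isometries. Suppose $g_n^{-1}\not\rightharpoonup 0$. Since $G$ is a group, $(g_n^{-1})$ is again a sequence in $G$, so property~(i) of \cref{def;DislSp} furnishes a subsequence $(n_j)\preceq(n)$ along which $g_{n_j}^{-1}$ converges operator-strongly, say $g_{n_j}^{-1}\to h$ for some $h\in\mathcal{B}(X)$; explicitly, $g_{n_j}^{-1}v\to hv$ strongly in $X$ for every $v\in X$.

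The crux is to transport this strong convergence back through the isometry $g_{n_j}$. Fix any $u\in X$. Then $g_{n_j}^{-1}u\to hu$ strongly, hence $g_{n_j}^{-1}u-hu\to 0$ strongly. Because each $g_{n_j}$ is a linear isometry, $\|g_{n_j}(g_{n_j}^{-1}u-hu)\|_X=\|g_{n_j}^{-1}u-hu\|_X\to 0$; and since $g_{n_j}(g_{n_j}^{-1}u)=u$, this says $u-g_{n_j}(hu)\to 0$ strongly, i.e.\ $g_{n_j}(hu)\to u$ strongly, and in particular weakly.

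On the other hand, the hypothesis $g_n\rightharpoonup 0$ is inherited by the subsequence, so $g_{n_j}(hu)\rightharpoonup 0$ weakly in $X$. By uniqueness of weak limits we conclude $u=0$. As $u\in X$ was arbitrary and the standing assumption $g_n^{-1}\not\rightharpoonup 0$ already forces $X\neq\{0\}$, this is a contradiction. Therefore $g_n^{-1}\rightharpoonup 0$, as claimed.

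I expect the only delicate point to be the norm-transfer identity $\|g_{n_j}(g_{n_j}^{-1}u-hu)\|_X=\|g_{n_j}^{-1}u-hu\|_X$: it is precisely the isometry property of $g_{n_j}$ that upgrades the operator-strong convergence of $(g_{n_j}^{-1})$ into the strong convergence $g_{n_j}(hu)\to u$, which then collides head-on with the operator-weak convergence $g_{n_j}\rightharpoonup 0$. Note that no reflexivity is used here; the argument relies only on the group and isometry structure of $G$ and on property~(i) of the dislocation group.
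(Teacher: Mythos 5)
Your proof is correct and follows essentially the same route as the paper's: extract an operator-strongly convergent subsequence of $(g_n^{-1})$ via property~(i), use the isometry property to show $g_{n_j}(hu)\to u$ strongly, and collide this with $g_{n_j}(hu)\rightharpoonup 0$ to force $u=0$. The paper phrases the same decomposition as $g_n(g_n^{-1}u)=g_n(g_n^{-1}u-gu)+g_n(gu)\rightharpoonup 0$, which is only a cosmetic difference.
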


\begin{proof}
Let $(g_n)$ be a sequence in $G$ such that $g_n \rightharpoonup 0$.
Suppose $g_n^{-1} \not\rightharpoonup 0$ as $n\toinfty$.
By the definition of dislocation groups, $(\inv{g_n})$ converges to some $g\in\mathcal{B}(X)$ operator-strongly on a certain subsequence.
On this subsequence, and for any $u\in X\setminus\{0\}$,
one sees that 
\begin{equation*}
g_n(\inv{g_n}u)=g_n(\inv{g_n}u-gu)+g_n(gu)
\to 0\ \mbox{weakly in}\ X.
\end{equation*}
However, $g_n(\inv{g_n}u)=u$ and thus one has $u=0$. 
This is a contradiction, and hence one can conclude that $\inv{g_n}\rightharpoonup 0$.
\end{proof}

\begin{lemma}\label{Lemma;T-01_433}
Let $(u_n)$ be a bounded sequence in $X$.
If two sequences $(\dsl{g}{1}{n})$ and $(\dsl{g}{2}{n})$
in $G$ satisfy $\dsl{g}{1}{n}^{-1}u_n \to \bbl{w}{1}$ weakly in $X$ and $\dsl{g}{2}{n}^{-1}(u_n-\dsl{g}{1}{n}\bbl{w}{1})\to  \bbl{w}{2}\neq 0$ weakly in $X$, 
then $\dsl{g}{1}{n}^{-1}\dsl{g}{2}{n}\rightharpoonup 0$ as $n\toinfty$.
\end{lemma}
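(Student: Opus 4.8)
The plan is to reduce the whole statement to the two dislocation-group axioms by a single algebraic rewriting. Write $h_n := \inv{\dsl{g}{1}{n}}\dsl{g}{2}{n}$; since $G$ is a group, $h_n \in G$, and the assertion is exactly $h_n \rightharpoonup 0$. First I would recast the second hypothesis so that one group element acts on a sequence that is already known to be weakly null. Inserting $\dsl{g}{1}{n}\inv{\dsl{g}{1}{n}} = {\rm Id}_X$ and using $\inv{\dsl{g}{2}{n}}\dsl{g}{1}{n} = \inv{h_n}$, one gets
\[
\inv{\dsl{g}{2}{n}}\bigl(u_n - \dsl{g}{1}{n}\bbl{w}{1}\bigr) = \inv{h_n}\bigl(\inv{\dsl{g}{1}{n}}u_n - \bbl{w}{1}\bigr).
\]
Setting $v_n := \inv{\dsl{g}{1}{n}}u_n - \bbl{w}{1}$, the first hypothesis says precisely that $v_n \to 0$ weakly in $X$, while the second now reads $\inv{h_n} v_n \to \bbl{w}{2} \neq 0$ weakly.

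Next I would argue by contradiction. Suppose $h_n \not\rightharpoonup 0$. Applying \cref{Lemma;T-01_431} to the sequence $(\inv{h_n})$ in $G$ in contrapositive form (if $\inv{h_n}\rightharpoonup 0$ then $h_n = (\inv{h_n})^{-1}\rightharpoonup 0$) yields $\inv{h_n} \not\rightharpoonup 0$. Now I invoke axiom~(ii) of \cref{def;DislSp} for the sequence $(\inv{h_n})$ in $G$ together with the weakly null sequence $(v_n)$: there is a subsequence $(n_j) \preceq (n)$ with $\inv{h_{n_j}} v_{n_j} \to 0$ weakly. But along this same subsequence the rewritten second hypothesis forces $\inv{h_{n_j}} v_{n_j} \to \bbl{w}{2}$ weakly, so uniqueness of weak limits gives $\bbl{w}{2} = 0$, contradicting $\bbl{w}{2} \neq 0$. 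Hence $h_n \rightharpoonup 0$, which is the claim.

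The argument is short, and the only genuinely delicate point is the opening rewriting: one must resist comparing $\dsl{g}{1}{n}$ and $\dsl{g}{2}{n}$ directly and instead factor the second hypothesis so that the single group element $\inv{h_n}$ acts on the sequence $v_n$ that the first hypothesis already certifies to be weakly null. Once this normal form is in place, the dislocation structure does all the work: axiom~(ii) can only carry weakly null sequences to weakly null ones, so the assumption $\bbl{w}{2} \neq 0$ is exactly what makes the contradiction bite, and \cref{Lemma;T-01_431} is needed only to pass between the operator-weak behavior of $h_n$ and that of $\inv{h_n}$.
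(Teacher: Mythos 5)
Your proof is correct, but it deploys the dislocation structure differently from the paper. The paper also argues by contradiction, but in the opposite direction: assuming $h_n=\dsl{g}{1}{n}^{-1}\dsl{g}{2}{n}\not\rightharpoonup 0$, it first uses axiom~(i) of \cref{def;DislSp} to extract an operator-strongly convergent subsequence $h_n\to g$, then applies axiom~(ii) to $h_n$ acting on the weakly null sequence $\dsl{g}{2}{n}^{-1}(u_n-\dsl{g}{1}{n}\bbl{w}{1})-\bbl{w}{2}$, deduces $\dsl{g}{1}{n}^{-1}u_n-\bbl{w}{1}\to g\bbl{w}{2}$ weakly, compares with the first hypothesis to get $g\bbl{w}{2}=0$, and finally needs the observation that the strong limit $g$ of isometries is itself an isometry to conclude $\bbl{w}{2}=0$. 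Your normal form $\dsl{g}{2}{n}^{-1}(u_n-\dsl{g}{1}{n}\bbl{w}{1})=\inv{h_n}v_n$ with $v_n\to 0$ weakly lets you apply axiom~(ii) directly to $(\inv{h_n})$ and $(v_n)$, so you bypass both the operator-strong subsequence extraction and the isometry-of-the-limit argument, at the modest cost of invoking \cref{Lemma;T-01_431} (in contrapositive, applied to $\inv{h_n}$) to pass from $h_n\not\rightharpoonup 0$ to $\inv{h_n}\not\rightharpoonup 0$. Both arguments are sound; yours is slightly leaner and isolates more cleanly where the hypothesis $\bbl{w}{2}\neq 0$ enters, while the paper's version keeps the isometric structure of $G$ visible and does not depend on the preceding lemma. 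Your handling of the subsequence issue is also correct: axiom~(ii) only gives weak nullity along a subsequence, but since the full sequence $\inv{h_n}v_n$ converges weakly to $\bbl{w}{2}$, uniqueness of weak limits along that subsequence suffices.
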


\begin{proof}
Let $(u_n), (\dsl{g}{j}{n}), \  j=1,2,$ and $\bbl{w}{j}, \ j=1,2,$ be 
as in \cref{Lemma;T-01_433}.
Suppose $\dsl{g}{1}{n}^{-1}\dsl{g}{2}{n} \not\rightharpoonup 0$ as $n\toinfty$.
By the definition of dislocation groups, 
$\dsl{g}{1}{n}^{-1}\dsl{g}{2}{n}$ 
has an operator-strongly convergent subsequence (renumbered), say 
$\dsl{g}{1}{n}^{-1}\dsl{g}{2}{n} \to g \in \mathcal{B}(X)$ 
as $n\toinfty$.
It follows from 
the definition of dislocation groups that 
up to a subsequence,  
\begin{equation*}
\dsl{g}{1}{n}^{-1}\dsl{g}{2}{n} 
\l( 
\dsl{g}{2}{n}^{-1} (u_n - \dsl{g}{1}{n} \bbl{w}{1}) - \bbl{w}{2} 
\r) \to 0\quad \mbox{weakly in}\ X,
\end{equation*}
yielding 
\begin{equation*}
    \dsl{g}{1}{n}^{-1}(u_n - \dsl{g}{1}{n} \bbl{w}{1}) \to g\bbl{w}{2}.
\end{equation*}
On the other hand, we have 
\begin{equation*}
    \dsl{g}{1}{n}^{-1}(u_n - \dsl{g}{1}{n} \bbl{w}{1}) \to 0\quad \mbox{weakly in}\ X,
\end{equation*}
so that 
\begin{equation*}
    g\bbl{w}{2} = 0,
\end{equation*}
which is a contradiction since 
$g$ must be an isometry and $\bbl{w}{2}\neq 0$.
\end{proof}

\begin{lemma}\label{Lemma;T-01_434}
Let $(u_n)$ be a bounded sequence in $X$ and let 
sequences $(\dsl{g}{l}{n})_n$ in $G$ and $\bbl{w}{l}\in X$ 
$(l=0,\ldots,L)$ satisfy 
$\dsl{g}{0}{n}={\rm Id}_X, \ \dsl{g}{l}{n}^{-1}u_n \to \bbl{w}{l}$ weakly in $X$ $(l=0,\ldots,L)$ 
and $\dsl{g}{l}{n}^{-1}\dsl{g}{m}{n} \rightharpoonup 0$ 
whenever $0\le l<m \le L$ as $n\toinfty$.
Assume that there exists a sequence $(\dsl{g}{L+1}{n})$ in $G$
such that, on a renumbered subsequence, 
$$
\dsl{g}{L+1}{n}^{-1}\l(
u_n -\sum_{l=0}^L \dsl{g}{l}{n}\bbl{w}{l}    
\r) \to \bbl{w}{L+1}\neq 0\quad \mbox{weakly in}\ X.
$$ 
Then for $l=0,\ldots,L$, 
$$
\dsl{g}{l}{n}^{-1}\dsl{g}{L+1}{n} \rightharpoonup 0
\quad \mbox{as}\s n\toinfty.
$$
\end{lemma}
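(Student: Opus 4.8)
The plan is to fix $l \in \{0, \ldots, L\}$ and run, for the single pair $(\dsl{g}{l}{n}, \dsl{g}{L+1}{n})$, essentially the same contradiction scheme that proves \cref{Lemma;T-01_433}, the only genuinely new feature being that the extra profiles $\bbl{w}{k}$ with $k \neq l$ must be disposed of by means of the \emph{assumed} mutual orthogonality of $\dsl{g}{0}{n}, \ldots, \dsl{g}{L}{n}$. Throughout I write $r^L_n := u_n - \sum_{k=0}^L \dsl{g}{k}{n}\bbl{w}{k}$, so the standing hypothesis reads $\inv{\dsl{g}{L+1}{n}} r^L_n \to \bbl{w}{L+1}\neq 0$ weakly in $X$.

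First I would argue by contradiction: suppose $\inv{\dsl{g}{l}{n}}\dsl{g}{L+1}{n} \not\rightharpoonup 0$. Since $G$ is a dislocation group, condition~(i) of \cref{def;DislSp} lets me pass to a renumbered subsequence along which $\inv{\dsl{g}{l}{n}}\dsl{g}{L+1}{n} \to g$ operator-strongly for some $g \in \mathcal{B}(X)$; as an operator-strong limit of bijective isometries, $g$ is norm-preserving, hence injective, so in particular $g \neq 0$. Because $\inv{\dsl{g}{L+1}{n}} r^L_n - \bbl{w}{L+1} \to 0$ weakly while $\inv{\dsl{g}{l}{n}}\dsl{g}{L+1}{n} \not\rightharpoonup 0$, condition~(ii) of \cref{def;DislSp} applies, letting the sequence $\inv{\dsl{g}{l}{n}}\dsl{g}{L+1}{n}$ act on this weakly null sequence to yield, along a further subsequence,
\[
\inv{\dsl{g}{l}{n}} r^L_n - \inv{\dsl{g}{l}{n}}\dsl{g}{L+1}{n}\bbl{w}{L+1} \to 0 \quad \text{weakly in } X.
\]
Since $\inv{\dsl{g}{l}{n}}\dsl{g}{L+1}{n}\bbl{w}{L+1} \to g\bbl{w}{L+1}$ strongly, this gives $\inv{\dsl{g}{l}{n}} r^L_n \to g\bbl{w}{L+1}$ weakly.

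The second, independent computation evaluates the same weak limit directly. Expanding $\inv{\dsl{g}{l}{n}} r^L_n = \inv{\dsl{g}{l}{n}} u_n - \sum_{k=0}^L \inv{\dsl{g}{l}{n}}\dsl{g}{k}{n}\bbl{w}{k}$, I would use $\inv{\dsl{g}{l}{n}} u_n \to \bbl{w}{l}$ (hypothesis), the diagonal identity $\inv{\dsl{g}{l}{n}}\dsl{g}{l}{n}\bbl{w}{l} = \bbl{w}{l}$, and, for $k \neq l$ with $0 \le k \le L$, the weak vanishing $\inv{\dsl{g}{l}{n}}\dsl{g}{k}{n}\bbl{w}{k} \to 0$, which follows from the assumed orthogonality $\inv{\dsl{g}{l}{n}}\dsl{g}{k}{n} \rightharpoonup 0$ (valid for both $k<l$ and $k>l$ after invoking \cref{Lemma;T-01_431} to pass between a dislocation and its inverse). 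Hence $\inv{\dsl{g}{l}{n}} r^L_n \to 0$ weakly, and uniqueness of weak limits forces $g\bbl{w}{L+1} = 0$. This contradicts the injectivity of $g$ together with $\bbl{w}{L+1} \neq 0$, so $\inv{\dsl{g}{l}{n}}\dsl{g}{L+1}{n} \rightharpoonup 0$; as $l$ was arbitrary, the lemma follows.

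The step I expect to be the crux is the off-diagonal computation of the third paragraph, where a naive reading suggests circularity: the terms $\inv{\dsl{g}{l}{n}}\dsl{g}{k}{n}\bbl{w}{k}$ look like the very conclusion being sought. The resolution is that these terms involve only indices $k \le L$, for which orthogonality is a standing hypothesis of the lemma, whereas the genuinely new index $L+1$ enters solely through the nonzero profile $\bbl{w}{L+1}$, which is precisely what delivers the contradiction. A minor technical point to handle carefully is the two successive subsequence extractions—first via condition~(i), then via condition~(ii)—together with the verification that the operator-strong limit $g$ is a nonzero injective isometry; both are routine once the dislocation-space axioms are invoked in the correct order.
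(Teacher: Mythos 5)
Your proof is correct and rests on exactly the mechanism the paper intends: the contradiction scheme of \cref{Lemma;T-01_433} (an operator-strongly convergent subsequence via axiom~(i) of \cref{def;DislSp}, transport of the weakly null sequence via axiom~(ii), and injectivity of the limiting isometry $g$), with \cref{Lemma;T-01_431} correctly invoked to convert the hypothesis $\inv{\dsl{g}{k}{n}}\dsl{g}{l}{n}\rightharpoonup 0$ into $\inv{\dsl{g}{l}{n}}\dsl{g}{k}{n}\rightharpoonup 0$ for $k<l$. The paper merely asserts that the lemma ``follows from induction on $L$'' using these ingredients; your direct, non-inductive unfolding—absorbing the extra profiles through the assumed mutual orthogonality rather than peeling them off one at a time—is the same argument written out in full.
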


\begin{proof}
\cref{Lemma;T-01_434} follows from induction on $L$  
with the definition of dislocation groups and \cref{Lemma;T-01_433}.
\end{proof}

\paragraph{Main body}
With the above lemmas, we shall prove \cref{theorem;ProDeco in X} based on~\cite{Ikoma,Ishiwata,tao blog-book,T-01,T-K}.

\begin{proof}[Proof of \cref{theorem;ProDeco in X}]
We divide the proof into six steps. 

\begin{settings}
For a bounded sequence $(u_n)$ in $X$,  we define 
\begin{align}\notag
D[(u_n)] &\coloneqq \bigl\{ w\in X; \ \exists\,(n_k) \preceq (n), \ 
\exists\, (g_{n_k}) \subset G \\
&\hspace{4cm} \mbox{s.t.} \ g_{n_k}^{-1} u_{n_k} \to w 
\ \mbox{weakly in} \ X \bigr\}, \\
p[(u_n)] &\coloneqq \sup \qty{ \|w\|_{X}; \ w \in D[(u_n)] }.
\end{align}
Then one can check that 
\begin{align}\notag
p[(u_n)] = 0 \quad 
&\Leftrightarrow \quad D[(u_n)]=\{0\} \\
&\Leftrightarrow  \quad 
u_n \to 0 \quad \mbox{$G$-weakly in} \  X  \s \mbox{as\s }  n \toinfty.
\end{align}
Moreover, since a dislocation group consists of isometries and each weakly convergent sequence is bounded, one sees that $p[(u_n)]$ is finite and bounded by $\sup_{n}\|u_n\|_{X}$. 
Note that $D[(u_n)]$ stands for the set of all possible profiles of $(u_n)$. 
\end{settings}

\begin{step}[Base step]
Let $(u_n)$ be a bounded sequence in $X$. 
From the reflexivity of $X$, there exist a subsequence $(i(0,n)) \preceq (n)$, $n\ge 0$,   
and $\bbl{w}{0} \in X$ such that 
\begin{equation}\notag 
u_{i(0,n)} \to \bbl{w}{0} \quad \mbox{weakly}.
\end{equation}
So we shall define 
$$
\dsl{g}{0}{i(0,n)}={\rm Id}_{X} \in G \quad (n \ge 0),
$$
and it follows that 
$$
r^0_{i(0,n)} \coloneqq u_{i(0,n)} - \dsl{g}{0}{i(0,n)}\bbl{w}{0}
\to 0 \quad \mbox{weakly}.
$$

Set $p_1 \coloneqq p[(r^0_{i(0,n)})]$.
If $p_1=0$,  then
the profile decomposition of $(u_n)$ is accomplished at this stage and the proof is complete by letting $\Lambda = 0$ and $N(n)=i(0,n)$. 
So  we now assume that 
$p_1\neq 0$.
Then by the definition of $D[(r^0_{i(0,n)})]$,  
there exist a subsequence $(i(1,n)) \preceq (i(0,n))$, 
$\bbl{w}{1} \in X \0$ and $(\dsl{g}{1}{i(1,n)}) \subset G \ (n\ge 0)$ 
such that
\begin{align*}
&\bbl{w}{1}\in D[(r^0_{i(0,n)})], \\ 
&0< \frac{1}{2} p_1 \le \|\bbl{w}{1}\|_{X} \le 
p_1, \\
&\dsl{g}{1}{i(1,n)}^{-1} r^0_{i(1,n)} \to \bbl{w}{1} 
\mbox{\quad weakly}.
\end{align*}
And obviously, one gets 
$\dsl{g}{0}{i(1,n)}^{-1} u_{i(1,n)} \to \bbl{w}{0}$ weakly in $X$.
From \cref{Lemma;T-01_431,Lemma;T-01_433}, it follows that 
\begin{equation}
\dsl{g}{0}{i(1,n)}^{-1} \dsl{g}{1}{i(1,n)} \rightharpoonup 0
\quad \mbox{as\ }  n \toinfty. 
\end{equation}
\end{step}

\begin{step}[Inductive step]
Suppose that for some $L\in\NN$, 
\begin{alignat}{2}
&\exists\, (i(l,n)) \preceq (i(l-1,n)) 
&\quad &(0 \le l \le L, \ i(-1,n) \coloneqq n \ \mbox{formally}), \label{202103230010}\\
&\exists\, \bbl{w}{l}\in X &\quad &(0 \le l \le L),  \label{202103230020}\\
&\exists\, \dsl{g}{l}{i(L,n)} \in G &\quad &(0 \le l \le L, \ n\in\NN), \label{202103230030}\\
&\exists\, r^{l}_{i(L,n)}\in X  &\quad &(0 \le l \le L, \ n\in\NN), \label{202103230040}\\
&\exists\, p_l>0 &\quad &(1 \le l \le L), \label{202103230050}
\end{alignat}
such that 
\begin{alignat}{2}
& \dsl{g}{0}{i(L,n)}={\rm Id}_{X} \  (n\ge 0), &\quad &\bbl{w}{l}\neq 0 \ (1 \le l \le  L),  \label{202103230060}\\
&\dsl{g}{l}{i(L,n)}^{-1} \dsl{g}{k}{i(L,n)} \rightharpoonup 0 
 &\quad &(n\toinfty, \ 0\le l \neq k \le L), \label{202103230070}\\
&\dsl{g}{l}{i(L,n)}^{-1} u_{i(L,n)} \to \bbl{w}{l} 
&\quad &\mbox{weakly} \  (0 \le l \le  L), \label{202103230080}\\
&r^{l}_{i(L,n)}\coloneqq u_{i(L,n)}-\sum_{k=0}^l \dsl{g}{k}{i(L,n)}\bbl{w}{k} &\quad &(0 \le l \le  L), \label{202103230090}\\
&\dsl{g}{k}{i(L,n)}^{-1} r^{l}_{i(L,n)} \to 0 
&\quad  &\mbox{weakly} \ (0 \le k \le l \le L), \label{202103230100} \\ 
&\dsl{g}{l}{i(L,n)}^{-1} r^{l-1}_{i(L,n)} \to \bbl{w}{l} 
&\quad  &\mbox{weakly} \  (1 \le l \le L),\notag \\ 
& 0<\frac{p_l}{2} \le \|\bbl{w}{l}\|_{X} \le p_l &\quad &(1 \le l \le L),  \label{202103230120}
\end{alignat}
where $p_l\coloneqq p[(r^{l-1}_{i(l-1,n)})] \ (1 \le l \le  L)$.

We may assume that 
$p_{L+1}\coloneqq p[(r^{L}_{i(L,n)})]\neq 0$; otherwise, profile decomposition is accomplished at this stage by letting $\Lambda = L$ and $N(n) \coloneqq i(L,n)$. 
Then by the definition of $p_{L+1}$,  there exist 
\begin{align}
&(i(L+1,n)) \preceq (i(L,n)), \label{202103230130} \\
&\bbl{w}{L+1}\in X\0, \notag \\
&\dsl{g}{L+1}{i(L+1,n)}\in G \ (n\ge 0), \notag \\ 
\intertext{such that} 
&\dsl{g}{L+1}{i(L+1,n)}^{-1} r^{L}_{i(L+1,n)} \to \bbl{w}{L+1}
\quad \mbox{weakly}, \label{202103230140}\\
&0<\frac{p_{L+1}}{2} \le \|\bbl{w}{L+1}\|_{X} \le p_{L+1}. \notag 
\end{align}
By \eqref{202103230080} and \eqref{202103230130}, we get 
\begin{equation}\label{202103230150}
\dsl{g}{l}{i(L+1,n)}^{-1} u_{i(L+1,n)} \to \bbl{w}{l}
\quad \mbox{weakly} \  (n\toinfty,\  0 \le l \le L). 
\end{equation}
Also, by \eqref{202103230070} and \eqref{202103230130}, we get 
\begin{equation}\label{202103230160}
\dsl{g}{k}{i(L+1,n)}^{-1}\dsl{g}{l}{i(L+1,n)} 
\rightharpoonup 0 \quad (n\toinfty, \ 0 \le l \neq k\le L).
\end{equation}
From \cref{Lemma;T-01_431,Lemma;T-01_433,Lemma;T-01_434} and from~\eqref{202103230140}--\eqref{202103230160},
we observe that 
\begin{equation}\label{202103230170}
\l\{
\begin{aligned}
\dsl{g}{l}{i(L+1,n)}^{-1} \dsl{g}{L+1}{i(L+1,n)} &\rightharpoonup 0
\quad (n\toinfty, \ 0 \le l \le L),  \\
\dsl{g}{L+1}{i(L+1,n)}^{-1} \dsl{g}{l}{i(L+1,n)} 
&=(\dsl{g}{l}{i(L+1,n)}^{-1} \dsl{g}{L+1}{i(L+1,n)} )^{-1} \\
&\rightharpoonup 0
\quad (n\toinfty, \ 0 \le l \le L).
\end{aligned}
\r. 
\end{equation}
So from~\eqref{202103230160} and~\eqref{202103230170}, it follows that 
\begin{equation}\label{202103230180}
\dsl{g}{k}{i(L+1,n)}^{-1} \dsl{g}{l}{i(L+1,n)} 
\rightharpoonup 0 
\quad (n \toinfty, \ 0\le l \neq k \le L+1).
\end{equation}
Also, from~\eqref{202103230090},~\eqref{202103230140} and~\eqref{202103230170}, it follows that 
\begin{equation}\label{202103230190}
\dsl{g}{L+1}{i(L+1,n)}^{-1}u_{i(L+1,n)} \to \bbl{w}{L+1}
\quad \mbox{weakly}.
\end{equation}

Set the $(L+1)$-st residual term  as 
\begin{equation*}
r^{L+1}_{i(L+1,n)} \coloneqq u_{i(L+1,n)}-
\sum_{l=0}^{L+1}\dsl{g}{l}{i(L+1,n)}\bbl{w}{l} \quad (n\ge 0),
\end{equation*}
and then, we observe from~\eqref{202103230100},~\eqref{202103230180} and~\eqref{202103230190} that 
\begin{equation}\notag 
\dsl{g}{l}{i(L+1,n)}^{-1} r^{L+1}_{i(L+1,n)} \to 0
\quad \mbox{weakly} \ (0 \le l \le L+1).
\end{equation}
Thus one has obtained: 
\begin{alignat*}{2}
& \exists\, (i(l,n)) \preceq (i(l-1,n))  
&\quad  &(0 \le l \le  L+1, \ i(-1,n) \coloneqq n \  \mbox{formally}), \\
&\exists\, \bbl{w}{l}\in X  &\quad  &(0 \le l \le L+1), \\
&\exists\, \dsl{g}{l}{i(L+1,n)} \in G  &\quad  &(0\le l \le L+1, \ n\in\NN), \\
&\exists\, r^{l}_{i(L+1,n)}\in X  &\quad  &(0\le l \le L+1, \ n\in\NN), \\
&\exists\, p_l>0  &\quad  &(1 \le l \le  L+1), 
\end{alignat*}
such that 
\begin{alignat*}{2}
 &\dsl{g}{0}{i(L+1,n)}={\rm Id}_{X} &\quad  &(n\ge 0),  \\ 
 &\bbl{w}{l}\neq 0 &\quad &(1 \le l \le  L+1), \\
 &\dsl{g}{l}{i(L+1,n)}^{-1} \dsl{g}{k}{i(L+1,n)} \rightharpoonup 0  &\quad &(n\toinfty, \ 0 \le l\neq k \le L+1), \\
 &\dsl{g}{l}{i(L+1,n)}^{-1} u_{i(L+1,n)} \to \bbl{w}{l}  &\quad  &\mbox{weakly} \  (0 \le l \le L+1), \\
 &r^{l}_{i(L+1,n)}\coloneqq u_{i(L+1,n)}-\sum_{k=0}^l \dsl{g}{k}{i(L+1,n)}\bbl{w}{k}  &\quad  &(0 \le l \le  L+1), \\
 &\dsl{g}{k}{i(L+1,n)}^{-1} r^{l}_{i(L+1,n)} \to 0 &\quad  &\mbox{weakly} \ (0 \le k  \le l \le  L+1), \\
 &\dsl{g}{l}{i(L+1,n)}^{-1} r^{l-1}_{i(L+1,n)} \to \bbl{w}{l}  &\quad  &\mbox{weakly} \  (1 \le l \le L+1), \\
 &0<\frac{p_l}{2} \le \|\bbl{w}{l}\|_{X} \le p_l  &\quad  &(1 \le l \le L+1).
\end{alignat*}
\end{step}

\begin{step}[Conclusion of induction]
Owing to Steps~1 and~2, one can conclude:
for any $L\in\NN^{<\Lambda +1}$ with $\Lambda$ being  defined later, \eqref{202103230010}--\eqref{202103230130} hold true.

The number of nontrivial profiles $\Lambda \in \N\cup\{0,+\infty\}$ is defined as follows. 
If the iterative procedure above ends up at the ($L+1$)-st step  for some $L\in\N$ with $p_{L+1}$ being zero for the first time,  then $\Lambda = L$, i.e., $L$ is the number of nontrivial profiles except for the principal profile $\bbl{w}{0}$ which always exists and is possibly zero. Otherwise, the number of nontrivial profiles is infinite if the iterative procedure does not end up at all.  
\end{step}

\begin{step}[Condition~\eqref{eq;Ishiwata condition X}]
We prove the following claim.

\begin{claim}
For any $0 \le L < \Lambda$ and any $\phi\in U\coloneqq B_{X^*}(1)$, 
\begin{equation}
\label{202103230200}
\varlimsup_{n\toinfty} \sup_{g \in G } \qty|\la  \phi,  g^{-1} r^L_{i(L,n)}\ra |
\le 2 \|\bbl{w}{L+1}\|_{X}.
\end{equation}
Moreover, if $\Lambda <\infty$, then for any $\phi\in U$,
\begin{equation*}
\varlimsup_{n\toinfty} \sup_{g \in G } \qty|\la  \phi,  g^{-1} r^\Lambda_{i(L,n)}\ra |
=0.
\end{equation*}
\end{claim}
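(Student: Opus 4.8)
The plan is to control the single quantity
$S_n \coloneqq \sup_{g\in G}|\la \phi, g^{-1} r^L_{i(L,n)}\ra|$
directly by the size $p_{L+1}=p[(r^L_{i(L,n)})]$ of the largest remaining profile of the residual sequence, and then to invoke the bound $p_{L+1}\le 2\|\bbl{w}{L+1}\|_X$ already recorded in Step~2 (it is exactly the left half of $0<\tfrac{p_{L+1}}{2}\le \|\bbl{w}{L+1}\|_X$). Fix $\phi\in U=B_{X^*}(1)$. First I would pass to a subsequence $(n_k)$ realizing the limit superior, i.e.\ $S_{n_k}\to \varlimsup_{n\to\infty}S_n$, and for each $k$ choose an approximate maximizer $g_{n_k}\in G$ with $|\la\phi, g_{n_k}^{-1} r^L_{i(L,n_k)}\ra|\ge S_{n_k}-1/k$; this is legitimate since each $S_n$ is finite, being bounded by $\sup_n\|r^L_{i(L,n)}\|_X<\infty$.

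The core step uses reflexivity. Because $g_{n_k}$ is an isometry, the sequence $(g_{n_k}^{-1}r^L_{i(L,n_k)})_k$ is bounded in $X$, so along a further subsequence it converges weakly to some $w\in X$. By the very definition of $D[(r^L_{i(L,n)})]$ in the \textsc{Settings}, this weak limit belongs to $D[(r^L_{i(L,n)})]$, whence $\|w\|_X\le p_{L+1}$. Since $\phi$ is fixed, weak convergence gives $|\la\phi, g_{n_k}^{-1}r^L_{i(L,n_k)}\ra|\to |\la\phi,w\ra|$, and combining this with the approximate-maximizer inequality and $S_{n_k}\to \varlimsup_n S_n$ yields $\varlimsup_{n\to\infty}S_n \le |\la\phi,w\ra|\le \|\phi\|_{X^*}\|w\|_X \le p_{L+1}\le 2\|\bbl{w}{L+1}\|_X$, which is~\eqref{202103230200}. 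For the case $\Lambda<\infty$ the iteration terminated with $p_{\Lambda+1}=p[(r^\Lambda_{i(\Lambda,n)})]=0$; since $p[\,\cdot\,]=0$ forces the relevant $D$-set to be $\{0\}$, the identical argument with $L=\Lambda$ (so $w=0$) gives the bound $0$, i.e.\ the terminal residual $r^\Lambda_{i(\Lambda,n)}$ is $G$-weakly convergent to zero.

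The step I expect to require the most care is the passage to weak limits while keeping track of the two nested extractions: one must first fix a subsequence along which $S_n$ converges to its limit superior, and only then extract a weakly convergent sub-subsequence of the near-maximizers, checking both that the value $\varlimsup_n S_n$ is preserved (it is, being a genuine limit along the first subsequence, hence along every sub-subsequence) and that the weak limit is a bona fide element of $D[(r^L_{i(L,n)})]$, so that $\|w\|_X\le p_{L+1}$ applies. Once this bookkeeping is in place, the desired estimate follows immediately from $\|\phi\|_{X^*}\le 1$ and the characterization of $p_{L+1}$.
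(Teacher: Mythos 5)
Your proposal is correct and follows essentially the same route as the paper: extract a subsequence realizing the limit superior together with near-maximizing dislocations, use boundedness (isometry) plus reflexivity to obtain a weak limit, identify that limit as an element of $D[(r^L_{i(L,n)})]$ so its norm is at most $p_{L+1}\le 2\|\bbl{w}{L+1}\|_X$, and handle $L=\Lambda<\infty$ via $p_{\Lambda+1}=0$. The only cosmetic difference is that the paper splits off the trivial case $\gamma_L(\phi)=0$ and takes a subsequence on which the pairing converges exactly to $\gamma_L(\phi)$, whereas you run the two extractions explicitly; the content is identical.
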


\begin{proof}[Proof of Claim]
Let $\phi \in U\0$ and set 
$$
\gamma_L(\phi) \coloneqq 
\varlimsup_{n\toinfty} \sup_{g \in G } \qty|\la  \phi,  g^{-1} r^L_{i(L,n)}\ra |.
$$
If $\Lambda$ is finite and $L=\Lambda$,  then by the definition of $\Lambda$, we have  $D[(r^{\Lambda}_{i(\Lambda,n)})]=\{0\}$, which means that there are no more nontrivial profiles and that $r^{\Lambda}_{i(\Lambda,n)} \to 0$ $G$-weakly in $X$. Hence by the definition of $G$-weak convergence, we obtain $\gamma_L(\phi)=0$. 

Nextly, we suppose that $\Lambda = +\infty$ and that $0 \le L < \Lambda$.  
If $\gamma_L(\phi)=0$,  then the conclusion is clearly true. 
So we suppose $\gamma_L(\phi)>0$.
Then there exist a subsequence $(i(L,n_j))_j\preceq (i(L,n))$
and $(g_{j} )_j\subset G$ such that 
\begin{equation}
\label{202103230210}
\qty| \la  \phi, g_{j}^{-1} r^L_{i(L,n_j)} \ra | = \gamma_L(\phi)+o(1) \quad \mbox{as} \ j\toinfty.
\end{equation}
Since  all of $g_{j}$ are isometries, it follows that 
$$
\|g_{j}^{-1} r^L_{i(L,n_j)}\|_{X}
= \|r^L_{i(L,n_j)}\|_{X}
\le  \sup_{n\ge 0} \| u_{n} \|_X + \sum_{l=0}^L \|\bbl{w}{l} \|_X <\infty.
$$
So passing to a subsequence, still denoted by $(i(L,n_j))$,  
there exists a weak limit $v\in X$ such that 
$g_{j}^{-1} r^L_{i(L,n_j)} \to v$ weakly. 
It follows from~\eqref{202103230210} that 
$$
0<\gamma_L(\phi) = \lim_{j\toinfty} \qty| \la  \phi, g_{j}^{-1} r^L_{i(L,n_j)} \ra |  
= | \la  \phi, v \ra | 
\le \|v\|_{X}. 
$$
Hence one gets $v \neq 0$. 

From the definition of 
$p_{L+1}=p[(r^{L}_{i(L,n)})]>0$, follows 
$$
\|v\|_{X} \le p_{L+1} \le 2\|\bbl{w}{L+1}\|_{X}.
$$
Therefore, we get  
$$
\gamma_L(\phi) \le \|v\|_{X} \le 2\|\bbl{w}{L+1}\|_{X}.
$$
Thus the claim is proved. 
\end{proof}
\end{step}

\begin{step}[Diagonal argument]
At this stage, we shall take the diagonal sequence of 
a family of subsequences $(i(l,n)) \s (l \in \NN^{<\Lambda+1}, n \in \NN)$ 
when $\Lambda = +\infty$, say $(i(n,n))$. 
But we should pay careful attention to the well-definedness of 
corresponding dislocations  $\dsl{g}{l}{i(n,n)}\in G$.
The point is that 
the diagonal sequence except for the first $l$ terms, that is  $(i(n,n))_{n \ge l}$,  
is a subsequence of $(i(l,j))_{j\ge l}$. 
In other words, for each $l \in \NN^{<\Lambda +1}$, 
one can find an order preserving mapping 
$\Phi_l : \Z_{\ge l} \to \Z_{\ge l}$
such that 
$$
i(n,n) = i(l, \Phi_l(n)), \quad n \ge l,
$$
which means that $\Phi_l$ pulls $(i(n,n))_{n \ge l}$
back to $(i(l,j))_{j \ge l}$.

Also note that the first $l$ terms of $(i(n,n))_{n\ge 0}$ are not necessarily included into the set $\{i(l,j); \ j\ge 0\}$. 
This observation implies that 
for $l\in \NN^{<\Lambda+1}$ fixed, 
the dislocations $\dsl{g}{l}{i(n,n)} \in G$ 
are well-defined (respectively, ill-defined)
if $n \ge l$ (respectively, $n < l$), 
since $\dsl{g}{l}{i(n,n)}$ are defined originally along the indices $i(l,j)$. 
This fact causes some complicated restrictions on the indices appearing in \cref{theorem;ProDeco in X}. 
By the way, one can make $\dsl{g}{l}{i(n,n)}$ ($l \in \NN^{<\Lambda +1}$ and $n <l$) well-defined by resetting trivially 
$\dsl{g}{l}{i(n,n)} = {\rm Id}_{X}$ for $ n < l$. 
However, this does not make any drastic improvements compared to our theorem. 
\end{step}

\begin{step}[Conclusion]
Set for $n\in \NN$, 
\begin{equation*}
    N(n) \coloneqq 
    \begin{cases}
        i(\Lambda, n) &\mbox{if}  \ \Lambda <+\infty, \\
        i(n,n) &\mbox{if} \ \Lambda =+\infty.
    \end{cases}
\end{equation*}
By construction, one can immediately check that 
$\dsl{g}{0}{N(n)} = {\rm Id}_{X}$ $(n\ge 0)$. 
For each $l\in \NN^{<\Lambda+1}$ fixed,  
take  an order preserving injection  
$\Phi_l:\Z_{\ge l} \to \Z_{\ge l}$ such that 
$$
N(n)=i(l,\Phi_l (n)), \quad n\ge l.
$$
Take $l\neq k \in \NN^{<\Lambda+1}$ and 
set $\mu\coloneqq \max (l,k)$.
Then it follows from \eqref{202103230070} that 
$$
\dsl{g}{l}{N(n)}^{-1} \dsl{g}{k}{N(n)}
=
\dsl{g}{l}{i(\mu,\Phi_\mu(n))}^{-1} \dsl{g}{k}{i(\mu,\Phi_\mu(n))}
\rightharpoonup 0 \quad (n\toinfty).
$$
In the same way, for any $l\in\NN^{<\Lambda+1}$,  one obtains that 
$$
\dsl{g}{l}{N(n)}^{-1} u_{N(n)} 
=\dsl{g}{l}{i(l,\Phi_l(n))}^{-1} u_{i(l,\Phi_l(n))}
\to \bbl{w}{l} \quad \mbox{weakly}  \  (n\toinfty).
$$

Set the $L$-th residual term as 
$$
r^L_{N(n)} \coloneqq u_{N(n)}-\sum_{l=0}^L \dsl{g}{l}{N(n)} \bbl{w}{l}, \quad L \in \NN^{<\Lambda+1}, 
$$
which is well-defined if $n\ge L$.
Then~\eqref{eq;ProDeco in X 1} immediately follows from~\eqref{202103230100}.
We pass to the limit in~\eqref{202103230200} along the subsequence $(N(n))$  to get 
$$
\sup_{\phi\in U}\varlimsup_{n\toinfty}\sup_{g\in G }
\qty|\la  \phi, g^{-1}r^L_{N(n)} \ra | 
\le
\begin{cases}
2 \|\bbl{w}{L+1}\|_{X} &(0 \le L < \Lambda \le \infty), \\ 
0 &(L=\Lambda <\infty). 
\end{cases}
$$
Thus if either $\| \bbl{w}{l} \|_X \to 0$ as $l\to \Lambda = \infty$ or else $\Lambda < \infty$, then we obtain 
$$
\lim_{L\to \Lambda}\sup_{\phi\in U}\varlimsup_{n\toinfty}\sup_{g\in G}
\qty|\la  \phi, g^{-1}r^L_{N(n)} \ra | 
=0.
$$
\end{step}
From Steps~1--6, the proof is totally complete. 
\end{proof}

\section[PD in Sobolev spaces]{Profile decomposition in inhomogeneous Sobolev spaces}\label{section;ProDecoInhomSobolev}
\subsection{Settings}
Main targets in this paper are Sobolev spaces, for which theorems of profile decomposition will be  established.
Let $\Omega$ be an open set in $\RN$ $(N\in\N)$. 
For $1<p<\infty$, $N\in\N$ and $m\in\NN$, we denote by $W^{m,p}(\Omega)$ the inhomogeneous Sobolev space defined as the set of functions in  $L^p(\Omega)$ whose distributional derivatives with order up to $m$ are also in $L^p(\Omega)$. The norm of $W^{m,p}(\Omega)$ is denoted by 
$$
\|u\|_{W^{m,p}(\Omega)}=\l( \sum_{|\alpha|\le m} 
\| \d^{\alpha} u\|_{L^p(\Omega)}^p \r)^{1/p},
$$
where for a multi-index $\alpha = (\alpha_1, \ldots,\alpha_N) \in (\NN)^N$,  
$$
|\alpha|=\sum_{n=1}^N \alpha_n \mbox{\quad and\quad}
\d^\alpha = \frac{\d^{|\alpha|}}{\d x_1^{\alpha_1}\cdots \d x_N^{\alpha_N}},
$$
and derivatives are in the sense of distributions.
We denote by $\ppm$ the Sobolev critical exponent, that is, 
$\ppm=pN/(N-mp)$ if $N>mp;$ $ \ppm =\infty$ if $N\le mp$.
The reader ought to be careful in reading~\cite{T-01} 
    because Tintarev uses other notation for Sobolev spaces. 
    He denotes the Sobolev spaces defined as above by $H^{m,p}(\Omega)$.

We next provide \emph{actions of dislocations} defined on function spaces. 

\begin{definition}\label{definition;dislocation of Sobolev}
Define a group action on $L^1_{\rm loc}(\RN)$ as follows: 
\begin{align*}
&G[\ZN]
\coloneqq  
\qty{
g[y] : L^1_{\rm loc}(\RN) \to L^1_{\rm loc}(\RN) 
\left| \ 
\begin{aligned}
&g[y] u(\cdot) \coloneqq  u(\cdot - y), \\ 
&u \in L^1_{\rm loc}(\RN), \ y \in \ZN 
\end{aligned}
\right.
}. 
\end{align*}
This is a group of bijective isometries on the Sobolev space $\Wmp$  defined as above:
\begin{equation*}
\|g[y] u\|_{\Wmp} = \|u\|_{\Wmp},  \quad  u\in\Wmp, \ y\in\ZN. 
\end{equation*}
Moreover, inverse mappings 
are given as follows.
For $g[y],g[z]\in G[\ZN]$,  $u\in \Wmp$,  
\begin{align*}
g[y]^{-1}u(\cdot)&=u(\cdot+y)=g[-y]u(\cdot), \\
g[z]^{-1} g[y] u (\cdot) &= u(\cdot-(y-z))=g[y-z]u(\cdot).
\end{align*}
\end{definition}

\begin{remark}
\rm 
One can replace $\ZN$ with $\RN$ in the above dislocations and all of the following theorems. 
\end{remark}

The following lemma characterizes the operator-weak convergence of dislocations on Sobolev spaces. 

\begin{lemma}\label{lemma;Sobolev dsl weak conv}
Let $1< p<\infty$ and let  $(g[y_n])$ and $(g[z_n])$ be  
sequences in $G[\ZN]$. As bounded operators 
on $\Wmp$, the operator-weak convergence $g[y_n]\rightharpoonup 0$ is 
equivalent to $|y_n|\toinfty$,  and $g[z_n]^{-1} g[y_n] \rightharpoonup 0$
is equivalent to $|y_n-z_n|\toinfty$.
\end{lemma}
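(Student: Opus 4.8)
The plan is to reduce the whole statement to the elementary fact that translations escaping to spatial infinity converge weakly to zero in $L^p(\RN)$, then to lift this to $W^{m,p}(\RN)$ through the standard representation of its dual, and finally to read off the second equivalence from the composition formula $g[z]^{-1}g[y]=g[y-z]$ recorded in \cref{definition;dislocation of Sobolev}. Throughout I treat $g[y_n]\rightharpoonup 0$ by fixing $u$ and showing $g[y_n]u\rightharpoonup 0$ weakly.

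First I would establish the core claim: for $f\in L^p(\RN)$ with $1<p<\infty$, one has $g[y_n]f=f(\cdot-y_n)\rightharpoonup 0$ weakly in $L^p(\RN)$ whenever $|y_n|\toinfty$. Since $\|g[y_n]f\|_{L^p(\RN)}=\|f\|_{L^p(\RN)}$ is bounded, a routine density argument reduces testing to $\psi\in C_c(\RN)$, and for such $\psi$ with $\supp\psi\subset K$ compact I would change variables to write $\int f(x-y_n)\psi(x)\dx=\int_{K-y_n}f(z)\psi(z+y_n)\,dz$ and bound this by $\|f\|_{L^p(K-y_n)}\|\psi\|_{L^{p'}(\RN)}$. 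Because $|y_n|\toinfty$, the translated supports $K-y_n$ are eventually contained in the complement of any fixed ball $\{|z|\le R\}$, so $\int_{K-y_n}|f|^p\le\int_{|z|>R}|f|^p\to 0$ as $R\toinfty$ since $f\in L^p(\RN)$; hence the tested integrals vanish.

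Next I would prove the forward implication $|y_n|\toinfty\Rightarrow g[y_n]\rightharpoonup 0$ in $W^{m,p}(\RN)$. Fixing $u\in W^{m,p}(\RN)$ and using $\partial^\alpha(g[y_n]u)=g[y_n]\partial^\alpha u$, I would test against an arbitrary $F\in(W^{m,p}(\RN))^*$, which by the standard representation of the dual of a Sobolev space has the form $F(v)=\sum_{|\alpha|\le m}\int v_\alpha\,\partial^\alpha v$ with $v_\alpha\in L^{p'}(\RN)$; then $F(g[y_n]u)=\sum_{|\alpha|\le m}\int v_\alpha\,g[y_n]\partial^\alpha u\to 0$ by the core claim applied to each $f=\partial^\alpha u\in L^p(\RN)$. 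For the converse I would argue by contraposition: if $|y_n|\not\toinfty$, then a subsequence stays in a bounded subset of $\ZN$ and hence takes some value $y_*$ infinitely often; along that sub-subsequence $g[y_n]u=g[y_*]u$ is constant, and since $g[y_*]$ is an isometry of $W^{m,p}(\RN)$ we have $g[y_*]u\neq 0$ for $u\neq 0$, so $g[y_n]u$ cannot converge weakly to $0$, i.e. $g[y_n]\not\rightharpoonup 0$. (For the variant over $\RN$ of the remark, the same conclusion follows by replacing the constant subsequence with a convergent one and invoking the strong continuity of translations in $L^p$.)

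Finally, the second equivalence is immediate: by \cref{definition;dislocation of Sobolev} one has $g[z_n]^{-1}g[y_n]=g[y_n-z_n]$, so applying the first equivalence with $y_n$ replaced by $y_n-z_n$ yields $g[z_n]^{-1}g[y_n]\rightharpoonup 0\iff|y_n-z_n|\toinfty$. I expect the main obstacle to be the clean lifting from $L^p(\RN)$ to $W^{m,p}(\RN)$, namely justifying that weak convergence in the Sobolev norm is detected by the derivatives in $L^p$ via the dual representation, whereas the escaping-support estimate is the genuine analytic heart of the forward direction and the rest is bookkeeping.
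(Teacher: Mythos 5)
Your proof is correct. Note, however, that the paper does not actually prove this lemma: it simply cites \cite[Lemmas~3.1 and~5.1]{T-K}, so your argument supplies the content that the paper outsources. What you write is the standard route and, as far as I can tell, complete: the escaping-support estimate $\l|\int_{K-y_n} f\,\psi(\cdot+y_n)\,dz\r|\le \|f\|_{L^p(K-y_n)}\|\psi\|_{L^{p'}(\RN)}\to 0$ gives weak nullity of translations in $L^p(\RN)$ for $1<p<\infty$; the lift to $\Wmp$ via the representation $F(v)=\sum_{|\alpha|\le m}\int v_\alpha\,\d^\alpha v$ with $v_\alpha\in L^{p'}(\RN)$ is legitimate and is in fact the same device the paper itself uses (citing \cite[Proposition~9.20]{B}) in the proof of \cref{prop;Wmp dislocation space}; the contrapositive for the converse correctly exploits that a bounded subset of $\ZN$ is finite, so a non-escaping sequence has a constant subsequence $g[y_*]$ with $g[y_*]u\neq 0$ for $u\neq 0$; and the second equivalence does reduce to the first via $g[z_n]^{-1}g[y_n]=g[y_n-z_n]$. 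The only point worth making explicit in a final write-up is the density step: uniform boundedness of $\|g[y_n]f\|_{L^p(\RN)}$ plus density of $C_c(\RN)$ in $L^{p'}(\RN)$ (which needs $p>1$, consistent with the hypothesis) is what allows testing against compactly supported functions only.
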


\begin{proof}
See~\cite[Lemmas~3.1  and~5.1]{T-K}.
\end{proof}

Under these settings, Sobolev spaces form dislocation spaces.

\begin{proposition}\label{prop;Wmp dislocation space}
Let $p\in ]1,\infty[$ and let  $N,m\in \N$.
Then $\qty(\Wmp,G[\ZN])$ is a dislocation space.
\end{proposition}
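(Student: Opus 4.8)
The plan is to verify the two defining conditions (i) and (ii) of \cref{def;DislSp} for the isometry group $G[\ZN]$; the fact that $G[\ZN]$ consists of bijective isometries on $\Wmp$ is already recorded in \cref{definition;dislocation of Sobolev}, so nothing remains on that front. The decisive structural feature I would exploit is the discreteness of the lattice $\ZN$: every bounded subset of $\ZN$ is finite, which will let me reduce the relevant subsequences to constant ones.

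First I would translate the hypothesis $g_n \not\rightharpoonup 0$ into a statement about the translation vectors. Writing $g_n = g[y_n]$ with $y_n \in \ZN$, \cref{lemma;Sobolev dsl weak conv} gives that $g[y_n] \rightharpoonup 0$ is equivalent to $|y_n| \toinfty$; hence $g[y_n] \not\rightharpoonup 0$ forces $(|y_n|)$ to admit a bounded subsequence. Along that subsequence the vectors lie in a finite set $\ZN \cap B_{\RN}(R)$, so I can extract a further subsequence $(n_j) \preceq (n)$ on which $y_{n_j} \equiv y_0$ is constant.

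Condition (i) is then immediate: along $(n_j)$ the operators $g[y_{n_j}] = g[y_0]$ form a constant sequence, so they converge operator-strongly (to $g[y_0]$ itself). For condition (ii), the same constant subsequence does the job, since $g[y_0]$ is a single bounded linear operator and bounded operators are continuous from the weak topology to the weak topology; thus $u_{n_j} \to 0$ weakly yields $g[y_0] u_{n_j} \to 0$ weakly. Alternatively, since the adjoints $g[y_{n_j}]^* = g[y_0]^*$ are likewise constant and hence operator-strongly convergent in $\mathcal{B}((\Wmp)^*)$, condition (ii) follows at once from \cref{DslGrpRem}.

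I do not expect any genuine obstacle in the lattice case: the discreteness of $\ZN$ collapses the required subsequences to constant ones, trivializing both the operator-strong convergence in (i) and the weak-to-weak continuity in (ii). The one place that would demand real work is the variant noted in the remark, where $\ZN$ is replaced by $\RN$: there a bounded sequence of shifts can only be extracted to a \emph{convergent} subsequence $y_{n_j} \to y_0$, and one must then invoke strong continuity of translation on $\Wmp$, namely $\|g[y_{n_j}]u - g[y_0]u\|_{\Wmp} \to 0$ for every $u$. This rests on the continuity of translation in $L^p$ for $1 \le p < \infty$ (via density of compactly supported smooth functions) applied to $u$ together with all its distributional derivatives up to order $m$.
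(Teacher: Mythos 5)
Your proposal is correct and follows essentially the same route as the paper's proof: both translate $g[y_n]\not\rightharpoonup 0$ into boundedness of $(y_n)$ via \cref{lemma;Sobolev dsl weak conv}, exploit the discreteness of $\ZN$ to pass to a constant subsequence $y_{n_j}\equiv y_0$, and then read off conditions (i) and (ii) of \cref{def;DislSp}. The only (cosmetic) difference is in condition (ii): the paper verifies operator-strong convergence of the adjoints by the Riesz representation of $(\Wmp)^*$ and continuity of translation in $L^{p'}(\RN)$ --- a computation that is vacuous on the constant subsequence and really only needed for the $\RN$ variant you flag --- whereas you simply use that the single bounded operator $g[y_0]$ is weak-to-weak continuous, which suffices.
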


\begin{proof}
We shall show that $G[\ZN]$ is a dislocation group. 
It is easily checked that for any sequence $(g[y_n])$ and $g[y]$ in $G[\ZN]$, $g[y_n] \to g[y]$ operator-strongly if and only if $|y_n-y|\to 0$ as $n\toinfty$, and thus, one sees that $g[y_n] \to g[y]$ operator-strongly if and only if $g[y_n]^{-1} \to g[y]^{-1}$ operator-strongly. 
Let $(g[y_n])$ be a sequence in $G[\ZN]$ such that 
$g[y_n] \not\rightharpoonup 0$ as $n\toinfty$,  which 
equivalently means that $|y_n| \not\to \infty$ 
due to \cref{lemma;Sobolev dsl weak conv}. 
So $(y_n)$ is a bounded sequence in $\ZN$, and then 
there exist a subsequence, again denoted by $n$, and $y\in\ZN$ such that 
$y_n = y, \  n\in\N$. 
Hence one sees that $g[y_n] \to g[y]$ operator-strongly. 
Finally, we shall show that 
$(g[y_n])^* \to (g[y])^*$ operator-strongly (recall \cref{DslGrpRem}).
Let $u\in\Wmp$ and $\phi\in [\Wmp]^*$.
The dual of Sobolev spaces are completely characterized with the help of the Riesz representation theorem (see~\cite[Proposition~9.20]{B} for instance). 
Take functions $\phi_{\alpha}\in L^{p'}(\RN), \ |\alpha|\le m$,
as in~\cite[Proposition~9.20]{B}.
It follows from the change of variables and the H\"older inequality that 
\begin{align*}
&\| (g[y_n])^*\phi-(g[y])^*\phi \|_{[\Wmp]^*}  \\
&=
\sup_{ \substack{ u\in\Wmp\\ \norm{u}_{\Wmp}=1 } } \qty|\la  (g[y_n])^*\phi-(g[y])^*\phi, u\ra | \\
&=
\sup_{ \substack{ u\in\Wmp\\ \norm{u}_{\Wmp}=1 } }
\qty|\la  \phi, g[y_n]u-g[y]u \ra | \\
&=
\sup_{ \substack{ u\in\Wmp\\ \norm{u}_{\Wmp}=1 } }
\qty|\sum_{|\alpha|\le m} \int_{\RN} \phi_{\alpha} \d^{\alpha} (g[y_n]u-g[y]u)  \,\dd x  | \\
&=
\sup_{ \substack{ u\in\Wmp\\ \norm{u}_{\Wmp}=1 } }
\qty|\sum_{|\alpha|\le m} \int_{\RN} ( g[-y_n]\phi_\alpha -g[-y]\phi_\alpha ) \d^{\alpha} u  \,\dd x  | \\
&\le 
\sup_{ \substack{ u\in\Wmp\\ \norm{u}_{\Wmp}=1 } }
\qty(\sum_{|\alpha|\le m} \|g[-y_n]\phi_\alpha -g[-y]\phi_\alpha\|_{L^{p'}(\RN)} ) \| u \|_{\Wmp} \\
&=
\sum_{|\alpha|\le m} \|g[-y_n]\phi_\alpha -g[-y]\phi_\alpha\|_{L^{p'}(\RN)}. 
\end{align*}
The last term is convergent to zero as $n\toinfty$ 
since $g[-y_n]\phi_\alpha \to g[-y]\phi_\alpha$, $|\alpha|\le m$, strongly in $L^{p'}(\RN)$.
Hence one gets 
$(g[y_n])^*\phi-(g[y])^*\phi \to 0$ in $[\Wmp]^*$, 
whence follows $(g[y_n])^* \to (g[y])^* $ operator-strongly.
This completes the proof.
\end{proof}

Note that~\cref{cor;dsl sp equiv norm} ensures that Sobolev spaces with the above dislocations are dislocation spaces irrelevantly to the choice of equivalent norms that are invariant under translation. 

\medskip
We then recall that $\Wmp$ is embedded into some Lebesgue spaces $G[\ZN]$-completely continuously. 
Also, the following lemma is a reformulation of a well-known lemma named Lions' lemma~\cite[Lemma~I,1]{Lions 84 2} in the language of $G$-weak convergence.

\begin{lemma}\label{lemma;G-complete continuity of Wmp}
Let $\qty(\Wmp,G[\ZN])$ be as in \cref{prop;Wmp dislocation space}.
Then for any $q\in]p,\ppm[$,  the continuous embedding 
$\Wmp\hookrightarrow L^q(\RN)$
is $G[\ZN]$-completely continuous.
Indeed, there exists a constant $C>0$ such that 
\begin{equation*}
\int_{\RN} |u(x)|^q  \,\dd x  \le C \|u\|_{\Wmp}^p
\l( \sup_{y\in\ZN} \int_{y+Q} |u(x)|^q  \,\dd x   \r)^{1-p/q}
\end{equation*}
for all $u \in \Wmp$,  where $Q\coloneqq ]0,1[^N$.  
Moreover, 
if $u_n\to 0$ $G[\RN]$-weakly in $\Wmp$, then 
\begin{equation*}
\sup_{y\in\ZN} \int_{y+Q} |u_n(x)|^q  \,\dd x  \to 0.
\end{equation*}
\end{lemma}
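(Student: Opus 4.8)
The plan is to establish the two auxiliary estimates first (the interpolation inequality and the vanishing of the cube supremum), and then to read off $G[\ZN]$-complete continuity as an immediate consequence. Throughout I write $C_y \coloneqq y+Q$ for $y\in\ZN$, so that the cubes $C_y$ tile $\RN$ up to a null set.

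\textbf{The interpolation inequality.} The first step is to localize to the unit cubes. On the reference cube $Q$ the classical Sobolev embedding $\Wmp(Q)\hookrightarrow L^q(Q)$ holds, since $Q$ is a bounded Lipschitz domain and $p<q<\ppm$ (in the case $N\le mp$ one has $\ppm=\infty$, and the embedding into $L^q(Q)$ holds for every finite $q$); by translation invariance the embedding constant on each $C_y$ coincides with that on $Q$, giving a constant $C$ uniform in $y$. Raising this embedding to the $p$-th power and writing $\|u\|_{L^q(C_y)}^q=\|u\|_{L^q(C_y)}^{p}\,\|u\|_{L^q(C_y)}^{q-p}$, I obtain on each cube
\[
\int_{C_y}|u|^q \le C\,\|u\|_{W^{m,p}(C_y)}^{p}\left(\int_{C_y}|u|^q\right)^{1-p/q}
\le C\,\|u\|_{W^{m,p}(C_y)}^{p}\left(\sup_{z\in\ZN}\int_{C_z}|u|^q\right)^{1-p/q}.
\]
Summing over $y\in\ZN$ and using $\sum_{y}\|\d^\alpha u\|_{L^p(C_y)}^p=\|\d^\alpha u\|_{L^p(\RN)}^p$ for each multi-index (whence $\sum_y\|u\|_{W^{m,p}(C_y)}^p=\|u\|_{\Wmp}^p$) yields the asserted inequality.

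\textbf{Vanishing of the cube supremum.} This is the step that uses $G[\ZN]$-weak convergence in an essential way, and I expect it to be the conceptual crux. Arguing by contradiction, if $\sup_{y}\int_{C_y}|u_n|^q\not\to 0$, then along a subsequence there exist $\eps>0$ and vectors $y_n\in\ZN$ with $\int_{C_{y_n}}|u_n|^q\ge \eps$. Setting $g_n\coloneqq g[y_n]\in G[\ZN]$ and changing variables, I get $\int_{C_{y_n}}|u_n|^q=\int_Q|g_n^{-1}u_n|^q=\|g_n^{-1}u_n\|_{L^q(Q)}^q$. By the definition of $G[\ZN]$-weak convergence applied to the sequence $(g_n)$, we have $g_n^{-1}u_n\to 0$ weakly in $\Wmp$; restricting to $Q$ and invoking the Rellich--Kondrachov compact embedding $\Wmp(Q)\hookrightarrow\hookrightarrow L^q(Q)$ (valid since $q<\ppm$ and $Q$ is bounded) forces $g_n^{-1}u_n\to 0$ strongly in $L^q(Q)$, which contradicts the lower bound $\eps$.

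\textbf{Complete continuity.} Finally, given $u_n\to u$ $G[\ZN]$-weakly, I would apply the vanishing step to $v_n\coloneqq u_n-u$, which converges $G[\ZN]$-weakly to $0$, to obtain $\sup_y\int_{C_y}|v_n|^q\to 0$. Since $G$-weak convergence implies ordinary weak convergence (\cref{remark of G-weak conv}), the sequence $(v_n)$ is bounded in $\Wmp$; feeding this into the interpolation inequality gives $\|v_n\|_{L^q(\RN)}^q\le C\,(\sup_n\|v_n\|_{\Wmp})^p\,(\sup_y\int_{C_y}|v_n|^q)^{1-p/q}\to 0$, that is, $u_n\to u$ strongly in $L^q(\RN)$, which is exactly the $G[\ZN]$-complete continuity of the embedding. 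The only genuinely delicate point is the translation-invariance bookkeeping: both the uniformity of the cube constant in the first step and the reduction of a far-away cube $C_{y_n}$ to the fixed cube $Q$ in the second step rest on the isometric action of $G[\ZN]$, and it is this invariance, together with Rellich--Kondrachov, that converts $G$-weak convergence into strong $L^q$ control.
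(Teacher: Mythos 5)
Your proposal is correct. The paper itself gives no argument here, deferring entirely to \cite{T-01,T-K}, and your proof is precisely the standard one found there: the unit-cube decomposition with the local Sobolev embedding and the splitting $\|u\|_{L^q(C_y)}^q=\|u\|_{L^q(C_y)}^{p}\|u\|_{L^q(C_y)}^{q-p}$ for the interpolation inequality, and translation back to the fixed cube $Q$ combined with Rellich--Kondrachov for the vanishing of the cube supremum, so no further comment is needed.
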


\begin{proof}
See~\cite{T-01,T-K}.
\end{proof}

Generalizing the above lemma, one observes the vanishing of lower order derivatives of $u_n$ when $u_n$ goes to zero $G[\ZN]$-weakly.

\begin{lemma}\label{lemma;G-complconti-lowerorderderiv}
Let $\qty(\Wmp,G[\ZN])$ be as in \cref{prop;Wmp dislocation space}.
Then for any multi-index $\alpha\in(\NN)^N$ with $|\alpha|<m$ and
$q\in]p,p^*_{m-|\alpha|}[$, the continuous linear operator  
$\d^{\alpha}: \Wmp\hookrightarrow L^q(\RN)$
is $G[\ZN]$-completely continuous.
Indeed, there exists a constant $C>0$ such that 
\begin{equation*}
\int_{\RN} |\d^{\alpha}u(x)|^q  \,\dd x  \le C \|u\|_{\Wmp}^p
\l( \sup_{y\in\ZN} \int_{y+Q} |\d^{\alpha}u(x)|^q  \,\dd x   \r)^{1-p/q}
\end{equation*}
for all $u \in \Wmp$,  where $Q\coloneqq ]0,1[^N$.  
Moreover,
if $u_n \to0$ $G[\RN]$-weakly in $\Wmp$, then 
\begin{equation*}
\sup_{y\in\ZN} \int_{y+Q} |\d^{\alpha}u_n(x)|^q  \,\dd x  \to 0.
\end{equation*}
\end{lemma}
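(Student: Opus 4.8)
The plan is to reduce everything to the already-established case $\alpha=0$, namely \cref{lemma;G-complete continuity of Wmp}, by regarding $\d^\alpha u$ as an element of the lower-order Sobolev space $W^{k,p}(\RN)$ with $k\coloneqq m-|\alpha|\ge 1$ (here $k\ge 1$ since $|\alpha|<m$). First I would record that $\d^\alpha$ maps $\Wmp$ boundedly into $W^{k,p}(\RN)$: for $u\in\Wmp$ and $v\coloneqq \d^\alpha u$, every derivative $\d^\beta v=\d^{\beta+\alpha}u$ with $|\beta|\le k$ is a derivative of $u$ of order $|\beta+\alpha|\le m$, so summing the $L^p$-norms over $|\beta|\le k$ gives $\|v\|_{W^{k,p}(\RN)}\le\|u\|_{\Wmp}$. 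Note also that the critical exponent attached to $W^{k,p}$ is $p^*_k=p^*_{m-|\alpha|}$, which is exactly the upper endpoint of the admissible range for $q$ in the statement, and that $(W^{k,p}(\RN),G[\ZN])$ is a dislocation space by \cref{prop;Wmp dislocation space} (with $m$ replaced by $k$).

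The interpolation inequality then follows immediately. Since $q\in]p,p^*_k[$, \cref{lemma;G-complete continuity of Wmp} applied to $v$ in $W^{k,p}(\RN)$ yields
$$\int_{\RN}|v(x)|^q\dx\le C\|v\|_{W^{k,p}(\RN)}^p\l(\sup_{y\in\ZN}\int_{y+Q}|v(x)|^q\dx\r)^{1-p/q},$$
and bounding $\|v\|_{W^{k,p}(\RN)}\le\|u\|_{\Wmp}$ delivers the claimed estimate for $\d^\alpha u$.

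For the $G[\ZN]$-complete continuity, the key point is that translations commute with differentiation, i.e.\ $g[y]^{-1}\d^\alpha u=\d^\alpha(g[y]^{-1}u)$ for every $y\in\ZN$ (immediate from $g[y]u=u(\cdot-y)$). Hence, if $u_n\to 0$ $G[\ZN]$-weakly in $\Wmp$, then for any sequence $(g_n)$ in $G[\ZN]$ one has $g_n^{-1}\d^\alpha u_n=\d^\alpha(g_n^{-1}u_n)\rightharpoonup 0$ in $W^{k,p}(\RN)$, because $\d^\alpha\colon\Wmp\to W^{k,p}(\RN)$ is bounded linear, hence weak-to-weak continuous, while $g_n^{-1}u_n\rightharpoonup 0$ in $\Wmp$ by hypothesis. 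This shows $\d^\alpha u_n\to 0$ $G[\ZN]$-weakly in $W^{k,p}(\RN)$, and \cref{lemma;G-complete continuity of Wmp} then gives both $\sup_{y\in\ZN}\int_{y+Q}|\d^\alpha u_n|^q\dx\to 0$ and $\d^\alpha u_n\to 0$ strongly in $L^q(\RN)$. Complete continuity for a general $G[\ZN]$-weak limit $u$ follows by applying this to $u_n-u$ and invoking linearity of $\d^\alpha$.

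There is no serious obstacle here; the argument is essentially the index shift $m\mapsto m-|\alpha|$. The only points requiring care are the commutation identity $g[y]^{-1}\d^\alpha=\d^\alpha g[y]^{-1}$ and the transfer of $G[\ZN]$-weak convergence through the bounded operator $\d^\alpha$. Once these are recorded, everything reduces to the previously proved lemma, so the main work is merely checking that the exponent range and the dislocation-space structure are inherited by $W^{k,p}(\RN)$, which they are.
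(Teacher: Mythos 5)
Your proof is correct, but it takes a different route from the paper's. The paper's own proof (which is only one line) works locally: it combines the preceding lemma with the compactness of the Sobolev embedding $\d^{\alpha}\colon W^{m,p}(Q)\to L^p(Q)$ on the unit cube, i.e.\ it essentially re-runs the Lions-type cube-by-cube argument at the level of $\d^\alpha u$ --- the interpolation inequality is assembled by summing the local Gagliardo--Nirenberg estimates over $y\in\ZN$, and the compact embedding on $Q$ is what forces $\sup_{y}\int_{y+Q}|\d^\alpha u_n|^q\dx\to 0$ under $G[\ZN]$-weak convergence. You instead perform a global reduction: the index shift $m\mapsto k=m-|\alpha|$ turns the statement into a literal instance of \cref{lemma;G-complete continuity of Wmp} applied to $v=\d^\alpha u\in W^{k,p}(\RN)$, using that $\|\d^\alpha u\|_{W^{k,p}(\RN)}\le\|u\|_{\Wmp}$, that $p^*_k=p^*_{m-|\alpha|}$, and that $\d^\alpha$ commutes with translations and is weak-to-weak continuous, so $G[\ZN]$-weak convergence is transported from $\Wmp$ to $W^{k,p}(\RN)$. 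Your version buys a cleaner, fully self-contained deduction that needs no further compactness input beyond what was already proved for $\alpha=0$; the paper's version is the one that generalizes more directly to situations where $\d^\alpha u$ does not sit in a full lower-order Sobolev space. All the points needing care in your argument (injectivity of $\beta\mapsto\beta+\alpha$ in the norm comparison, the commutation $g[y]^{-1}\d^\alpha=\d^\alpha g[y]^{-1}$, boundedness of $G$-weakly convergent sequences) are correctly handled.
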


\begin{proof}
The preceding lemma and the Sobolev compact embedding 
$\d^{\alpha}: W^{m,p}(Q)\to L^p(Q)$
yield the conclusion.
\end{proof}

A further generalization of the above lemmas reads: 

\begin{lemma}\label{lemma;G-complconti-lowerorderderiv2}
Let $\qty(\Wmp,G[\ZN])$ be as in \cref{prop;Wmp dislocation space} and let $k\in\NN$ be such that $k<m$. 
Then for all $q\in ]p,p^*_{m-k}[$, the continuous embedding 
$W^{m,p}(\RN)\hookrightarrow W^{k,q}(\RN)$ is $G[\ZN]$-completely continuous.
\end{lemma}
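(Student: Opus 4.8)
The plan is to reduce the statement to the single-derivative result already established in \cref{lemma;G-complconti-lowerorderderiv}. By definition of the norm, one has for every $v \in W^{k,q}(\RN)$ that $\|v\|_{W^{k,q}(\RN)}^q = \sum_{|\alpha|\le k} \|\d^\alpha v\|_{L^q(\RN)}^q$, a \emph{finite} sum. Hence it suffices to show that for each multi-index $\alpha$ with $|\alpha|\le k$ the operator $\d^\alpha : \Wmp \to L^q(\RN)$ is $G[\ZN]$-completely continuous; the claim for the embedding then follows by summing finitely many strongly convergent terms.

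First I would verify that \cref{lemma;G-complconti-lowerorderderiv} indeed applies to each such $\alpha$. Since $k<m$, every $\alpha$ with $|\alpha|\le k$ satisfies $|\alpha|<m$, which is the order restriction required there. It remains to check the exponent condition $q\in ]p, p^*_{m-|\alpha|}[$. The map $j\mapsto p^*_j = pN/(N-jp)$ is strictly increasing in $j$ where it is finite, so from $|\alpha|\le k$ one gets $m-|\alpha|\ge m-k$ and therefore $p^*_{m-|\alpha|}\ge p^*_{m-k}$ (with the convention $p^*_j=\infty$ once $N\le jp$). Consequently $]p, p^*_{m-k}[ \subseteq ]p, p^*_{m-|\alpha|}[$, and the standing hypothesis $q\in ]p, p^*_{m-k}[$ guarantees $q\in ]p, p^*_{m-|\alpha|}[$ for every admissible $\alpha$ simultaneously.

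With both hypotheses checked, \cref{lemma;G-complconti-lowerorderderiv} yields that $\d^\alpha : \Wmp \to L^q(\RN)$ is $G[\ZN]$-completely continuous for each $|\alpha|\le k$. Thus, whenever $u_n \to u$ $G[\ZN]$-weakly in $\Wmp$, one has $\d^\alpha u_n \to \d^\alpha u$ strongly in $L^q(\RN)$ for every such $\alpha$, and summing over the finitely many multi-indices with $|\alpha|\le k$ gives $\|u_n-u\|_{W^{k,q}(\RN)}\toinfty 0$, i.e.\ $u_n\to u$ strongly in $W^{k,q}(\RN)$. By \cref{definition;G-compl conti} this is exactly the $G[\ZN]$-complete continuity of the embedding $\Wmp\to W^{k,q}(\RN)$.

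I do not expect any serious obstacle here: the argument is a purely organizational reduction to the previous lemma. The only point that genuinely demands care is the monotonicity of the critical exponents $j\mapsto p^*_j$, which is precisely what allows the single fixed range $]p, p^*_{m-k}[$ to serve for all derivatives of order at most $k$ at once; a careless treatment of the boundary case $N\le (m-|\alpha|)p$, where $p^*_{m-|\alpha|}=\infty$, should also be noted but causes no difficulty since it only enlarges the admissible range.
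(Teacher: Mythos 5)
Your argument is correct and is essentially the paper's own proof, which simply says to combine \cref{lemma;G-complete continuity of Wmp,lemma;G-complconti-lowerorderderiv} with the monotonicity $p^*_{m_1}\le p^*_{m_2}$ for $m_1\le m_2$; you have merely spelled out the finite summation over multi-indices $|\alpha|\le k$ that the paper leaves implicit. (Minor slip: near the end you wrote ``$\|u_n-u\|_{W^{k,q}(\RN)}\toinfty 0$'' where you clearly mean the norm tends to $0$.)
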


\begin{proof}
Use the preceding lemmas and the increasing property of the Sobolev critical exponent: $p^*_{m_1}\le p^*_{m_2}$ if $m_1 \le m_2$.
\end{proof}

\subsection{Main theorem}
We are in a position to state a theorem of profile decomposition in the \emph{inhomogeneous} Sobolev space $\Wmp$. The following theorem indicates that every bounded sequence in $\Wmp$ has a `fine' profile decomposition in the sense that the residual term is vanishing in $L^q(\RN), \ q \in ]p, \ppm[$. Also, one can obtain energy decompositions in Sobolev and Lebesgue norms.

\begin{theorem}\label{theorem;profile-decomp. in Wmp}
Let $1 < p< \infty$, let $m,N\in\N$ and let $(u_n)$ be a bounded sequence in $\Wmp$.
Then there exist a number $\Lambda \in \N\cup \{ 0,+\infty\}$, 
a subsequence $(N(n)) \subset \NN \ (n \in \NN)$,  
profiles 
$\bbl{w}{l} \in \Wmp \ (l \in \NN^{<\Lambda +1})$,  
vectors
$\dsl{y}{l}{N(n)} \in \ZN \ (l \in \NN^{<\Lambda +1}, \  n \in \Z_{\ge l})$, 
and residual terms $r^L_{N(n)}\in\Wmp$ $(L\in\NN^{<\Lambda+1}, \ n\in\Z_{\ge L})$ with the relation of a double-suffix profile decomposition 
\begin{align*}
u_{N(n)}=\sum_{l=0}^L \bbl{w}{l}(\cdot-\dsl{y}{l}{N(n)})+r^L_{N(n)}, \quad L\in\NN^{<\Lambda+1}, \ n\in\Z_{\ge L}, 
\end{align*}
such that the following hold.
\begin{enumerate}
\setlength{\itemsep}{0mm}
\setlength{\parskip}{0mm}

    \item $\dsl{y}{0}{N(n)}=0 \ (n \ge 0), \quad \bbl{w}{l}\neq 0 \ (1 \le l \in \NN^{<\Lambda +1}).$
    \item $|\dsl{y}{l}{N(n)}-\dsl{y}{k}{N(n)}|\toinfty$
    as $n \toinfty$
    whenever $l\neq k \in \NN^{<\Lambda +1}.$
    \item $u_{N(n)}(\cdot+\dsl{y}{l}{N(n)}) \to \bbl{w}{l}$ 
    weakly in $\Wmp$ \   
    \mbox{and a.e.\  on} $\RN \ (n\toinfty, \ l \in \NN^{<\Lambda +1}).$ 
    \item 
    For $k\in\NN^{<\Lambda+1}$, 
    \begin{equation*}
    r^{L}_{N(n)} (\cdot + \dsl{y}{k}{N(n)}) \to 
    \begin{cases}
    0  &  \mbox{if}  \  k=0,\ldots, L, \\
    \bbl{w}{k}  &  \mbox{if} \ k \ge L+1, 
    \end{cases}
    \end{equation*}
    weakly in $\Wmp$ and a.e.\ on $\RN$ as $n\toinfty$.  
\end{enumerate} 
Moreover, the following hold: 
\begin{align}
&\label{eq;energy estim Wmp}
\varlimsup_{n\toinfty}\|u_{N(n)} \|_{\Wmp}^p \\ 
&\notag 
\ge \sum_{l=0}^\Lambda \|\bbl{w}{l}\|_{\Wmp}^p
+ \varlimsup_{L\to \Lambda} \varlimsup_{R\toinfty} \varlimsup_{n\toinfty} 
\| r^L_{N(n)} \|_{W^{m,p}(\RN \setminus \Anrl ) },
\\
&\lim_{L\to \Lambda}\sup_{\phi\in U}\varlimsup_{n\toinfty}\sup_{y\in\ZN}
\qty|\la \phi, r^L_{N(n)}(\cdot+y)\ra_{\Wmp} |=0, \label{eq;Ishiwata condition Wmp}\\ 
&\varlimsup_{L\to \Lambda}\varlimsup_{n\toinfty}\|r^{L}_{N(n)}\|_{\Wmp} \le 2 \varlimsup_{n\toinfty}\|u_{N(n)} \|_{\Wmp} < +\infty, \label{eq;residue bounded Wmp}\\
&\lim_{L\to \Lambda}\varlimsup_{n \toinfty} \|\d^\alpha r^L_{N(n)} \|_{L^q(\RN)}=0, \quad  q\in ]p,p^*_{m-|\alpha|}[, \ \alpha\in(\NN)^N, \ |\alpha|<m, \label{eq;residue vanish Wmp} \\
&\lim_{L\to \Lambda}\varlimsup_{n \toinfty} \|r^L_{N(n)} \|_{W^{k,q}(\RN)}=0, \quad  q\in ]p,p^*_{m-k}[, \ k\in\NN, \ k<m, \label{eq;residue vanish Wmp2}, \\ 
&\varlimsup_{n\toinfty} \|u_{N(n)}\|_{L^q(\RN)}^q 
=\sum_{l=0}^\Lambda \|\bbl{w}{l}\|_{L^q(\RN)}^q 
+\lim_{L\to \Lambda}\varlimsup_{n\toinfty}\|r^L_{N(n)}\|_{L^q(\RN)}^q, \quad   q\in [p,\ppm], \label{eq;Brezis Lieb theorem3}
\end{align}
where 
$U \coloneqq B_{[W^{m,p}(\RN)]^*}(1)$ and  
$\Anrl \coloneqq \bigcup_{l=0}^L B( \dsl{y}{l}{N(n)}, R).$
\end{theorem}

We see important remarks: 

\begin{remark} 
\begin{enumerate}
\item 
The meaning of each assertion above is as follows:  
\eqref{eq;energy estim Wmp} indicates that the sum of ($p$-powered) Sobolev norms of all profiles is bounded by  
$\varlimsup_{n\toinfty} \|u_{N(n)} \|_{\Wmp}$, 
which is, so to speak, an \emph{energy estimate} or an \emph{energy decomposition}; \eqref{eq;Ishiwata condition Wmp} implies the completion of performing the profile decomposition and called the exactness condition; \eqref{eq;residue bounded Wmp} ensures that the residual term does not diverge as the number of subtracted dislocated profiles increases; \eqref{eq;residue vanish Wmp} and~\eqref{eq;residue vanish Wmp2} show that lower order derivatives of the residual term are vanishing strongly in appropriate Lebesgue or Sobolev spaces; \eqref{eq;Brezis Lieb theorem3} implies that a basic form of the iterated Brezis-Lieb lemmma holds true, possibly with limiting residual term of positive mass in the case $q=p,\ppm$.

\item When one considers profile decomposition in Sobolev spaces $W^{m,2}(\RN)$, which is a Hilbert space, one should employ \cref{theorem;general profile decomp.} rather than the above theorem, since it provides a more precise energy decomposition for Sobolev norms, i.e., one obtains 
\begin{equation*}
    \varlimsup_{n\toinfty} \|u_{N(n)}\|_{W^{m,2}(\RN)}^2
    = \sum_{l =0}^\Lambda \|\bbl{w}{l}\|_{W^{m,2}(\RN)}^2+ 
    \lim_{L\to \Lambda} \varlimsup_{n\toinfty} \|r^{L}_{N(n)}\|_{W^{m,2}(\RN)}^2. 
\end{equation*}

\item 
Regarding the energy estimate~\eqref{eq;energy estim Wmp}, it is noteworthy that this is a sharper version than other authors' energy decompositions because other types of one do not include the ``residual part'' as in~\eqref{eq;energy estim Wmp}.

\item 
Relations~\eqref{eq;Ishiwata condition Wmp}--\eqref{eq;Brezis Lieb theorem3} are also obtained in the previous researches, e.g., \cite{Gerard,Jaffard,Solimini,T-01,T-K}. 

\end{enumerate}
\end{remark}

In view of applications, to e.g., studies of PDEs or Calculus of Variations, the number of nontrivial profiles $\Lambda$ is often finite and \emph{a priori} lower bounds of norms of profiles are also gained as a consequence of variational structures.  The global compactness results established by Struwe \cite{Struwe} are typical and important examples of the case of finite number profiles. Once $\Lambda$ turns out to be finite in \cref{theorem;profile-decomp. in Wmp}, some of assertions in the theorem become clearer and one can readily show the following corollary. 

\begin{corollary}
Suppose that the same conditions  as in \cref{theorem;profile-decomp. in Wmp} are satisfied. 
In addition, assume that $\Lambda < +\infty$, i.e., 
the number of nontrivial profiles is finite. 
Then regarding the final residual term given by 
\begin{equation*}
r^\Lambda_{N(n)} \coloneqq u_{N(n)} -\sum_{l=0}^\Lambda \bbl{w}{l}(\cdot - \dsl{y}{l}{N(n)}), 
    \quad n \ge \Lambda, 
\end{equation*}
the relations~\eqref{eq;energy estim Wmp}--\eqref{eq;Brezis Lieb theorem3} turn to 
\begin{align}
&\varlimsup_{n\toinfty}\|u_{N(n)} \|_{\Wmp}^p
    \ge \sum_{l=0}^\Lambda \|\bbl{w}{l}\|_{\Wmp}^p
    + \varlimsup_{R\toinfty} \varlimsup_{n\toinfty} 
    \| r^{\Lambda}_{N(n)} \|_{W^{m,p}(\RN \setminus \mathcal{A}_{n,R,\Lambda} ) },\notag\\
&\lim_{n\toinfty} \sup_{y\in \ZN} 
\qty|\la  \phi, r^\Lambda_{N(n)}(\cdot+y) \ra_{\Wmp} |=0, \quad \phi \in U, \label{residue G weak conv Wmp}\\
&\varlimsup_{n\toinfty}\|r^\Lambda_{N(n)}\|_{\Wmp}
\le 2 \varlimsup_{n\toinfty} \|u_{N(n)}\|_{\Wmp}, \notag \\
&\begin{alignedat}{2}
&\lim_{n\toinfty}\|\d^\alpha r^\Lambda_{N(n)}\|_{L^q(\RN)}=0, 
&\quad &q\in ]p,p^*_{m-|\alpha|}[, \ \alpha\in(\NN)^N, \ |\alpha|<m, \notag \\ 
&\lim_{n \toinfty} \|r^\Lambda_{N(n)} \|_{W^{k,q}(\RN)}=0, &\quad  &q\in ]p,p^*_{m-k}[, \ k\in\NN, \ k<m, \notag \\
\end{alignedat}\\
&\varlimsup_{n\toinfty}\|u_{N(n)}\|_{L^q(\RN)}^q 
=\sum_{l=0}^\Lambda \|\bbl{w}{l}\|_{L^q(\RN)}^q
+\varlimsup_{n\toinfty}\|r^\Lambda_{N(n)}\|_{L^q(\RN)}^q, \quad  q\in [p,\ppm],\notag 
\end{align}
where $U \coloneqq B_{[W^{m,p}(\RN)]^*}(1)$ and 
$\mathcal{A}_{n,R,\Lambda}\coloneqq  \bigcup_{l=0}^\Lambda B( \dsl{y}{l}{N(n)}, R).$ 
In particular, the relation~\eqref{residue G weak conv Wmp} implies that the final residual terms are $G[\ZN]$-weakly convergent to zero in $\Wmp$ (cf. \cref{definition;G-weak convergence}). 
\end{corollary}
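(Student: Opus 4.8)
The plan is to read off every assertion directly from \cref{theorem;profile-decomp. in Wmp}, exploiting only the extra hypothesis $\Lambda < +\infty$. The mechanism at work is essentially notational: by the convention fixed in the Notation (item~(iii)), when $\Lambda$ is finite the index set $\NN^{<\Lambda+1}$ is the finite set $\{0,\ldots,\Lambda\}$, and for any real sequence $(a_L)$ one has $\lim_{L\to\Lambda} a_L = \varlimsup_{L\to\Lambda} a_L = a_\Lambda$. Consequently every outer limit (or outer $\varlimsup$) over $L\to\Lambda$ occurring in \eqref{eq;energy estim Wmp}--\eqref{eq;Brezis Lieb theorem3} collapses to evaluation at $L=\Lambda$.

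First I would invoke \cref{theorem;profile-decomp. in Wmp} to produce the profile elements $(\bbl{w}{l}, \dsl{y}{l}{N(n)}, \Lambda)$ together with the six displayed relations, and then substitute $L=\Lambda$ throughout. For \eqref{eq;energy estim Wmp} this turns the outer $\varlimsup_{L\to\Lambda}$ into evaluation at $L=\Lambda$, leaving the inner $\varlimsup_{R\to\infty}\varlimsup_{n\toinfty}$ intact and replacing $\Anrl$ by $\mathcal{A}_{n,R,\Lambda}$; for \eqref{eq;residue bounded Wmp} and \eqref{eq;Brezis Lieb theorem3} it likewise removes the $L$-limit, yielding the stated bound and the Brezis--Lieb identity for $r^\Lambda_{N(n)}$. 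For \eqref{eq;residue vanish Wmp} and \eqref{eq;residue vanish Wmp2}, the collapse of $\lim_{L\to\Lambda}$ leaves $\varlimsup_{n\toinfty}$ of a nonnegative quantity equal to zero, which forces the honest limit $\lim_{n\toinfty}=0$; this gives the strong vanishing of $\d^\alpha r^\Lambda_{N(n)}$ and of $r^\Lambda_{N(n)}$ in the respective Lebesgue and Sobolev spaces.

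The only assertion deserving a separate word is \eqref{residue G weak conv Wmp}. Setting $L=\Lambda$ in the exactness condition \eqref{eq;Ishiwata condition Wmp} gives $\sup_{\phi\in U}\varlimsup_{n\toinfty}\sup_{y\in\ZN}|\la\phi, r^\Lambda_{N(n)}(\cdot+y)\ra|=0$, so that for each fixed $\phi\in(\Wmp)^*$ one has $\lim_{n\toinfty}\sup_{y\in\ZN}|\la\phi, r^\Lambda_{N(n)}(\cdot+y)\ra|=0$. Since $g[y]^{-1}$ acts on $\Wmp$ as the translation $u\mapsto u(\cdot+y)$ (\cref{definition;dislocation of Sobolev}), comparison with \cref{definition;G-weak convergence} identifies this precisely as $r^\Lambda_{N(n)}\to 0$ $G[\ZN]$-weakly in $\Wmp$.

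I expect no genuine obstacle: the corollary is a pure specialization of the main theorem. The single substantive point—the only place where more than bookkeeping is required—is the recognition that the exactness condition degenerates into $G[\ZN]$-weak convergence of the final residue once $\Lambda$ is finite. The remaining care is to apply the convention $\lim_{L\to\Lambda}a_L=a_\Lambda$ uniformly, taking care to preserve the nested $\varlimsup_{R\to\infty}\varlimsup_{n\toinfty}$ structure of \eqref{eq;energy estim Wmp} after dropping only the outermost $L$-limit.
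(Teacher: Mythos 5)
Your proposal is correct and matches the paper's intent exactly: the paper gives no separate proof of this corollary, stating only that it is "readily shown," and the intended argument is precisely the specialization you describe — collapsing every $L\to\Lambda$ limit to evaluation at $L=\Lambda$ via the convention $\lim_{L\to\Lambda}a_L=a_\Lambda$ for finite $\Lambda$, upgrading $\varlimsup_{n}$ of nonnegative vanishing quantities to honest limits, and recognizing the degenerate exactness condition as $G[\ZN]$-weak convergence of the final residue.
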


In applying the profile decomposition theorem to studies of PDEs especially involving the $p$-Laplacian, the Sobolev space $W^{1,p}(\RN)$, $1 < p <\infty$, is often equipped with another norm and a modified theorem corresponding to the new norm is required. 

\begin{corollary}\label{corollary;another inhomogeneous norm}
Under the same conditions as in \cref{theorem;profile-decomp. in Wmp} on $W^{1,p}(\RN)$,  all assertions as in \cref{theorem;profile-decomp. in Wmp} hold true even when the norm of $W^{1,p}(\RN)$ is replaced by 
$$
\|u\|_{W^{1,p}(\RN)}\coloneqq \l( \int_{\RN} |u|^p  \,\dd x  
+ \int_{\RN} |\nabla u|^p  \,\dd x  \r)^{1/p}\!,  
\quad u\in W^{1,p}(\RN),
$$
where $|\nabla u|=\sqrt{|\frac{\d u}{\d x_1}|^2 + \cdots + |\frac{\d u}{\d x_N}|^2}$.
\end{corollary}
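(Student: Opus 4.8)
The plan is to reduce everything to \cref{theorem;profile-decomp. in Wmp} by exhibiting the modified norm as another admissible norm making $W^{1,p}(\RN)$ a dislocation space under $G[\ZN]$, and then to re-examine only those assertions that genuinely see the norm. First I would record that the two norms are pointwise equivalent: since $(\sum_i |\d_i u|^2)^{p/2}$ and $\sum_i |\d_i u|^p$ differ only by constants depending on $N$ and $p$ (equivalence of the $\ell^2$ and $\ell^p$ norms on $\RN$), integration gives $C^{-1}\|u\|_{W^{1,p}(\RN)} \le (\int_{\RN}|u|^p\dx + \int_{\RN}|\nabla u|^p\dx)^{1/p} \le C\|u\|_{W^{1,p}(\RN)}$; hence the modified norm is equivalent to the standard one and, in particular, complete. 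Because translation commutes with the gradient and preserves the Lebesgue integral, $G[\ZN]$ acts by bijective isometries for the modified norm as well. \cref{cor;dsl sp equiv norm} then upgrades \cref{prop;Wmp dislocation space} to the statement that $W^{1,p}(\RN)$ endowed with the modified norm, together with $G[\ZN]$, is again a dislocation space.

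With this in hand, I would observe that the bulk of the assertions are insensitive to the replacement. The qualitative conclusions (i)--(iv) assert weak and almost-everywhere convergence and the operator-weak separation of dislocations, all of which are topological or measure-theoretic and therefore unchanged under passage to an equivalent norm; the exactness condition \eqref{eq;Ishiwata condition Wmp} is produced by the abstract \cref{theorem;ProDeco in X} applied to the new dislocation space (only the dual unit ball $U$ is reinterpreted); and the vanishing of lower-order derivatives \eqref{eq;residue vanish Wmp}, \eqref{eq;residue vanish Wmp2} together with the $L^q$-decompositions \eqref{eq;Brezis Lieb theorem3} depend only on the $G[\ZN]$-complete continuity of the embeddings $W^{1,p}(\RN)\hookrightarrow L^q(\RN)$ and on strong convergence in the target $L^q$, neither of which is affected by changing the equivalent source norm. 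Thus these assertions transfer verbatim.

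The only assertions that truly involve the modified norm are the energy estimate \eqref{eq;energy estim Wmp} and the residue bound \eqref{eq;residue bounded Wmp}. The key structural remark is that the $p$-th power of the modified norm splits as $\int_{\RN} |u|^p\dx + \|\nabla u\|_{L^p(\RN;\RN)}^p$, a sum of a scalar $L^p$-functional and a vector-valued $L^p$-functional with the Euclidean norm on the target. Both pieces obey the Brezis-Lieb lemma and its iterated form, since those rest on the pointwise inequality $\big| |a+b|^p - |a|^p \big| \le \eps|a|^p + C_\eps|b|^p$, which is equally valid when $|\cdot|$ denotes the Euclidean norm on $\RN$. Because distinct dislocations separate ($|\dsl{y}{l}{N(n)} - \dsl{y}{k}{N(n)}| \toinfty$), the translated profiles $\nabla \bbl{w}{l}(\cdot - \dsl{y}{l}{N(n)})$ have asymptotically disjoint bulk, so the cross terms vanish and the same computation that proves \eqref{eq;energy estim Wmp} and \eqref{eq;residue bounded Wmp} in \cref{theorem;profile-decomp. in Wmp} goes through with scalar absolute values replaced by Euclidean norms on the gradient part.

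I expect the main obstacle to be precisely this gradient term: unlike the standard norm, where $\sum_i \|\d_i u\|_{L^p(\RN)}^p$ decouples component by component and each component is handled by a scalar iterated Brezis-Lieb lemma, the Euclidean quantity $(\sum_i |\d_i u|^2)^{p/2}$ couples the components nonlinearly. The point to check carefully is therefore that the iterated Brezis-Lieb lemma and the asymptotic-orthogonality (no-overlap) argument used for the energy decomposition survive for $\RN$-valued maps under the Euclidean norm; granting that --- which follows from the pointwise estimates above applied to vector fields together with the region splitting by $\Anrl$ --- no further difficulty arises.
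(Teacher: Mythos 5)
Your proposal is correct and follows essentially the same route as the paper, whose own proof consists of the single remark that the corollary ``can be proved in the same way as in the proof of \cref{theorem;profile-decomp. in Wmp}.'' You have simply made explicit what that entails: the norm-equivalence reduction via \cref{cor;dsl sp equiv norm} (which the paper itself invokes right after \cref{prop;Wmp dislocation space}), and the observation that the only norm-sensitive steps are the energy estimate and the residue bound, where the scalar convexity and Brezis--Lieb-type inequalities must be (and can be, cf.\ the paper's own statement of~\eqref{202108060010} for $\R^d$-valued arguments) replaced by their Euclidean-norm counterparts applied to the gradient vector.
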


\begin{proof}
The corollary can be proved in the same way as in the proof of \cref{theorem;profile-decomp. in Wmp} which will be given below. 
\end{proof}

\subsection{Proof of \cref{theorem;profile-decomp. in Wmp}}
The proof of \cref{theorem;profile-decomp. in Wmp} will proceed mainly in three steps: (i) finding profiles; (ii) an energy decomposition for the Sobolev norm and the exactness condition of profile decomposition; (iii) vanishing of the residual term.
Throughout this proof, when a sequence $(u_n)$ converges to $u$ weakly in $\Wmp$ and almost everywhere on $\RN$, we denote it by 
\begin{equation*}
u_n \to u \quad \mbox{weakly and a.e.}
\end{equation*}
for short. 

\setcounter{step}{0}
\begin{step}[Finding profiles]
\cref{theorem;ProDeco in X} leads us to the existence of profile elements $(\bbl{w}{l}, \dsl{y}{l}{N(n)},\Lambda) \in \Wmp \times G[\ZN]\times (\N\cup\{0,\infty\})$ $( l \in \NN^{<\Lambda+1}, \ n \in \Z_{\ge l})$. In the proof of \cref{theorem;ProDeco in X}, each profile $\bbl{w}{l}$ is obtained as the weak limit of $u_{i(l,n)}(\cdot + \dsl{y}{l}{i(l,n)})$, where $(i(l,n))$ denotes the $l$-th subsequence for which the $l$-th profile and the $l$-th dislocations are considered. However,  with the help of the Sobolev compact embeddings, the weak convergence $u_n \to u$ in $\Wmp$ also implies the pointwise convergence $u_n \to u$ a.e.\  on $\RN$ up to a subsequence.
Hence by extracting additional subsequences in each step of the proof,  one can obtain both the weak convergence and the pointwise convergence for profiles and residual terms. 
Therefore, according to \cref{theorem;ProDeco in X} we have the following:   
\begin{alignat}{2}
& \dsl{y}{0}{N(n)}=0 &\quad  &(n\ge 0), \notag  \\
&\bbl{w}{l}\neq 0 &\quad &(1 \le l \in \NN^{<\Lambda +1}), \notag  \\
& \l| \dsl{y}{l}{N(n)} - \dsl{y}{k}{N(n)} \r| \toinfty &\quad &(n \toinfty, \ 0 \le k \neq l\in \NN^{<\Lambda +1}), \label{eq;100506}\\
& u_{N(n)}(\cdot+\dsl{y}{l}{N(n)}) \to \bbl{w}{l} 
&\quad &\mbox{weakly and a.e.} \  (0 \le l \in \NN^{<\Lambda +1}).  \label{202103150010}
\end{alignat}

Set the residual term 
$ r^{L}_{N(n)}=u_{N(n)}-\sum_{l=0}^L \bbl{w}{l}(\cdot - \dsl{y}{l}{N(n)})$ for $0 \le L \in \NN^{<\Lambda +1}$ and  $n \in \Z_{\ge L}$. 
Then the triangle  inequality yields 
\begin{equation}
 \sup_{n\ge 0} \|r^L_{N(n)} \|_{\Wmp}
\le 
\sup_{n\ge 0} \|u_{N(n)} \|_{\Wmp} 
+ \sum_{l=0}^L \|\bbl{w}{l}\|_{\Wmp}. 
\label{202103140010}  
\end{equation}
Also, \cref{theorem;ProDeco in X} implies 
\begin{alignat}{2}
& r^{L}_{N(n)}(\cdot+ \dsl{y}{l}{N(n)} ) \to 0
&\quad &\mbox{weakly and a.e.} \ (0 \le l \le L\in \NN^{<\Lambda +1}), \notag  \\ 
& r^{L-1}_{N(n)}(\cdot+\dsl{y}{L}{N(n)}) \to \bbl{w}{L} 
&\quad &\mbox{weakly and a.e.}  \  (1  \le L \in \NN^{<\Lambda+1}). \label{202011180020}
\end{alignat}
\end{step}

The remaining assertions~\eqref{eq;energy estim Wmp}--\eqref{eq;Brezis Lieb theorem3} 
will be proved in the following four lemmas. 

\begin{step}[Energy decomposition and exactness condition]
\begin{lemma}[Estimates~\eqref{eq;energy estim Wmp} and the exactness condition~\eqref{eq;Ishiwata condition Wmp}]
It holds that 
\begin{align}
\label{eq;200197}
&\varlimsup_{n\toinfty}\|u_{N(n)} \|_{\Wmp}^p
\\
&\notag 
\ge \sum_{l=0}^\Lambda \|\bbl{w}{l}\|_{\Wmp}^p
+ \varlimsup_{L\to \Lambda} \varlimsup_{R\toinfty} \varlimsup_{n\toinfty} 
\| r^L_{N(n)} \|^p_{W^{m,p}(\RN \setminus \Anrl  )},
\end{align}
where $\Anrl \coloneqq \bigcup_{l=0}^L B( \dsl{y}{l}{N(n)}, R ). $
Furthermore, it follows that 
\begin{equation} \label{202104300010}
 \lim_{L\to \Lambda}\sup_{\phi\in U}\varlimsup_{n\toinfty}\sup_{y\in \ZN}
\qty|\la  \phi, r^L_{N(n)}(\cdot + y) \ra |=0,
\end{equation}
where $U \coloneqq B_{[\Wmp]^*}(1)$. 
\end{lemma}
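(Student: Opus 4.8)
The plan is to establish the energy estimate~\eqref{eq;200197} first, since the exactness condition~\eqref{202104300010} then follows almost immediately from \cref{theorem;ProDeco in X}. In contrast to the Hilbert case of \cref{theorem;general profile decomp.}, where orthogonality produces an exact Pythagorean identity, here I expect only an \emph{inequality}, obtained by spatial localization rather than by an algebraic identity. The single structural fact I would lean on is that the $\Wmp$-norm is additive in $p$-th powers over disjoint regions: for any measurable partition $\RN = A_0 \sqcup \cdots \sqcup A_L \sqcup B$ one has $\|u\|_{\Wmp}^p = \sum_{l=0}^L \|u\|^p_{W^{m,p}(A_l)} + \|u\|^p_{W^{m,p}(B)}$, because each $\|\d^\alpha u\|_{L^p}^p$ splits additively over disjoint sets.

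First I would fix $L\in\NN^{<\Lambda+1}$ and $R>0$. By the asymptotic separation~\eqref{eq;100506}, for all $n$ large enough the balls $B(\dsl{y}{l}{N(n)},R)$ ($0\le l\le L$) are pairwise disjoint, so taking $A_l = B(\dsl{y}{l}{N(n)},R)$ and $B=\RN\setminus\Anrl$ and using translation invariance of $\d^\alpha$ on each ball gives
\[
\|u_{N(n)}\|_{\Wmp}^p
= \sum_{l=0}^L \| u_{N(n)}(\cdot + \dsl{y}{l}{N(n)}) \|_{W^{m,p}(B(0,R))}^p
+ \|u_{N(n)}\|_{W^{m,p}(\RN \setminus \Anrl)}^p .
\]
On each ball the weak convergence~\eqref{202103150010} restricts to weak convergence in $W^{m,p}(B(0,R))$ (the restriction operator is bounded linear, hence weak-weak continuous), so weak lower semicontinuity of the norm yields $\varliminf_{n\toinfty}\| u_{N(n)}(\cdot+\dsl{y}{l}{N(n)})\|_{W^{m,p}(B(0,R))}^p \ge \|\bbl{w}{l}\|_{W^{m,p}(B(0,R))}^p$. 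For the far-field term I would write $u_{N(n)} = r^L_{N(n)} + \sum_{l=0}^L \bbl{w}{l}(\cdot-\dsl{y}{l}{N(n)})$ on $\RN\setminus\Anrl$ and apply the triangle inequality; since each $\bbl{w}{l}$ is a \emph{fixed} function, its tail $\eps_l(R):=\|\bbl{w}{l}\|_{W^{m,p}(\RN\setminus B(0,R))}$ is independent of $n$ and tends to $0$ as $R\toinfty$.

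Next I would assemble these using $\varlimsup_{n}(a_n+b_n)\ge \varliminf_n a_n + \varlimsup_n b_n$, obtaining for each fixed $L,R$,
\[
\varlimsup_{n\toinfty} \|u_{N(n)}\|_{\Wmp}^p
\ge \sum_{l=0}^L \|\bbl{w}{l}\|_{W^{m,p}(B(0,R))}^p
+ \Big( \varlimsup_{n\toinfty} \|r^L_{N(n)}\|_{W^{m,p}(\RN \setminus \Anrl)} - \sum_{l=0}^L \eps_l(R) \Big)_+^p .
\]
Letting $R\toinfty$, the truncations $\|\bbl{w}{l}\|_{W^{m,p}(B(0,R))}^p$ increase to $\|\bbl{w}{l}\|_{\Wmp}^p$ and the errors $\eps_l(R)$ vanish, leaving $\sum_{l=0}^L\|\bbl{w}{l}\|_{\Wmp}^p + \varlimsup_{R\toinfty}\varlimsup_{n\toinfty}\|r^L_{N(n)}\|_{W^{m,p}(\RN\setminus\Anrl)}^p$; finally letting $L\to\Lambda$, the partial sums converge monotonically to $\sum_{l=0}^\Lambda\|\bbl{w}{l}\|_{\Wmp}^p$, which is exactly~\eqref{eq;200197}.

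For the exactness condition~\eqref{202104300010}, estimate~\eqref{eq;200197} already forces $\sum_{l=0}^\Lambda\|\bbl{w}{l}\|_{\Wmp}^p \le \varlimsup_{n\toinfty}\|u_{N(n)}\|_{\Wmp}^p<\infty$, so when $\Lambda=\infty$ we have $\|\bbl{w}{l}\|_{\Wmp}\to 0$ as $l\to\Lambda$. This is precisely the hypothesis under which \cref{theorem;ProDeco in X} delivers its exactness assertion~\eqref{eq;Ishiwata condition X}; translating that abstract statement to $(\Wmp,G[\ZN])$, where $g[y]^{-1}r^L_{N(n)} = r^L_{N(n)}(\cdot+y)$ and $\sup_{g\in G}$ becomes $\sup_{y\in\ZN}$, gives~\eqref{202104300010} verbatim. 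The hardest part will be the energy estimate, and within it the careful orchestration of the three iterated limits $\varlimsup_{L}\varlimsup_{R}\varlimsup_{n}$ so that the profile tails $\eps_l(R)$ and the lower-semicontinuity losses do not leak across the limits; the absence of any Pythagorean identity is exactly what degrades the would-be equality to the inequality~\eqref{eq;200197}.
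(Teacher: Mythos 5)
Your proposal is correct and follows essentially the same route as the paper: split $\|u_{N(n)}\|_{\Wmp}^p$ over the asymptotically disjoint balls $B(\dsl{y}{l}{N(n)},R)$ and their complement, bound the near-field terms from below by the profiles via weak convergence, bound the far-field term from below by the residual up to the profile tails $\|\bbl{w}{l}\|_{W^{m,p}(\RN\setminus B(0,R))}\to 0$, pass to the limits in the order $n\toinfty$, $R\toinfty$, $L\to\Lambda$, and then deduce the exactness condition from the summability $\sum_{l}\|\bbl{w}{l}\|_{\Wmp}^p<\infty$ together with the ``furthermore'' clause of \cref{theorem;ProDeco in X}. The only (harmless) difference is the technical device: where you invoke weak lower semicontinuity of the restricted norm and the reverse triangle inequality with the $(\,\cdot\,)_+^p$ bookkeeping, the paper applies the pointwise convexity inequality $|a|^p\ge|b|^p+p|b|^{p-2}b(a-b)$ derivative by derivative and controls the cross terms by H\"older.
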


\begin{proof}
Once the energy estimate~\eqref{eq;200197} is obtained, then the exactness condition~\eqref{202104300010} will readily follow owing to \cref{theorem;ProDeco in X}.
Hence we shall prove the energy estimate. 
Fix a multi-index $\alpha \in (\NN)^N$ with $ |\alpha|\le m$.  
The mutual orthogonality condition~\eqref{eq;100506} implies that, for any $L \in \NN^{<\Lambda +1}$, 
\begin{equation*}
\Anrl = \bigsqcup_{l=0}^L B(\dsl{y}{l}{N(n)}, R) 
\quad (\mbox{a disjoint union})
\end{equation*}
for sufficiently large $n \in \N$.
It then follows that  for sufficiently large $n$,  
\begin{align}\label{202104300020}
&\int_{\RN} |\d^\alpha u_{N(n)} |^p  \,\dd x  \\  
&\notag  = \int_{\Anrl}|\d^\alpha u_{N(n)} |^p  \,\dd x   
+ \int_{\RN \setminus \Anrl}|\d^\alpha u_{N(n)} |^p  \,\dd x  \\ 
&\notag  =\sum_{l=0}^L \int_{B(\dsl{y}{l}{N(n)}, R  )  } |\d^\alpha u_{N(n)} |^p  \,\dd x 
+ \int_{\RN \setminus \Anrl}|\d^\alpha u_{N(n)} |^p  \,\dd x   \\ 
&\notag  =\sum_{l=0}^L \int_{B(0, R  )  } |\d^\alpha u_{N(n)} (\cdot + \dsl{y}{l}{N(n)} ) |^p  \,\dd x  
+ \int_{\RN \setminus \Anrl}|\d^\alpha u_{N(n)} |^p  \,\dd x  
\\ 
&\notag  \eqqcolon \sum_{l=0}^L \bbl{I}{l} + II. 
\end{align}
Owing to the convexity of $|\cdot|^p$  and~\eqref{202103150010}, one has  
\begin{align}
\label{202104300030}
\bbl{I}{l} &= 
\int_{B(0, R  )  } |\d^\alpha u_{N(n)} (\cdot + \dsl{y}{l}{N(n)} ) |^p  \,\dd x  \\
&\notag  \ge 
\int_{B(0, R  )  } |\d^\alpha \bbl{w}{l} |^p  \,\dd x 
\\
&\notag \qquad 
+ p  
\int_{B(0, R  )  } |\d^\alpha \bbl{w}{l} |^{p-2} \d^\alpha \bbl{w}{l} \d^\alpha [ u_{N(n)} (\cdot + \dsl{y}{l}{N(n)} )- \bbl{w}{l}  ]   \,\dd x  \\ 
&\notag  =
\int_{B(0, R  )  } |\d^\alpha \bbl{w}{l} |^p  \,\dd x 
+o(1)
\end{align}
as $n\toinfty$. 

On the other hand, due to the convexity again, one has 
\begin{align} \label{202105210010}
II &= 
\int_{\RN \setminus \Anrl}|\d^\alpha u_{N(n)} |^p  \,\dd x  \\ 
&\notag \ge 
\int_{\RN \setminus \Anrl} |\d^\alpha r^L_{N(n)} |^p  \,\dd x  \\  
&\notag \qquad + p \int_{\RN \setminus \Anrl} |\d^\alpha r^L_{N(n)} |^{p-2}
\d^\alpha r^L_{N(n)}  \d^\alpha \l(\sum_{l=0}^L \bbl{w}{l}(\cdot - \dsl{y}{l}{N(n)}) \r)   \,\dd x   \\ 
&\notag = 
\int_{\RN \setminus \Anrl} |\d^\alpha r^L_{N(n)} |^p  \,\dd x  \\  
&\notag \qquad + p \sum_{l=0}^L \int_{\RN \setminus \Anrl} |\d^\alpha r^L_{N(n)} |^{p-2}
\d^\alpha r^L_{N(n)}  \d^\alpha \l( \bbl{w}{l}(\cdot - \dsl{y}{l}{N(n)}) \r)   \,\dd x   \\ 
&\notag \eqqcolon 
\int_{\RN \setminus \Anrl} |\d^\alpha r^L_{N(n)} |^p  \,\dd x   
+ p \sum_{l=0}^L  \bbl{III}{l}.  
\end{align}
From the H\"older inequality and~\eqref{202103140010}, 
it follows that 
\begin{align}\label{202105210020}
|\bbl{III}{l}| 
& \le \| r^L_{N(n)} \|_{\Wmp}^{p-1} \l( 
\int_{\RN \setminus \Anrl} |\d^\alpha \bbl{w}{l}(\cdot - \dsl{y}{l}{N(n)}) |^p  \,\dd x  \r)^{1/p} \\ 
&\notag \le  C  \l( 
\int_{\RN \setminus B(\dsl{y}{l}{N(n)}, R )} |\d^\alpha \bbl{w}{l}(\cdot - \dsl{y}{l}{N(n)}) |^p  \,\dd x  \r)^{1/p} \\ 
&\notag = C  \l( 
\int_{\RN \setminus B(0, R )} |\d^\alpha \bbl{w}{l}|^p  \,\dd x  \r)^{1/p}  
\to 0, \quad \mbox{as} \ R \to \infty. 
\end{align}

Combining~\eqref{202104300020}--\eqref{202105210020}, we get 
\begin{align}
\label{202105090020}
&\int_{\RN} |\d^\alpha u_{N(n)} |^p  \,\dd x  \\  
&\notag  \ge 
\sum_{l=0}^L 
\int_{B(0, R  )  } |\d^\alpha \bbl{w}{l} |^p  \,\dd x  
 +
\int_{\RN \setminus \Anrl}|\d^\alpha r^L_{N(n)}  |^p   \,\dd x  \\ 
&\notag  \qquad  -C \sum_{l=0}^L
\l( \int_{\RN \setminus B(0,R )  } |  \d^\alpha  \bbl{w}{l} |^p  \,\dd x  \r)^{1/p}
+ o(1)
\end{align}
as $n\toinfty$. 
Summing up~\eqref{202105090020} over all multi-indices $\alpha$ with $|\alpha|\le m$ 
and passing to the limits as $n \toinfty$, $R \toinfty$ and then  $L \to \Lambda$, 
one obtains 
\begin{align*}
\varlimsup_{n\toinfty} 
\| u_{N(n)} \|_{\Wmp}^p  
\ge 
\sum_{l=0}^\Lambda  \| \bbl{w}{l} \|_{\Wmp}^p 
+
 \varlimsup_{L\to \Lambda} \varlimsup_{R \toinfty} \varlimsup_{n\toinfty} 
 \| r^L_{N(n)} \|_{ W^{m,p}(\RN \setminus \Anrl) }^p,
\end{align*}
which completes the proof. 
\end{proof}

\begin{lemma}[Boundedness of the residual terms~\eqref{eq;residue bounded Wmp}]
\label{lemma;bdd residue Wmp}

It holds that 
\begin{equation*}
    \varlimsup_{L\to \Lambda}\varlimsup_{n\toinfty} \|r^L_{N(n)}\|_{\Wmp}
    \le 2 \varlimsup_{n\toinfty}\| u_{N(n)}  \|_{\Wmp} < +\infty.
\end{equation*}
\end{lemma}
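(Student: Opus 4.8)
The plan is to control the dislocated-profile sum by the triangle inequality, to show that the $\Wmp$-norm of that sum is asymptotically the $\ell^p$-sum of the individual profile norms, and finally to bound this $\ell^p$-sum by the energy estimate~\eqref{eq;energy estim Wmp} already proved in the preceding lemma.

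Write $M\coloneqq \varlimsup_{n\toinfty}\|u_{N(n)}\|_{\Wmp}$, which is finite since $(u_n)$ is bounded in $\Wmp$. First I would apply the triangle inequality to the residual term
\[
r^L_{N(n)}=u_{N(n)}-\sum_{l=0}^L \bbl{w}{l}(\cdot-\dsl{y}{l}{N(n)}),
\]
so that
\[
\|r^L_{N(n)}\|_{\Wmp}\le \|u_{N(n)}\|_{\Wmp}
+\Big\|\sum_{l=0}^L \bbl{w}{l}(\cdot-\dsl{y}{l}{N(n)})\Big\|_{\Wmp}.
\]

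The key step is the asymptotic orthogonality
\[
\lim_{n\toinfty}\Big\|\sum_{l=0}^L \bbl{w}{l}(\cdot-\dsl{y}{l}{N(n)})\Big\|_{\Wmp}^p
=\sum_{l=0}^L\|\bbl{w}{l}\|_{\Wmp}^p .
\]
To establish it, I would fix a multi-index $\alpha$ with $|\alpha|\le m$ and set $f_l\coloneqq\d^\alpha\bbl{w}{l}\in L^p(\RN)$, reducing the matter, for each $\alpha$, to the behaviour of $\int_{\RN}|\sum_{l=0}^L f_l(x-\dsl{y}{l}{N(n)})|^p\dx$. Given $\eps>0$, approximate each $f_l$ in $L^p(\RN)$ by a compactly supported $g_l$ with $\|f_l-g_l\|_{L^p(\RN)}<\eps$. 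The mutual orthogonality $|\dsl{y}{l}{N(n)}-\dsl{y}{k}{N(n)}|\toinfty$ from~\eqref{eq;100506} guarantees that for all sufficiently large $n$ the supports of the translates $g_l(\cdot-\dsl{y}{l}{N(n)})$ are pairwise disjoint, whence the $L^p$-norms add up exactly, $\|\sum_l g_l(\cdot-\dsl{y}{l}{N(n)})\|_{L^p(\RN)}^p=\sum_l\|g_l\|_{L^p(\RN)}^p$; replacing the $g_l$ by the $f_l$ costs at most $O(\eps)$ by a further triangle inequality, and letting $\eps\to0$ and summing over $|\alpha|\le m$ yields the claim. This density-plus-disjoint-support argument is the only genuinely technical point, and is where I expect the main work to lie; everything else is bookkeeping.

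Finally, the energy estimate~\eqref{eq;energy estim Wmp}, in which the far-field residual contribution is nonnegative and may be discarded, gives
\[
\sum_{l=0}^L\|\bbl{w}{l}\|_{\Wmp}^p\le \sum_{l=0}^\Lambda\|\bbl{w}{l}\|_{\Wmp}^p\le M^p,
\]
so that $\big(\sum_{l=0}^L\|\bbl{w}{l}\|_{\Wmp}^p\big)^{1/p}\le M$. Taking $\varlimsup_{n\toinfty}$ in the triangle inequality and inserting the asymptotic orthogonality then yields, for every admissible $L$,
\[
\varlimsup_{n\toinfty}\|r^L_{N(n)}\|_{\Wmp}
\le M+\Big(\sum_{l=0}^L\|\bbl{w}{l}\|_{\Wmp}^p\Big)^{1/p}\le 2M.
\]
Since this bound is uniform in $L$, passing to $\varlimsup_{L\to\Lambda}$ preserves it and gives the asserted inequality $\varlimsup_{L\to\Lambda}\varlimsup_{n\toinfty}\|r^L_{N(n)}\|_{\Wmp}\le 2\varlimsup_{n\toinfty}\|u_{N(n)}\|_{\Wmp}<+\infty$.
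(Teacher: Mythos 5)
Your proof is correct, and its skeleton coincides with the paper's: bound $\|r^L_{N(n)}\|_{\Wmp}$ by $\|u_{N(n)}\|_{\Wmp}+\|S^L_{N(n)}\|_{\Wmp}$ with $S^L_{N(n)}=\sum_{l=0}^L\bbl{w}{l}(\cdot-\dsl{y}{l}{N(n)})$, show that $\|S^L_{N(n)}\|_{\Wmp}^p$ converges to $\sum_{l=0}^L\|\bbl{w}{l}\|_{\Wmp}^p$, and control that sum by the energy estimate~\eqref{eq;energy estim Wmp}. The one place where you genuinely diverge is the proof of the asymptotic additivity. The paper applies the pointwise elementary inequality~\eqref{202108060010} (a by-product of \cref{lemma;power estimates of F} with $\eps=0$) to $\d^\alpha S^L_{N(n)}$, reducing the matter to the vanishing of the mixed integrals $\int_{\RN}|\d^\alpha\bbl{w}{l}(\cdot-\dsl{y}{l}{N(n)})|\,|\d^\alpha\bbl{w}{l'}(\cdot-\dsl{y}{l'}{N(n)})|^{p-1}\dx$, which is then deduced from the mutual orthogonality~\eqref{eq;100506} in~\eqref{202105020100}. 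You instead approximate each $\d^\alpha\bbl{w}{l}$ in $L^p(\RN)$ by a compactly supported function and exploit that the translates have pairwise disjoint supports for large $n$, so the $L^p$-norms add exactly up to an $O(\eps)$ error. Both devices rest on the same orthogonality condition; yours is more elementary and self-contained (it effectively inlines the proof of the cross-term vanishing that the paper leaves implicit), while the paper's cross-term formulation reuses the power inequality that it needs anyway for the decomposition of integral functionals in \cref{section;BLcubcrit} and generalizes directly to the Brezis--Lieb-type statements there. The remaining steps --- discarding the nonnegative far-field term in~\eqref{eq;energy estim Wmp} to get $\sum_{l=0}^L\|\bbl{w}{l}\|_{\Wmp}^p\le\varlimsup_{n\toinfty}\|u_{N(n)}\|_{\Wmp}^p$, and passing to the limits in $n$ and $L$ --- match the paper exactly.
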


\begin{proof}
Set $S^L_{N(n)} \coloneqq \sum_{l=0}^L \bbl{w}{l} (\cdot - \dsl{y}{l}{N(n)}).$
We shall employ the following elementary inequality for the Euclidean norm:
for all $\alpha_l\in\R^d$ $(l=1,\ldots,L, \ d,L\in\N)$, 
\begin{equation}\label{202108060010}
\l| \l| \sum_{l=1}^L \alpha_l \r|^p - \sum_{l=1}^L |\alpha_l|^p \r|\le C_L \sum_{l\neq m}|\alpha_l||\alpha_m|^{p-1}
\end{equation}
for some constant $C_L>0$. This inequality will be proved as in \cref{lemma;power estimates of F} with $\eps=0$. 
From this, we see that for any multi-index $\alpha$ with $|\alpha|\le m$, 
\begin{align*}
&\l|  \int_{\RN} |\d^\alpha S^L_{N(n)}|^p  \,\dd x  
- \sum_{l=0}^L \int_{\RN} |\d^\alpha \bbl{w}{l}|^p  \,\dd x  \r| \\ 
&\le  
\int_{\RN} \l| \l| \sum_{l=0}^L \d^\alpha \bbl{w}{l}(\cdot - \dsl{y}{l}{N(n)}) \r|^p - \sum_{l=0}^L |\d^\alpha \bbl{w}{l} (\cdot - \dsl{y}{l}{N(n)})|^p \r|  \,\dd x  \\
&\le C_L  \sum_{l \neq l'} 
\int_{\RN} |\d^\alpha \bbl{w}{l}(\cdot - \dsl{y}{l}{N(n)})| |\d^\alpha \bbl{w}{l'}(\cdot - \dsl{y}{l'}{N(n)})|^{p-1}  \,\dd x .
\end{align*}
The mutual orthogonality condition~\eqref{eq;100506} implies that for any $0 \le l \neq l' \le L$, 
\begin{align}\label{202105020100}
\int_{\RN} |\d^\alpha \bbl{w}{l}(\cdot - \dsl{y}{l}{N(n)})| |\d^\alpha \bbl{w}{l'}(\cdot - \dsl{y}{l'}{N(n)})|^{p-1}  \,\dd x 
\to 0
\end{align}
as $n\toinfty$. 
Hence we get 
\begin{align*}
\lim_{n\toinfty}
\int_{\RN} |\d^\alpha S^L_{N(n)}|^p  \,\dd x  
= \sum_{l=0}^L \int_{\RN} |\d^\alpha \bbl{w}{l}|^p  \,\dd x .
\end{align*}

Adding up the above over all multi-indices $\alpha$ with $|\alpha|\le m$, we obtain 
\begin{equation*}
\| S^L_{N(n)} \|_{\Wmp}^p
= \sum_{l=0}^L \| \bbl{w}{l} \|_{\Wmp}^p +o(1) \quad (n \toinfty),
\end{equation*}
which together with~\eqref{eq;200197} implies  
\begin{equation}\label{202105120020}
\varlimsup_{L \to \Lambda} \varlimsup_{n\toinfty} 
\| S^L_{N(n)} \|_{\Wmp}^p
= \sum_{l=0}^\Lambda \| \bbl{w}{l} \|_{\Wmp}^p
\le \varlimsup_{n \toinfty} \| u_{N(n)} \|_{\Wmp}^p. 
\end{equation}
So we observe from~\eqref{202105120020} that  
\begin{align*}
&\varlimsup_{L \to \Lambda} \varlimsup_{n\toinfty} 
\|r^L_{N(n)}\|_{\Wmp} \\
&\le 
\varlimsup_{n \toinfty} \| u_{N(n)} \|_{\Wmp}
+ 
\varlimsup_{L \to \Lambda} \varlimsup_{n\toinfty} 
\| S^L_{N(n)} \|_{\Wmp} \\ 
&\le 
2 \varlimsup_{n \toinfty} \| u_{N(n)} \|_{\Wmp}, 
\end{align*}
which implies the boundedness of the double sequence 
$(r^L_{N(n)})$ in $\Wmp$.
This completes the proof. 
\end{proof}
\end{step}

\begin{step}[Vanishing of the residual term]
\begin{lemma}[Vanishing of the residual term~\eqref{eq;residue vanish Wmp} and~\eqref{eq;residue vanish Wmp2}] \label{lemma;proof residue vanish Wmp}
For any multi-index $\alpha\in(\NN)^N$ with $|\alpha|<m$ and any $q\in]p,p^*_{m-|\alpha|}[$,  
\begin{equation}
\label{10170010}
\lim_{L\to \Lambda}\varlimsup_{n\toinfty}\int_{\RN}|\d^\alpha r^L_{N(n)}|^q  \,\dd x  =0.
\end{equation}
In fact, for any $k\in\NN$ with $k<m$ and any $q\in ]p,p^*_{m-k}[$, 
\begin{equation}\label{10170011}
\lim_{L\to \Lambda}\varlimsup_{n\toinfty}
\|r^L_{N(n)}\|_{W^{k,q}(\RN)}=0.
\end{equation} 
\end{lemma}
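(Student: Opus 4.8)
The plan is to read both claims off \cref{theorem;weak G-comp conti}, applied to the double-suffixed sequence $u^L_n \coloneqq r^L_{N(n)}$ in $X=\Wmp$ with $G=G[\ZN]$. The two hypotheses of that theorem are already in hand: the required boundedness is \cref{lemma;bdd residue Wmp} (in the form $\varlimsup_{L\to\Lambda}\varlimsup_{n\toinfty}\|r^L_{N(n)}\|_{\Wmp}<\infty$, together with the finiteness of $\varlimsup_{n\toinfty}\|r^L_{N(n)}\|_{\Wmp}$ for each fixed $L$), and the $G[\ZN]$-weak vanishing
\[
\lim_{L\to\Lambda}\sup_{\phi\in U}\varlimsup_{n\toinfty}\sup_{y\in\ZN}|\la\phi,r^L_{N(n)}(\cdot+y)\ra|=0
\]
is precisely the exactness condition~\eqref{eq;Ishiwata condition Wmp}, once one notes $r^L_{N(n)}(\cdot+y)=g[y]^{-1}r^L_{N(n)}$ and $\sup_{y\in\ZN}=\sup_{g\in G[\ZN]}$. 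What remains is to supply the correct $G[\ZN]$-completely continuous targets and to separate the cases $\Lambda=\infty$ and $\Lambda<\infty$, the latter being where the outer limit $L\to\Lambda$ degenerates to evaluation at $L=\Lambda$.

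When $\Lambda=\infty$, I would first prove~\eqref{10170011} by invoking \cref{theorem;weak G-comp conti} with $Y=W^{k,q}(\RN)$: for $k<m$ and $q\in]p,p^*_{m-k}[$ the embedding $\Wmp\hookrightarrow W^{k,q}(\RN)$ is $G[\ZN]$-completely continuous by \cref{lemma;G-complconti-lowerorderderiv2}, so the theorem yields $\lim_{L\to\infty}\varlimsup_{n\toinfty}\|r^L_{N(n)}\|_{W^{k,q}(\RN)}=0$, which is exactly~\eqref{10170011}. The estimate~\eqref{10170010} then follows immediately by taking $k=|\alpha|$: since $\|\d^\alpha r^L_{N(n)}\|_{L^q(\RN)}\le\|r^L_{N(n)}\|_{W^{|\alpha|,q}(\RN)}$ and $p^*_{m-|\alpha|}=p^*_{m-k}$, the two stated ranges of $q$ coincide and the vanishing of the $W^{|\alpha|,q}$-norm controls $\d^\alpha$ in $L^q$. (Equivalently, one could run the contradiction argument of \cref{theorem;weak G-comp conti} verbatim with the embedding replaced by the $G[\ZN]$-completely continuous operator $\d^\alpha:\Wmp\to L^q(\RN)$ furnished by \cref{lemma;G-complconti-lowerorderderiv}.)

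When $\Lambda<\infty$, both claims concern only the final residual $r^\Lambda_{N(n)}$, and there is no double limit to manage. Here~\eqref{eq;Ishiwata condition Wmp} reads $\lim_{n\toinfty}\sup_{y\in\ZN}|\la\phi,r^\Lambda_{N(n)}(\cdot+y)\ra|=0$ for every $\phi\in U$, which is exactly the assertion that $r^\Lambda_{N(n)}\to0$ $G[\ZN]$-weakly in $\Wmp$ (cf. \cref{definition;G-weak convergence}). Applying $G[\ZN]$-complete continuity directly — \cref{lemma;G-complconti-lowerorderderiv} for $\d^\alpha:\Wmp\to L^q(\RN)$ and \cref{lemma;G-complconti-lowerorderderiv2} for $\Wmp\hookrightarrow W^{k,q}(\RN)$ — gives $\|\d^\alpha r^\Lambda_{N(n)}\|_{L^q(\RN)}\to0$ and $\|r^\Lambda_{N(n)}\|_{W^{k,q}(\RN)}\to0$ as $n\toinfty$, which are~\eqref{10170010} and~\eqref{10170011} in this case.

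I do not expect a genuine obstacle in this lemma: the analytic content has been front-loaded into the exactness condition~\eqref{eq;Ishiwata condition Wmp} and the boundedness \cref{lemma;bdd residue Wmp}, so that the residual Lebesgue and Sobolev vanishing becomes a formal consequence of $G[\ZN]$-complete continuity through \cref{theorem;weak G-comp conti}. The only points demanding care are bookkeeping ones: identifying $\sup_{y\in\ZN}$ with $\sup_{g\in G[\ZN]}$, reconciling the $\lim_{L\to\Lambda}$ in the statement with the $\lim_{L\to\infty}$ hypothesis of \cref{theorem;weak G-comp conti} (which is automatic once $\Lambda=\infty$), and isolating the degenerate case $\Lambda<\infty$, where one argues straight from $G[\ZN]$-weak convergence rather than through the double limit.
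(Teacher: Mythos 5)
Your proposal is correct and follows essentially the same route as the paper: the paper's own proof is a one-line invocation of \cref{theorem;weak G-comp conti} together with \cref{lemma;G-complconti-lowerorderderiv,lemma;G-complconti-lowerorderderiv2} and the exactness condition~\eqref{eq;Ishiwata condition Wmp}, exactly the ingredients you assemble. Your extra care with the degenerate case $\Lambda<\infty$ and the identification $\sup_{y\in\ZN}=\sup_{g\in G[\ZN]}$ is just a more explicit writing-out of the same argument.
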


\begin{proof}
This lemma is readily checked by \cref{theorem;weak G-comp conti} and \cref{lemma;G-complconti-lowerorderderiv,lemma;G-complconti-lowerorderderiv2} together with~\eqref{202104300010}.
\end{proof}
\end{step}

One can show a simple version of the Brezis-Lieb lemma. 
Although the following lemma will be shown by using the classical Brezis-Lieb lemma iteratively, we will give another proof.

\begin{lemma}[Simple version of the Brezis-Lieb lemma~\eqref{eq;Brezis Lieb theorem3}]
For any $q\in[p,\ppm]$,  
\begin{equation}\notag 
\varlimsup_{n\toinfty}\|u_{N(n)}\|_{L^q(\RN)}^q 
=  
\sum_{l=0}^\Lambda \|\bbl{w}{l}\|_{L^q(\RN)}^q
+\lim_{L\to \Lambda} \varlimsup_{n\toinfty}\|r^L_{N(n)}\|_{L^q(\RN)}^q.
\end{equation}
\end{lemma}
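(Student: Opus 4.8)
The plan is to prove, for each fixed $L\in\NN^{<\Lambda+1}$, the sharp fixed-$L$ identity
\eqn{
\varlimsup_{n\toinfty}\|u_{N(n)}\|_{L^q(\RN)}^q
=\sum_{l=0}^L\|\bbl{w}{l}\|_{L^q(\RN)}^q
+\varlimsup_{n\toinfty}\|r^L_{N(n)}\|_{L^q(\RN)}^q,
}
and then to let $L\to\Lambda$. Since the left-hand side does not depend on $L$ and the partial sums $\sum_{l=0}^L\|\bbl{w}{l}\|_{L^q(\RN)}^q$ are nondecreasing in $L$ and bounded above by it, these sums converge to $\sum_{l=0}^\Lambda\|\bbl{w}{l}\|_{L^q(\RN)}^q$; hence $\varlimsup_{n}\|r^L_{N(n)}\|_{L^q(\RN)}^q$ converges as $L\to\Lambda$, and subtracting gives exactly the asserted formula. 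So everything reduces to the fixed-$L$ identity, which I treat for finite $q$ with $1<p\le q<\infty$ (the case $q=\ppm=\infty$ being excluded, as $\|\cdot\|_{L^\infty}^\infty$ is undefined).

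In keeping with the remark that this lemma is to be proved without iterating the classical Brezis-Lieb lemma, I would argue directly from $u_{N(n)}=\sum_{l=0}^L\bbl{w}{l}(\cdot-\dsl{y}{l}{N(n)})+r^L_{N(n)}$, treating the right-hand side as a pointwise sum of $L+2$ scalar functions $\alpha_0,\dots,\alpha_{L+1}$, where $\alpha_l=\bbl{w}{l}(\cdot-\dsl{y}{l}{N(n)})$ for $0\le l\le L$ and $\alpha_{L+1}=r^L_{N(n)}$. Applying the elementary inequality~\eqref{202108060010} pointwise with the exponent $q$ in place of $p$, integrating over $\RN$, and using that translations are isometries on $L^q(\RN)$, I obtain
\eqn{
\l|\,\|u_{N(n)}\|_{L^q(\RN)}^q-\sum_{l=0}^L\|\bbl{w}{l}\|_{L^q(\RN)}^q-\|r^L_{N(n)}\|_{L^q(\RN)}^q\,\r|
\le C\sum_{a\ne b}\int_{\RN}|\alpha_a|\,|\alpha_b|^{q-1}\dx.
}
It then remains to show that every cross term on the right tends to $0$ as $n\toinfty$; taking $\varlimsup_{n\toinfty}$ afterwards yields the fixed-$L$ identity.

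The cross terms fall into two families. The profile-profile terms $\int_{\RN}|\bbl{w}{l}(\cdot-\dsl{y}{l}{N(n)})|\,|\bbl{w}{k}(\cdot-\dsl{y}{k}{N(n)})|^{q-1}\dx$ ($0\le l\neq k\le L$) vanish just as the corresponding terms in \cref{lemma;bdd residue Wmp}, because the mutual orthogonality~\eqref{eq;100506} forces $|\dsl{y}{l}{N(n)}-\dsl{y}{k}{N(n)}|\toinfty$, so that a translation to infinity of a fixed $L^q$-function is tested against a fixed $L^{q'}$-function. The profile-residual terms are the heart of the matter. Changing variables $x\mapsto x+\dsl{y}{l}{N(n)}$, they become $\int_{\RN}|\bbl{w}{l}|\,|r^L_{N(n)}(\cdot+\dsl{y}{l}{N(n)})|^{q-1}\dx$ and $\int_{\RN}|r^L_{N(n)}(\cdot+\dsl{y}{l}{N(n)})|\,|\bbl{w}{l}|^{q-1}\dx$. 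By \cref{lemma;bdd residue Wmp} and the Sobolev embedding $\Wmp\hookrightarrow L^q(\RN)$, the sequence $r^L_{N(n)}(\cdot+\dsl{y}{l}{N(n)})$ is bounded in $L^q(\RN)$, and by assertion~(iv) of \cref{theorem;profile-decomp. in Wmp} it converges to $0$ almost everywhere. Since $1<q<\infty$, a bounded sequence in $L^q(\RN)$ converging a.e.\ to $0$ converges weakly to $0$ in $L^q(\RN)$, and its $(q-1)$-st power (bounded in $L^{q'}(\RN)$ and a.e.\ null) converges weakly to $0$ in $L^{q'}(\RN)$. Pairing through the duality $L^{q'}(\RN)=(L^q(\RN))^*$ against $|\bbl{w}{l}|\in L^q(\RN)$, respectively $|\bbl{w}{l}|^{q-1}\in L^{q'}(\RN)$, forces both integrals to $0$.

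The step I expect to be the main obstacle is precisely this profile-residual estimate: one must push the a.e.\ convergence of the re-centered residual through the nonlinearity $t\mapsto|t|^{q-1}$ and reinterpret it as weak convergence in the correct dual space, which is where boundedness in $L^q(\RN)$ — and hence the Sobolev embedding even at the critical exponent $q=\ppm$, where $G[\ZN]$-complete continuity fails — is genuinely used. I would emphasize that no compactness is invoked anywhere, only boundedness in $L^q(\RN)$ together with a.e.\ convergence, which is exactly why the endpoints $q=p$ and $q=\ppm$ are admissible in the statement.
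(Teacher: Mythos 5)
Your proposal is correct and follows essentially the same route as the paper: apply the elementary inequality~\eqref{202108060010} with exponent $q$ to the decomposition $u_{N(n)}=\sum_{l=0}^L\bbl{w}{l}(\cdot-\dsl{y}{l}{N(n)})+r^L_{N(n)}$, kill the profile--profile cross terms by the mutual orthogonality~\eqref{eq;100506}, and kill the profile--residual cross terms by upgrading a.e.\ convergence plus $L^q$-boundedness of $r^L_{N(n)}(\cdot+\dsl{y}{l}{N(n)})$ to weak convergence of $|r^L_{N(n)}|$ in $L^q$ and of $|r^L_{N(n)}|^{q-1}$ in $L^{q'}$. Your additional remarks (the monotone-convergence argument for the existence of the $L\to\Lambda$ limit, and the exclusion of $q=\ppm=\infty$) are sensible refinements of points the paper leaves implicit.
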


\begin{proof}
From~\eqref{202108060010}, one sees that 
\begin{align*}
&
\int_{\RN}\l| |u_{N(n)}|^q -\sum_{l=0}^L|\bbl{w}{l}(\cdot-\dsl{y}{l}{N(n)})|^q -|r^L_{N(n)}|^q  \r| \,\dd x  \\
&\le 
C_L\sum_{0\le l\neq k\le L}\int_{\RN} |\bbl{w}{l}(\cdot-\dsl{y}{l}{N(n)})||\bbl{w}{k}(\cdot-\dsl{y}{k}{N(n)})|^{q-1} \,\dd x \\
&\qquad 
+C_L\sum_{l=0}^L\int_{\RN} |\bbl{w}{l}(\cdot-\dsl{y}{l}{N(n)})||r^L_{N(n)}|^{q-1} \,\dd x 
\\
&\qquad 
+C_L\sum_{l=0}^L\int_{\RN} |\bbl{w}{l}(\cdot-\dsl{y}{l}{N(n)})|^{q-1}|r^L_{N(n)}| \,\dd x .
\end{align*}
By the mutual orthogonality condition~\eqref{eq;100506}, 
we have 
\begin{equation*}
    \int_{\RN} |\bbl{w}{l}(\cdot-\dsl{y}{l}{N(n)})||\bbl{w}{k}(\cdot-\dsl{y}{k}{N(n)})|^{q-1} \,\dd x  \to 0 \quad (n\toinfty, \ l\neq k).
\end{equation*}

Since the sequence $(r^L_{N(n)}(\cdot+\dsl{y}{l}{N(n)}))$ is convergent to zero a.e.\  in $\RN$ and is bounded in $L^q(\RN)$, one gets 
\begin{alignat*}{2}
    &|r^L_{N(n)}|\to 0 &\quad &\mbox{weakly in}\ L^q(\RN), \\ 
    &|r^L_{N(n)}|^{q-1}\to 0 &\quad &\mbox{weakly in}\ L^{q'}(\RN),
\end{alignat*}
(see, e.g.,~\cite[Lemma~1.3.1]{T-01})
and thus we get  
\begin{align*}
    &\int_{\RN} |\bbl{w}{l}(\cdot-\dsl{y}{l}{N(n)})|^{q-1}|r^L_{N(n)}| \,\dd x 
    =\int_{\RN} |\bbl{w}{l}|^{q-1}|r^L_{N(n)}(\cdot+\dsl{y}{l}{N(n)})| \,\dd x 
    \to 0, \\ 
    &\int_{\RN} |\bbl{w}{l}(\cdot-\dsl{y}{l}{N(n)})||r^L_{N(n)}|^{q-1} \,\dd x 
    =\int_{\RN} |\bbl{w}{l}||r^L_{N(n)}(\cdot+\dsl{y}{l}{N(n)})|^{q-1} \,\dd x 
    \to 0. \\ 
\end{align*}
Hence we have: 
\begin{equation*}
    \int_{\RN} |u_{N(n)}|^q \,\dd x 
    =\sum_{l=0}^L\int_{\RN}|\bbl{w}{l}|^q \,\dd x 
    +\int_{\RN}|r^L_{N(n)}|^q  \,\dd x  +o(1) \quad (n\toinfty).
\end{equation*}
Passing to the limits as $n\toinfty$ and then $L\to\Lambda$, one obtains
\begin{equation*}
    \varlimsup_{n\toinfty}\int_{\RN}|u_{N(n)}|^q  \,\dd x  
    =\sum_{l=0}^\Lambda \int_{\RN}|\bbl{w}{l}|^q  \,\dd x  
    +\lim_{L\to\Lambda}\varlimsup_{n\toinfty}\int_{\RN}|r^L_{N(n)}|^q \,\dd x .
\end{equation*}
\end{proof}

Eventually, the proof of \cref{theorem;profile-decomp. in Wmp} has been  complete.
\qed 

\section[Decomposition of integral functionals]{Decomposition of integral functionals in subcritical cases}\label{section;BLcubcrit}
Now we move on to the results of \emph{decomposition of integral functionals} subordinated to profile decomposition (also known as the iterative Brezis-Lieb lemmas) which originally state that along the profile decomposition, $\|u_n\|_{L^q(\RN)}^q$ will be decomposed into the (possibly infinite) sum of those of profiles for suitable $q$, while the classical Brezis-Lieb lemma implies that $\|u_n\|_{L^q(\RN)}^q-  \|\bbl{w}{0}\|_{L^q(\RN)}^q -\|r^0_{n}\|_{L^q(\RN)}^q = o(1)$ as $n \toinfty$.
It is noteworthy that such a decomposition of integral functionals is realized as the strict equality (cf.~\eqref{eq;energy estim Wmp}).
This result has been obtained by some precursors on profile decomposition, e.g.,~\cite{T-01,T-K}. 
Furthermore, we shall extend this result to lower order derivatives of $u_n$ and profiles. This extension will be shown with the help of \cref{lemma;G-complconti-lowerorderderiv}.

\subsection{Brief Summary}
Before investigating the decomposition of integral functionals below, we shall briefly review our results, picking up much typical and simple examples in order to describe the essence of the decomposition of integral functionals. 

Assume that $(u_n)$ is a bounded sequence in $\Wmp$ and take profile elements $(\bbl{w}{l}, \dsl{y}{l}{n}, \Lambda)$ (taking a subsequence still denoted by $n$) given by \cref{theorem;profile-decomp. in Wmp}.
According to \cref{lemma;G-complete continuity of Wmp,lemma;G-complconti-lowerorderderiv,lemma;G-complconti-lowerorderderiv2} and the exactness condition~\eqref{eq;Ishiwata condition Wmp}, the residual term $r^L_n$ and its derivatives are vanishing (as $n\toinfty$ and then $L\to\Lambda$) in suitable $L^q(\RN)$ or $W^{k,p}(\RN)$. 

Typically, our main results read: 
\begin{alignat}{2}
&\lim_{n\to\infty}\int_{\RN} |u_n|^q \,\dd x
=\sum_{l=0}^\Lambda \int_{\RN} |\bbl{w}{l}|^q \,\dd x, &\quad 
&q\in ]p,\ppm[, \label{massdeco1}\\ 
&\lim_{n\toinfty}\int_{\RN}|\d^\alpha u_n|^q \,\dd x 
=\sum_{l=0}^\Lambda \int_{\RN} |\d^\alpha \bbl{w}{l}|^q \,\dd x, 
&\quad &q\in]p,p^*_{m-|\alpha|}[, \ |\alpha|<m, \label{massdeco2}\\ 
&\lim_{n\toinfty}\|u_n\|_{W^{k,q}(\RN)}^q
=\sum_{l=0}^\Lambda \|\bbl{w}{l}\|_{W^{k,q}(\RN)}^q, &\quad 
&q\in]p,p^*_{m-k}[, \ 0\le k<m. \label{massdeco3}
\end{alignat}

These formulas are shown as follows.  
As for Euclidean norms, one can show that  
\begin{alignat}{2}
&\l| \l|\sum_{l=1}^L s_l\r|^q-\sum_{l=1}^L|s_l|^q\r|
\le C_L \sum_{l\neq k}|s_l||s_k|^{q-1}, &\quad &s_l\in\R, \label{massdecoineq1}\\ 
&| |a+b|^q-|a|^q|\le \eps |a|^q+C_\eps|b|^q, &\quad &\eps>0, \ a,b\in\R. \label{massdecoineq2}
\end{alignat}
Combining the above, we have: 
\begin{align*}
&\l|\int_{\RN}|u_n|^q\,\dd x 
-\sum_{l=0}^L\int_{\RN}|\bbl{w}{l}|^q\,\dd x \r| \\
&=\l|\int_{\RN}|u_n|^q\,\dd x 
-\sum_{l=0}^L\int_{\RN}|\bbl{w}{l}(\cdot-\dsl{y}{l}{n})|^q\,\dd x \r| \\
&\le \eps\int_{\RN}\l|\sum_{l=0}^L\bbl{w}{l}(\cdot-\dsl{y}{l}{n})\r|^q\,\dd x  
+C_\eps\int_{\RN}|r^L_n|^q\,\dd x \\
&\qquad \qquad \qquad \qquad +C_L\sum_{l\neq k}\int_{\RN}|\bbl{w}{l}(\cdot-\dsl{y}{l}{n})||\bbl{w}{k}(\cdot-\dsl{y}{k}{n})|^{q-1}\,\dd x.
\end{align*}
Since $(u_n)$ and $(r^L_n)$ are bounded in $\Wmp$, 
$\int_{\RN}|\sum_{l=0}^L\bbl{w}{l}(\cdot-\dsl{y}{l}{n})|^q\,\dd x$ is bounded. 
By the mutual orthogonality condition, $\int_{\RN}|\bbl{w}{l}(\cdot-\dsl{y}{l}{n})||\bbl{w}{k}(\cdot-\dsl{y}{k}{n})|^{q-1}\,\dd x$ is converging to zero as $n\toinfty$. 
Also, the residual term $\int_{\RN}|r^L_n|^q\,\dd x$ is vanishing as $n\toinfty$ and then $L\to\Lambda$. 
Therefore, passing to the limits as $n\toinfty$, $L\to\Lambda$ and then, $\eps\to 0$, one can conclude~\eqref{massdeco1}.
Similarly, one can show~\eqref{massdeco2} and~\eqref{massdeco3}.

In the above observation, the inequality~\eqref{massdecoineq1} is employed together with the mutual orthogonality condition, yielding the degeneracy of cross terms. The inequality~\eqref{massdecoineq2} is used with $b=r^L_n$ which is vanishing in suitable spaces. 
Hence, one can obtain other variants of the above iterated Brezis-Lieb lemma, by considering the Lebesgue or Sobolev spaces into which $\Wmp$ is embedded $G[\ZN]$-completely continuously. 
Along the above strategy, in what follows, we shall investigate more general results of decomposition of integral functionals.

\subsection{Main theorems}
Now we shall discuss general results of decomposition of integral functionals of $(\intrn F(\d^\alpha u_n) \,\dd x )$ with continuous functions $F$ of $p$-superlinear and subcritical growth (see also~\eqref{growth condition of F} below).  
We refer the reader to~\cite[Section~3.6]{T-K} as a reference of conditions of continuous functions.

\begin{theorem}[Decomposition of integral functionals~1]
\label{theorem;Lp estim.}
Let $(u_n)$ be a bounded sequence in $\Wmp$ 
and suppose, on a subsequence (still denoted by $n$), that $(u_n)$ has a profile decomposition with profile elements 
$ (\bbl{w}{l}, \dsl{y}{l}{n}, \Lambda ) \in \Wmp \times \ZN\times (\N\cup\{0,\infty\})$ 
$(l \in \NN^{<\Lambda +1}$, $n \in  \Z_{ \ge l})$ 
as in \cref{theorem;profile-decomp. in Wmp}.
Let $\alpha\in(\NN)^N$ be a multi-index satisfying $|\alpha|<m$. 
Assume that $f_\alpha \in C(\R; \R)$ satisfies that 
for any $\eps >0$,  there exist $q = q_{\alpha,\eps} \in ]p,p^*_{m-|\alpha|}[$ and $C = C_{\alpha,\eps} >0$ such that 
\begin{equation}\tag{f}\label{growth condition of f}
|f_\alpha(s)| \le 
\begin{cases}
\eps (|s|^{p-1}+|s|^{(p^*_{m-|\alpha|})-1}) + C|s|^{q-1},  &s \in \R, \quad \mbox{if}\ p^*_{m-|\alpha|} <\infty, \\
\eps |s|^{p-1} + C|s|^{q-1}, &s \in \R,  \quad \mbox{if}\ p^*_{m-|\alpha|} = \infty,   
\end{cases}
\end{equation}
and define a primitive function $F_\alpha$ of $f_\alpha$ by
$F_\alpha(s) = \int_0^s f_\alpha(t)\,\dd t,$ $s \in \R.$
Then the following relation holds true: 
\begin{equation}\label{eq;Lp estim.}
\lim_{n \toinfty} \int_{\RN} F_\alpha(\d^\alpha u_n)  \,\dd x  
= \sum_{l=0}^\Lambda \int_{\RN} F_\alpha(\d^\alpha w^{(l)})  \,\dd x . 
\end{equation}
\end{theorem}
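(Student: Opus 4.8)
The plan is to mimic the computation sketched in the Brief Summary, with the crucial difference that, since $F_\alpha$ is a general nonlinearity and not a pure power, the degeneracy of the cross terms can no longer be read off from the algebraic inequality~\eqref{massdecoineq1} and must instead be produced by an \emph{iterated Brezis--Lieb argument}. First I would extract from~\eqref{growth condition of f} the two analytic ingredients used throughout. Integrating $f_\alpha$ gives, for every $\varepsilon>0$, an exponent $q=q_{\alpha,\varepsilon}\in\,]p,p^*_{m-|\alpha|}[$ and a constant $C_\varepsilon$ with $|F_\alpha(s)|\le \varepsilon(|s|^p+|s|^{p^*_{m-|\alpha|}})+C_\varepsilon|s|^{q}$. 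Then, starting from $F_\alpha(a+b)-F_\alpha(a)=\int_0^1 f_\alpha(a+tb)\,b\,dt$ and using $|a+tb|\le|a|+|b|$ together with Young's inequality, I would derive a \emph{splitting inequality}: for every $\varepsilon>0$ (with the associated $q_\varepsilon,C_\varepsilon$),
\[
|F_\alpha(a+b)-F_\alpha(a)|\le \varepsilon\,\Phi_\varepsilon(a)+\varepsilon\bigl(|b|^p+|b|^{p^*_{m-|\alpha|}}\bigr)+C_\varepsilon|b|^{q_\varepsilon},
\]
where $\Phi_\varepsilon(s)=|s|^p+|s|^{p^*_{m-|\alpha|}}+|s|^{q_\varepsilon}$; the point of the bookkeeping is that every power of $b$ that does \emph{not} vanish in the residual ($L^p$ and $L^{p^*_{m-|\alpha|}}$) carries an arbitrarily small factor, whereas the only large factor $C_\varepsilon$ sits on the subcritical power $|b|^{q_\varepsilon}$. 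By Young's inequality the same argument also yields the symmetric three-term form $|F_\alpha(a+b)-F_\alpha(a)-F_\alpha(b)|\le \eta\,\Phi_\eta(a)+C_\eta\Psi_\eta(b)$ with $\Psi_\eta(s)=|s|^p+|s|^{p^*_{m-|\alpha|}}+|s|^{q_\eta}$, which is the form needed for the Brezis--Lieb step.

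With $S^L_n:=\sum_{l=0}^L w^{(l)}(\cdot-y_{l,n})$ and the residual $r^L_n:=u_n-S^L_n$ from \cref{theorem;profile-decomp. in Wmp}, I would write $\d^\alpha u_n=\d^\alpha S^L_n+\d^\alpha r^L_n$ and split
\[
\int_{\RN}F_\alpha(\d^\alpha u_n)\dx-\sum_{l=0}^L\int_{\RN}F_\alpha(\d^\alpha w^{(l)})\dx=\mathrm{(I)}+\mathrm{(II)},
\]
where $\mathrm{(I)}=\int_{\RN}[F_\alpha(\d^\alpha S^L_n+\d^\alpha r^L_n)-F_\alpha(\d^\alpha S^L_n)]\dx$ and $\mathrm{(II)}=\int_{\RN}[F_\alpha(\d^\alpha S^L_n)-\sum_{l=0}^L F_\alpha(\d^\alpha w^{(l)}(\cdot-y_{l,n}))]\dx$ (using translation invariance of the integral). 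For $\mathrm{(I)}$ I would apply the splitting inequality with $a=\d^\alpha S^L_n$, $b=\d^\alpha r^L_n$: since $S^L_n=u_n-r^L_n$ is bounded in $\Wmp$ uniformly in $n,L$ by~\eqref{eq;residue bounded Wmp}, the critical Sobolev embedding $\d^\alpha:\Wmp\to L^{p^*_{m-|\alpha|}}$ makes $\int\Phi_\varepsilon(\d^\alpha S^L_n)$ and $\int(|\d^\alpha r^L_n|^p+|\d^\alpha r^L_n|^{p^*_{m-|\alpha|}})$ uniformly bounded, while $\|\d^\alpha r^L_n\|_{L^{q_\varepsilon}}^{q_\varepsilon}\to0$ under $\varlimsup_{L\to\Lambda}\varlimsup_n$ by~\eqref{eq;residue vanish Wmp}. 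Hence $\varlimsup_{L\to\Lambda}\varlimsup_n|\mathrm{(I)}|\le C\varepsilon$, and letting $\varepsilon\to0$ gives $\varlimsup_{L\to\Lambda}\varlimsup_n|\mathrm{(I)}|=0$.

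The term $\mathrm{(II)}$ is the main obstacle, because for a genuine nonlinearity the vanishing of the cross terms is not algebraic. For each fixed $L$ I would telescope $\mathrm{(II)}=\sum_{j=1}^L\int_{\RN}[F_\alpha(\d^\alpha\sigma_j)-F_\alpha(\d^\alpha\sigma_{j-1})-F_\alpha(\d^\alpha w^{(j)}(\cdot-y_{j,n}))]\dx$, where $\sigma_j=\sum_{l=0}^j w^{(l)}(\cdot-y_{l,n})$, and treat each summand by a Brezis--Lieb argument. After the change of variables $x\mapsto x+y_{j,n}$ the $j$-th summand becomes $\int[F_\alpha(\d^\alpha\tilde\sigma_{j,n}+\d^\alpha w^{(j)})-F_\alpha(\d^\alpha\tilde\sigma_{j,n})-F_\alpha(\d^\alpha w^{(j)})]\dx$ with $\tilde\sigma_{j,n}=\sum_{l<j}w^{(l)}(\cdot+y_{j,n}-y_{l,n})$; by the mutual orthogonality~\eqref{eq;100506} one has $|y_{j,n}-y_{l,n}|\to\infty$ for $l<j$, so $\d^\alpha\tilde\sigma_{j,n}\to0$ a.e.\ while staying bounded in $L^p,L^{q},L^{p^*_{m-|\alpha|}}$. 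Now $b=\d^\alpha w^{(j)}$ is \emph{fixed}, so the three-term splitting inequality gives the pointwise bound $W_n:=|F_\alpha(\d^\alpha\tilde\sigma_{j,n}+b)-F_\alpha(\d^\alpha\tilde\sigma_{j,n})-F_\alpha(b)|\le\eta\,\Phi_\eta(\d^\alpha\tilde\sigma_{j,n})+C_\eta\Psi_\eta(b)$ with $\Psi_\eta(b)\in L^1(\RN)$ independent of $n$; since moreover $W_n\to0$ a.e.\ (as $\d^\alpha\tilde\sigma_{j,n}\to0$ a.e.\ and $F_\alpha(0)=0$), dominated convergence applied to $(W_n-\eta\,\Phi_\eta(\d^\alpha\tilde\sigma_{j,n}))_+$ yields $\varlimsup_n\int W_n\le\eta\sup_n\int\Phi_\eta(\d^\alpha\tilde\sigma_{j,n})\le C\eta$, whence $\int W_n\to0$ on letting $\eta\to0$. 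Summing the finitely many $j$ gives $\lim_n\mathrm{(II)}=0$ for each fixed $L$. Finally I would assemble the double limit: the growth bound on $F_\alpha$ together with the critical embedding and the summability $\sum_l\|w^{(l)}\|_{\Wmp}^p<\infty$ coming from~\eqref{eq;energy estim Wmp} show that $\sum_{l=0}^\Lambda\int F_\alpha(\d^\alpha w^{(l)})\dx$ converges absolutely, say to $b_\Lambda$, and that the partial sums $b_L\to b_\Lambda$ as $L\to\Lambda$. Writing $a_n=\int F_\alpha(\d^\alpha u_n)\dx$, the two steps above give $\varlimsup_n|a_n-b_L|=\varlimsup_n|\mathrm{(I)}|$ for each fixed $L$ (since $\mathrm{(II)}\to0$), hence $\varlimsup_{L\to\Lambda}\varlimsup_n|a_n-b_L|=0$; combining with $b_L\to b_\Lambda$ gives $\varlimsup_n|a_n-b_\Lambda|=0$, i.e.\ $\lim_n a_n=b_\Lambda$, which is exactly~\eqref{eq;Lp estim.}. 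I expect the genuine technical work to be concentrated in the Brezis--Lieb step for $\mathrm{(II)}$ — in particular in arranging the splitting inequality so that the fixed profile $\d^\alpha w^{(j)}$ supplies an $n$-independent integrable majorant, which is what makes dominated convergence applicable despite the profiles having non-compact support.
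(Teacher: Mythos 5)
Your argument is correct, and it reaches \eqref{eq;Lp estim.} by a genuinely different route in the one place where the real work lies. The residual part (your term $\mathrm{(I)}$) is handled exactly as in the paper: the $\eps$-bookkeeping extracted from \eqref{growth condition of f} is arranged so that the $L^p$ and critical powers of $\d^\alpha r^L_n$ carry an arbitrarily small factor while only the subcritical power $|\d^\alpha r^L_n|^{q_\eps}$ carries the large constant, and that term dies under $\varlimsup_{L\to\Lambda}\varlimsup_{n\toinfty}$ by \eqref{eq;residue vanish Wmp}. For the cross part (your term $\mathrm{(II)}$), however, the paper never leaves the level of integral norms: it proves by induction on $L$ the multi-term inequality
$\l|F_\alpha(\sum_{l}s_l)-\sum_{l}F_\alpha(s_l)\r|\le\eps\sum_{l}(|s_l|^p+|s_l|^{p^*_{m-|\alpha|}})+C_{\eps,L}\sum_{l\neq k}|s_l|^{q_L-1}|s_k|$
(\cref{lemma;power estimates of F,lemma;lemma;power estimates of F modified}) and then kills each cross integral $\int_{\RN}|\d^\alpha\bbl{w}{l}(x-\dsl{y}{l}{n})|\,|\d^\alpha\bbl{w}{k}(x-\dsl{y}{k}{n})|^{q_L-1}\dx$ using the mutual orthogonality of the dislocations; the paper explicitly announces that \cref{theorem;Lp estim.,theorem;Lp estim.2} are proved ``without relying on pointwise convergence.'' You instead telescope over the profiles and run a classical Brezis--Lieb argument, applying dominated convergence to $(W_n-\eta\,\Phi_\eta(\d^\alpha\tilde\sigma_{j,n}))_+$ with the fixed profile supplying the $n$-independent majorant. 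Both routes are sound: yours avoids the induction and the combinatorial inequality and is closer in spirit to the original Brezis--Lieb lemma, while the paper's is purely norm-based and transfers verbatim to the $x$-dependent integrands of \cref{theorem;Lp estim.2}.

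One step of yours needs a small repair. For a fixed $w\in\Wmp$ and $|z_n|\toinfty$ it is \emph{not} automatic that $w(\cdot+z_n)\to0$ a.e.\ along the full sequence; what is true is that $\int_{B(0,R)}|w(x+z_n)|^p\dx=\int_{B(z_n,R)}|w|^p\dx\to0$ for every $R$, so $w(\cdot+z_n)\to0$ in $L^p$ on every ball and hence a.e.\ only after extracting a (diagonal) subsequence. Consequently your claim ``$\d^\alpha\tilde\sigma_{j,n}\to0$ a.e.''\ holds only along subsequences. Since the quantity you are computing is a limit of real numbers, the standard device --- every subsequence admits a further subsequence along which the integral converges to the claimed value, hence the whole sequence converges --- closes the gap, but it should be stated. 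With that fix the proof is complete.
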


\begin{remark}
\rm 
Regarding a growth condition of the primitive function $F_\alpha$, it follows from~\eqref{growth condition of f} that  $F_\alpha$ satisfies that for any $\eps >0$,  there exist $q=q_{\alpha,\eps} \in ]p,p^*_{m-|\alpha|}[$  and  $C=C_{\alpha,\eps}>0$ such that 
\begin{equation}\tag{F}\label{growth condition of F}
|F_\alpha(s)| \le 
\begin{cases}
\eps (|s|^{p}+|s|^{p^*_{m-|\alpha|}}) + C|s|^{q},  &s \in \R,\quad \mbox{if}\ p^*_{m-|\alpha|} <\infty, \\
\eps |s|^{p} + C|s|^{q}, &s \in \R,  \quad \mbox{if}\ p^*_{m-|\alpha|}= \infty.   
\end{cases}
\end{equation}
Here one can take $q_{\alpha,\eps}$ above as the same exponent as in~\eqref{growth condition of f}.
\end{remark}

Secondly, we shall generalize conditions of continuous functions, and consequently, we obtain the same results.

\begin{theorem}[Decomposition of integral functionals~2]
\label{theorem;Lp estim.2}
Let $(u_n)$ be a bounded sequence  in $\Wmp$ 
and suppose that, on a subsequence (still denoted by $n$), $(u_n)$ has a profile decomposition with profile elements 
$(\bbl{w}{l}, \dsl{y}{l}{n}, \Lambda )\in \Wmp \times \ZN\times (\N\cup\{0,\infty\})$   
$( l \in \NN^{<\Lambda +1}$, $n \in \Z_{ \ge l})$
as in \cref{theorem;profile-decomp. in Wmp}. 
Let $\alpha\in(\NN)^N$ be a multi-index satisfying $|\alpha|<m$.
\begin{enumerate}
    \item Suppose that $f_\alpha(x,s) \in C(\RN \! \times \R;\R)$ satisfies the following growth condition in $s$ and invariance condition in $x$: 
    \begin{equation}\tag{f'}\label{growth cond. of f 2.1}
    \l\{
    \begin{aligned}
    &\bullet \forall\, \eps >0,\  \exists\, q=q_{\alpha,\eps} \in ]p,p^*_{m-|\alpha|}[, \ \exists\,  C=C_{\alpha,\eps}>0 \ \mbox{s.t.} \ \\
    & \sup_{x\in\RN}|f_\alpha(x,s)|\le
\begin{cases}
\eps (|s|^{p-1}+|s|^{(p^*_{m-|\alpha|})-1}) + C|s|^{q-1},  & \mbox{if}\  p^*_{m-|\alpha|}<\infty, \\
\eps |s|^{p-1} + C|s|^{q-1}, & \mbox{if}\ p^*_{m-|\alpha|}= \infty,   
\end{cases} \\
    &\bullet f_\alpha(x+y,s)=f_\alpha(x,s), 
    \s y \in \ZN, \  x \in \RN, \  s \in \R.
    \end{aligned} 
    \r.
    \end{equation}
    Then for a primitive function 
    \begin{equation*}
    F_\alpha(x,s) \coloneqq \int_0^s f_\alpha(x,t) \,\dd t, \s x \in \RN, \ s \in \R,
    \end{equation*}
    the following relation holds true: 
    \begin{equation*}
    \lim_{n \toinfty} \int_{\RN} F_\alpha(x,\d^\alpha u_n)  \,\dd x  
    = \sum_{l=0}^\Lambda \int_{\RN} F_\alpha(x,\d^\alpha w^{(l)})  \,\dd x .
    \end{equation*}
\item Suppose that  $f_\alpha(x,s) \in C(\RN \! \times \R;\R)$ satisfies the following growth condition in $s$ and limit state condition in $x$: 
    \begin{equation}\tag{f''}\label{growth cond. of f 2.2}
    \l\{
    \begin{aligned}
    &\bullet \forall\, \eps >0, \ \exists\, q=q_{\alpha,\eps} \in ]p,p^*_{m-|\alpha|}[, \ \exists\, C=C_{\alpha,\eps}>0  \ \mbox{s.t.}  \\
    & \sup_{x\in\RN}|f_\alpha(x,s)|\le
\begin{cases} 
\eps (|s|^{p-1}+|s|^{(p^*_{m-|\alpha|})-1}) + C|s|^{q-1},  &\!\! \mbox{if}\ p^*_{m-|\alpha|} <\infty, \\
\eps |s|^{p-1} + C|s|^{q-1}, &\!\! \mbox{if}\ p^*_{m-|\alpha|} = \infty,   
\end{cases} \\
&\bullet f_{\alpha,\infty}(s) \coloneqq \lim_{|x|\toinfty}f_\alpha(x,s) \s \mbox{exists for all} \  s \in \R.
\end{aligned} 
\r.
\end{equation}
Then for primitive functions 
\begin{equation}\notag
\l\{
\begin{alignedat}{2}
F_\alpha(x,s) &\coloneqq \int_0^s f_\alpha(x,t)\,\dd t, &\quad &x \in \RN, \s s \in \R,\\
F_{\alpha,\infty}(s) &\coloneqq \int_0^s f_{\alpha,\infty}(t)\,\dd  t, &\quad  &s \in \R,\\
\end{alignedat}
\r.
\end{equation}
the following relation holds true: 
\begin{equation}\notag 
\lim_{n \toinfty} \int_{\RN} F_\alpha(x,\d^\alpha u_n)  \,\dd x  
= \int_{\RN}F_\alpha(x,\d^\alpha \bbl{w}{0} ) \,\dd x  + \sum_{l=1}^\Lambda  \int_{\RN} F_{\alpha,\infty}(\d^\alpha \bbl{w}{l})  \,\dd x . 
\end{equation}
\end{enumerate}
\end{theorem}

One can prove many other variants of the above results in the same way, and we shall exhibit the following typical variants.

\begin{proposition}\label{prop;massdecomp1}
Along the profile decomposition in $W^{m,p}(\RN)$ $(m>1)$, there hold 
\begin{alignat*}{2}
&\lim_{n\toinfty}\int_{\RN} |\nabla u_n|^q \,\dd x = \sum_{l=0}^\Lambda \int_{\RN} |\nabla \bbl{w}{l}|^q \,\dd x, &\quad &q\in ]p,p^*_{m-1}[, \\ 
&\lim_{n\toinfty}\|u_n\|_{W^{k,q}(\RN)}^q=\sum_{l=0}^\Lambda \|\bbl{w}{l}\|_{W^{k,q}(\RN)}^q, &\quad &q\in]p,p^*_{m-k}[, \  k\in\NN, \ k<m.
\end{alignat*}
\end{proposition}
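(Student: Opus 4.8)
The plan is to rerun, for these two integrands, the strategy sketched in the Brief Summary: combine the elementary Euclidean-norm inequalities~\eqref{massdecoineq1} and~\eqref{massdecoineq2} (in their vector form~\eqref{202108060010}) with the mutual orthogonality condition of \cref{theorem;profile-decomp. in Wmp} and the vanishing of the residual term's derivatives furnished by~\eqref{eq;residue vanish Wmp}.

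For the first identity, I would start from $\nabla u_n = \sum_{l=0}^L \nabla \bbl{w}{l}(\cdot - \dsl{y}{l}{n}) + \nabla r^L_n$ (differentiation commutes with translation) and apply~\eqref{202108060010} together with~\eqref{massdecoineq2}, both read for vectors in $\RN$ with the Euclidean norm. This bounds
$$\l| \intrn |\nabla u_n|^q \dx - \sum_{l=0}^L \intrn |\nabla \bbl{w}{l}|^q \dx \r|$$
by $\eps \intrn |\sum_{l=0}^L \nabla \bbl{w}{l}(\cdot - \dsl{y}{l}{n})|^q \dx$, a constant multiple of $\intrn |\nabla r^L_n|^q \dx$, and the cross terms $\intrn |\nabla \bbl{w}{l}(\cdot - \dsl{y}{l}{n})| \, |\nabla \bbl{w}{k}(\cdot - \dsl{y}{k}{n})|^{q-1} \dx$ for $l \neq k$. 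The cross terms tend to $0$ as $n\toinfty$ because the translated supports separate, by the mutual orthogonality condition (cf.~\eqref{202105020100}); the $\eps$-term is uniformly bounded via the continuous embedding $\Wmp \hookrightarrow W^{1,q}(\RN)$ (valid for $q \in ]p, p^*_{m-1}[$, cf. \cref{lemma;G-complconti-lowerorderderiv2}) and the boundedness of $(u_n)$ and $(r^L_n)$; and $\intrn |\nabla r^L_n|^q \dx \le C \sum_{i=1}^N \intrn |\d_i r^L_n|^q \dx \to 0$ as $n\toinfty$ and then $L\to\Lambda$ by~\eqref{eq;residue vanish Wmp} with $|\alpha| = 1 < m$. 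Passing to the limits in the order $n\toinfty$, $L\to\Lambda$, $\eps\to 0$ then gives the claim, once one knows that $\sum_{l=0}^\Lambda \intrn |\nabla \bbl{w}{l}|^q \dx$ actually converges; this last point follows from the energy estimate~\eqref{eq;energy estim Wmp} and the same embedding, since $\|\nabla \bbl{w}{l}\|_{L^q(\RN)}^q \le C \|\bbl{w}{l}\|_{\Wmp}^q \le C\|\bbl{w}{l}\|_{\Wmp}^p$ for $l$ large (where the norm is $\le 1$) and $\sum_l \|\bbl{w}{l}\|_{\Wmp}^p < \infty$.

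For the second identity, I would write $\|u_n\|_{W^{k,q}(\RN)}^q = \sum_{|\alpha| \le k} \intrn |\d^\alpha u_n|^q \dx$ and treat each $\alpha$ with $|\alpha| \le k < m$ separately. Because $p^*_{m-|\alpha|} \ge p^*_{m-k}$ by the increasing property of the Sobolev critical exponents, the given $q \in ]p, p^*_{m-k}[$ lies in $]p, p^*_{m-|\alpha|}[$, so the scalar decomposition~\eqref{massdeco2} (equivalently \cref{theorem;Lp estim.} applied with $F_\alpha(s) = |s|^q$) yields $\lim_{n\toinfty} \intrn |\d^\alpha u_n|^q \dx = \sum_{l=0}^\Lambda \intrn |\d^\alpha \bbl{w}{l}|^q \dx$. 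Summing over the finitely many multi-indices with $|\alpha| \le k$ and interchanging this finite sum with the (possibly infinite) sum over $l$ gives the result.

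The genuinely new point, and the only real obstacle, is the vector-valued nature of $|\nabla \cdot|^q$: this integrand is not of the scalar form $F_\alpha(\d^\alpha u)$ covered by \cref{theorem;Lp estim.}, so one cannot quote that theorem directly and must instead rerun its proof using the Euclidean-norm version~\eqref{202108060010} of the expansion inequality. The remaining care is bookkeeping: justifying convergence of the infinite profile sum (handled by~\eqref{eq;energy estim Wmp}, the embedding, and $q > p$) and fixing the order of the limits in $n$, $L$ and $\eps$.
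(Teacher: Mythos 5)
Your proposal is correct and follows essentially the same route the paper intends: the paper offers no separate proof of \cref{prop;massdecomp1} beyond the remark that it is proved ``in the same way'' as \cref{theorem;Lp estim.}, and you carry out exactly that program---the termwise application of \cref{theorem;Lp estim.} with $F_\alpha(s)=|s|^q$ for the $W^{k,q}$ identity, and a rerun of the same expansion argument with the vector (Euclidean-norm) form of~\eqref{202108060010} together with~\eqref{eq;100506} and~\eqref{eq;residue vanish Wmp} for the gradient identity. Your explicit attention to the vector-valued integrand $|\nabla u|^q$ and to the summability of $\sum_l\|\nabla\bbl{w}{l}\|_{L^q}^q$ via~\eqref{eq;energy estim Wmp} and $q>p$ fills in precisely the bookkeeping the paper leaves implicit.
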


\subsection{Proof of \cref{theorem;Lp estim.}}
In this and the following subsections, 
we shall prove \cref{theorem;Lp estim.,theorem;Lp estim.2} without relying on pointwise convergence. 
We shall prove the results of decomposition of integral functionals above in the special case $|\alpha|=0$ and we abbreviate $f_\alpha$ to $f$ and so on; the other cases will be readily shown in the same way.
Note that proofs provided below are concerned with only the case $\ppm <\infty$. The other case $\ppm = \infty$ can also be proved in the same manner. 

\paragraph{Basic lemmas}
First of all, we shall prove the following important inequalities which correspond to~\eqref{massdecoineq1} and~\eqref{massdecoineq2}. 

\begin{lemma}\label{lemma;power estimates of F 2}
Assume the same conditions as in \cref{theorem;Lp estim.}. Then for any $\eps >0$, there exist $q_1=q_{1,\eps} \in ]p,\ppm[$ and $C=C_\eps>0$ such that for all $a,b \in \R$,  
\begin{align}
\label{ineq;power estimates of F 2}
&\left|F (a+b)-F(a) \right| 
\\
&\notag 
\le \eps \left( |a|^{p-1}|b|+|a|^{\ppm-1}|b|+|b|^p+|b|^{\ppm} \right) 
+C (|a|^{q_1-1}|b|+|b|^{q_1}). 
\end{align}
\end{lemma}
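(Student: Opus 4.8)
The plan is to reduce everything to the fundamental theorem of calculus. Since $F_\alpha$ (which here I abbreviate to $F$, following the convention of the proofs in this subsection) is a primitive of $f$, I would write, for any $a,b\in\R$,
\[
F(a+b)-F(a)=\int_0^1 \frac{d}{dt}F(a+tb)\,dt = b\int_0^1 f(a+tb)\,dt,
\]
so that $|F(a+b)-F(a)|\le |b|\int_0^1 |f(a+tb)|\,dt$. The whole matter then reduces to estimating $|f(a+tb)|$ uniformly in $t\in[0,1]$ by means of the growth hypothesis~\eqref{growth condition of f}, and this is where the exponents $p$, $\ppm$ and the intermediate $q$ enter.

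The key elementary fact I would invoke is that, since $|a+tb|\le |a|+|b|$ for $t\in[0,1]$, one has $(|a|+|b|)^{r-1}\le C_r\bigl(|a|^{r-1}+|b|^{r-1}\bigr)$ for every fixed exponent $r\ge 1$ and all $a,b\in\R$, with $C_r = 2^{r-2}$ if $r\ge 2$ and $C_r=1$ if $1\le r\le 2$ (subadditivity of concave powers in the latter range, convexity in the former). I would apply this with $r=p$, $r=\ppm$, and $r=q$; note that $p>1$, $\ppm>p$, and $q\in\,]p,\ppm[$ all exceed $1$, so each $C_r$ is well defined.

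The one point genuinely requiring care—and the only real obstacle—is the bookkeeping of the small parameter: the coefficient $\eps$ in~\eqref{ineq;power estimates of F 2} must stay attached \emph{precisely} to the four extreme terms $|a|^{p-1}|b|$, $|a|^{\ppm-1}|b|$, $|b|^p$, $|b|^{\ppm}$, whereas the constant $C_r$ from the elementary inequality would otherwise inflate that coefficient. The remedy is to fix $C_0\coloneqq \max\{C_p,C_{\ppm}\}\ge 1$ \emph{first}, and only then invoke~\eqref{growth condition of f} with $\eps/C_0$ in place of $\eps$; this supplies an exponent $q_1\coloneqq q_{\alpha,\eps/C_0}\in\,]p,\ppm[$ and a constant $C'=C_{\alpha,\eps/C_0}>0$ with $|f(s)|\le (\eps/C_0)(|s|^{p-1}+|s|^{\ppm-1})+C'|s|^{q_1-1}$. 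Substituting $s=a+tb$, applying the elementary inequality to the three powers, and multiplying through by $|b|$ then gives, uniformly in $t\in[0,1]$,
\[
|b|\,|f(a+tb)| \le \eps\bigl(|a|^{p-1}|b|+|b|^p+|a|^{\ppm-1}|b|+|b|^{\ppm}\bigr) + C'C_{q_1}\bigl(|a|^{q_1-1}|b|+|b|^{q_1}\bigr).
\]

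Finally I would integrate this bound over $t\in[0,1]$; the right-hand side is independent of $t$, so the integration is trivial, and setting $C\coloneqq C'C_{q_1}$ yields exactly~\eqref{ineq;power estimates of F 2}. The argument is essentially a one-line application of the fundamental theorem of calculus dressed up with elementary power inequalities; the only subtlety is that $q_1$ and $C$ must be selected \emph{after} $\eps$ is given, which is consistent with the order of the quantifiers in the statement.
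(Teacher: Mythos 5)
Your proposal is correct and follows essentially the same route as the paper: the paper writes $F(a+b)-F(a)=f(a+\theta b)\,b$ via the mean value theorem (your integral form of the fundamental theorem of calculus is an equivalent starting point), bounds $|f(a+\theta b)|$ by the growth condition~\eqref{growth condition of f} applied with a parameter $\delta$, splits the powers of $|a+\theta b|$ exactly as you do, and then sets $\delta=\eps/K$ where $K$ is the accumulated elementary constant — the same quantifier bookkeeping you carry out with $C_0=\max\{C_p,C_{\ppm}\}$. No gaps.
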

    
\begin{proof}
Fix $\delta>0$ arbitrarily. 
For any $a,b\in\R$, the mean value theorem implies 
\begin{equation*}
    \left|F (a+b)-F(a) \right|
    =|f(a+\theta b)||b|
\end{equation*}
for some $\theta = \theta_{a,b} \in [0,1]$.
It follows from~\eqref{growth condition of f} that 
\begin{align}
\label{202103170010}
    &\left|F (a+b)-F(a) \right| \\
    &\notag \le 
    \qty[ \delta ( |a+\theta b|^{p-1}+|a+\theta b|^{\ppm-1}  ) + C_\delta |a+\theta b|^{q_\delta-1}   ]|b| \\
    &\notag \le
    K \delta  \qty(|a|^{p-1}+|a|^{\ppm-1}+|b|^{p-1}+|b|^{\ppm-1})|b|+ C_{\delta} (|a|^{q_\delta-1}|b|+|b|^{q_\delta}), 
\end{align}
where $K>0$ is independent of $\delta$ and $q_\delta$. 
Then for any $\eps>0$, set $\delta = \eps/K$ and we then observe from~\eqref{202103170010} that 
\begin{align*}
    &\left|F (a+b)-F(a) \right| \\
    &\le \eps \qty(|a|^{p-1}+|a|^{\ppm-1}
    +|b|^{p-1}+|b|^{\ppm-1})|b|
    +C_{\eps} (|a|^{q_{\eps/K}-1}|b|+|b|^{q_{\eps/K}}).
\end{align*}
Thus letting $q_1 \coloneqq q_{\eps/K}$, one concludes~\eqref{ineq;power estimates of F 2}.
\end{proof}

\begin{lemma}\label{lemma;power estimates of F}
Assume the same conditions as in \cref{theorem;Lp estim.}. Let $L \in \N$, fix $\eps >0$ arbitrarily and let $q=q_\eps \in ]p,\ppm[$ be an exponent corresponding to~\eqref{growth condition of f}. Then there exists $C_{\eps,L} >0$ such that 
for all $s_l \in \R, \  1 \le l \le L$,
\begin{equation}\label{ineq;power estimates of F}
\left|F\left(\sum_{l=1}^L s_l \right)-\sum_{l=1}^L F(s_l)\right| 
\le \eps C_L   \left(\sum_{l=1}^L(|s_l|^p+|s_l|^{\ppm}) \right) 
+ C_{\eps,L} \sum_{1\le l\neq k\le L} |s_l|^{q_\eps-1}|s_k| 
\end{equation}
for some constant $C_L>0$. 
\end{lemma}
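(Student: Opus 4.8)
The plan is to argue by induction on $L$, reducing the $L$-term statement to a single two-term \emph{interaction estimate}. Concretely, the heart of the matter is to show that for every $\eps>0$ there are an exponent $q=q_\eps\in{]p,\ppm[}$ and a constant $C_\eps>0$ with
\[
|F(a+b)-F(a)-F(b)|\le \eps\l(|a|^p+|b|^p+|a|^{\ppm}+|b|^{\ppm}\r)+C_\eps\l(|a|^{q-1}|b|+|a|\,|b|^{q-1}\r)
\]
for all $a,b\in\R$. Once this is in hand, the full statement follows by setting $\sigma_j\coloneqq\sum_{l=1}^j s_l$ and splitting
\[
F(\sigma_L)-\sum_{l=1}^L F(s_l)=\l[F(\sigma_{L-1}+s_L)-F(\sigma_{L-1})-F(s_L)\r]+\l[F(\sigma_{L-1})-\sum_{l=1}^{L-1}F(s_l)\r],
\]
where the second bracket is handled by the induction hypothesis and the first by the interaction estimate.

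The interaction estimate is the main obstacle, and it is exactly where a naive telescoping breaks down: bounding $F(\sigma_{L-1}+s_L)-F(\sigma_{L-1})$ by \cref{lemma;power estimates of F 2} and then subtracting $F(s_L)$ via the growth bound (F) leaves a \emph{pure} power $C|s_L|^{q}$, which cannot be absorbed into $\eps(|s_L|^p+|s_L|^{\ppm})$ for small $\eps$. The remedy is to split on the regimes $\{|b|\le|a|\}$ and $\{|a|\le|b|\}$ and always estimate the nonlinearity at the \emph{larger} argument. On $\{|b|\le|a|\}$ I would apply \cref{lemma;power estimates of F 2} with perturbation $b$ together with $|F(b)|\le\eps(|b|^p+|b|^{\ppm})+C|b|^{q}$; the $\eps$-weighted pure powers $|b|^p,|b|^{\ppm}$ are already of diagonal form, the $\eps$-weighted mixed terms $|a|^{p-1}|b|,|a|^{\ppm-1}|b|$ become diagonal via Young's inequality, and the only dangerous term, the $C$-weighted pure power $|b|^{q}$, is converted into a cross term through $|b|^{q}=|b|^{q-1}|b|\le|a|^{q-1}|b|$, valid since $|b|\le|a|$ and $q>1$. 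The regime $\{|a|\le|b|\}$ is symmetric (the left side is symmetric in $a,b$), and combining the two yields the displayed estimate.

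For the inductive step I would insert the interaction estimate with $a=\sigma_{L-1}$, $b=s_L$ and use $|\sigma_{L-1}|\le\sum_{l<L}|s_l|$. With the elementary power-mean bounds $\bigl(\sum_{l<L}|s_l|\bigr)^{r}\le C_L\sum_{l<L}|s_l|^{r}$ for $r\ge 0$, the diagonal contributions $|\sigma_{L-1}|^{p},|\sigma_{L-1}|^{\ppm}$ collapse to $C_L\sum_{l<L}(|s_l|^p+|s_l|^{\ppm})$, while the cross contributions $|\sigma_{L-1}|^{q-1}|s_L|$ and $|\sigma_{L-1}|\,|s_L|^{q-1}$ expand into genuine cross terms $|s_l|^{q-1}|s_L|$ and $|s_l|\,|s_L|^{q-1}$ with $l<L$. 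Adding the induction hypothesis and enlarging $C_L$ (depending only on $L$) and $C_{\eps,L}$ accordingly closes the induction, the base case $L=1$ being trivial since the left side vanishes; throughout, a single exponent $q=q_\eps$ fixed once $\eps$ is chosen is reused at every level. Finally, specializing to $F=|\cdot|^p$ and retaining only the cross-term part (``$\eps=0$'') recovers the elementary inequality \eqref{202108060010} invoked in \cref{lemma;bdd residue Wmp}, as asserted there.
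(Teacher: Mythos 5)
Your proposal is correct and follows essentially the same route as the paper: induction on $L$ whose heart is the two-term estimate $|F(a+b)-F(a)-F(b)|\le C\eps\,(|a|^p+|a|^{\ppm}+|b|^p+|b|^{\ppm})+C_\eps(|a|^{q-1}|b|+|a|\,|b|^{q-1})$, proved by reducing (via symmetry) to the regime $|b|\le|a|$, estimating $F(a+b)-F(a)$ through the mean value theorem and the growth condition~\eqref{growth condition of f}, and converting the dangerous pure power $|b|^{q}$ into the cross term $|a|^{q-1}|b|$. Your inductive step, taking $a=\sum_{l\le L}s_l$, $b=s_{L+1}$ and expanding via the power-mean bounds, is exactly the paper's.
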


\begin{proof}
We prove \eqref{ineq;power estimates of F} by  induction on $L$.

\paragraph{(I) Base step: $L=2$}
We shall prove that for any $a,b \in \R$,  
\begin{align}
\label{ineq;power estim. of F of (I)}
&\left|F(a+b)-F(a)-F(b)\right| \\
&\notag \le 
C \eps (|a|^p+|a|^{\ppm}+|b|^p+|b|^{\ppm})
+C_\eps (|a|^{q_\eps-1}|b|+|a||b|^{q_\eps -1}) 
\end{align}
for some positive constant $C$ independent of $\eps$ and $q_\eps$. 
Since $F(0)=0$,~\eqref{ineq;power estim. of F of (I)} is true if $a=0$ or $b=0$. 
So we assume $a,b \neq 0$ and thanks to the symmetry, assume that  
$t=|b|/|a| \le 1$. Then by the mean value theorem,
we have
\begin{equation}\notag
F(a+b)-F(a) = f(a+\theta b)b \quad \mbox{for some} \s \theta = \theta_{a,b} \in [0,1].
\end{equation}
It follows from~\eqref{growth condition of f} and~\eqref{growth condition of F} that  
\begin{align*}
&|F(a+b)-F(a)-F(b)| \\ 
&\le |f(a+\theta b)||b|+|F(b)|\\
&\le \qty[\eps (|a+\theta b|^{p-1} +|a+\theta b|^{\ppm-1}) +C_\eps  |a+\theta b|^{q_\eps -1}]|b|
\\
&\notag \qquad 
+\eps(|b|^p+|b|^{\ppm})+C_\eps |b|^{q_\eps} \\
&\le C \eps  (|a|^p+|a|^{\ppm})
+C_\eps |a|^{q_\eps -1}|b|,    
\end{align*}
where we used the assumption $|b| \le |a|$ in the last inequality.
Thus \eqref{ineq;power estim. of F of (I)} is verified.

\paragraph{(II) Inductive step}
Assume that 
\eqref{ineq;power estimates of F} is valid for some $L \in \N$.
Setting 
$a=\sum_{l=1}^L s_l$ and using~\eqref{ineq;power estim. of F of (I)} and the induction hypothesis, we observe that 
\begin{align*}
&\left|F\left(\sum_{l=1}^{L+1} s_l \right)-\sum_{l=1}^{L+1} F(s_l)\right| 
=\left|F\left(a+ s_{L+1} \right)-\sum_{l=1}^{L+1} F(s_l)\right|\\
&\le \left|F\left(a+ s_{L+1} \right) -F(a)-F(s_{L+1}) \right|
+ \left|F(a)-\sum_{l=1}^{L} F(s_l)\right|\\
&\le \eps C   (|a|^p+|a|^{\ppm}+|s_{L+1}|^p+|s_{L+1}|^{\ppm})
+C_{\eps} (|a|^{q_\eps-1}|s_{L+1}|+|a||s_{L+1}|^{q_\eps -1})\\
 &\qquad \qquad \qquad + \eps C_L    \left(\sum_{l=1}^L(|s_l|^p+|s_l|^{\ppm}) \right) 
+C_{\eps,L} \sum_{1\le l\neq k\le L} |s_l|^{q_\eps -1}|s_k|\\
&\le  \eps  C_{L+1}  \left(\sum_{l=1}^{L+1}(|s_l|^p+|s_l|^{\ppm}) \right) 
+C_{\eps, L+1} \sum_{1\le l\neq k \le L+1} |s_l|^{q_\eps -1}|s_k|.    
\end{align*}
Thus \eqref{ineq;power estimates of F} holds when $L$ is replaced by $L+1$. 
This completes the proof. 
\end{proof}

We need a modification to the preceding lemma. 

\begin{lemma} \label{lemma;lemma;power estimates of F modified}
Assume the same conditions as in \cref{theorem;Lp estim.}. Let $L\in\N$. Then for any $\eps>0$, there exist $q_L = q_{\eps,L} \in ]p,\ppm[$ and $C=C_{\eps,L}>0$ such that for all $s_l \in \R$, $1\le l \le L$, 
\begin{equation}\notag 
\left|F\left(\sum_{l=1}^L s_l \right)-\sum_{l=1}^L F(s_l)\right| 
\le \eps  \left(\sum_{l=1}^L(|s_l|^p+|s_l|^{\ppm}) \right) 
+ C \sum_{1\le l\neq k\le L} |s_l|^{q_L-1}|s_k|.
\end{equation}
\end{lemma}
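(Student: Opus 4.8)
The plan is to obtain this as an immediate consequence of \cref{lemma;power estimates of F} by rescaling the free parameter $\eps$. Recall that \cref{lemma;power estimates of F} produces, for the given $L$, a constant $C_L>0$ depending \emph{only} on $L$, which sits in front of $\eps$ in the first group of terms. Since $C_L$ is strictly positive and independent of the auxiliary parameter, I can absorb it by applying the previous lemma to a suitably shrunk value of $\eps$.

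Concretely, I would fix $\eps>0$ and $L\in\N$, let $C_L>0$ be the constant furnished by \cref{lemma;power estimates of F}, and apply that lemma with $\eps$ replaced by $\tilde\eps \coloneqq \eps/C_L$. This yields an exponent $q_{\tilde\eps}\in ]p,\ppm[$ (coming from the growth condition~\eqref{growth condition of f}) and a constant $C_{\tilde\eps,L}>0$ such that, for all $s_l\in\R$,
\begin{equation*}
\l|F\l(\sum_{l=1}^L s_l\r)-\sum_{l=1}^L F(s_l)\r|
\le \tilde\eps\,C_L\l(\sum_{l=1}^L(|s_l|^p+|s_l|^{\ppm})\r)
+C_{\tilde\eps,L}\sum_{1\le l\neq k\le L}|s_l|^{q_{\tilde\eps}-1}|s_k|.
\end{equation*}
The coefficient of the first sum is now exactly $\tilde\eps\,C_L=\eps$, so setting $q_L\coloneqq q_{\tilde\eps}$ and $C\coloneqq C_{\tilde\eps,L}$ gives precisely the claimed inequality.

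There is essentially no obstacle here: the only point deserving a word of justification is that $C_L$ is a fixed positive constant independent of the parameter being shrunk, so that the substitution $\eps\mapsto\eps/C_L$ is legitimate and leaves the structure of the estimate intact. I would also remark that the exponent $q_L$ is now permitted to depend on both $\eps$ and $L$, which is exactly what the statement allows; it is precisely this flexibility that makes the clean coefficient $\eps$ (in place of $\eps C_L$) attainable, and this cleaner form is what is convenient when the inequality is subsequently summed against the mutual orthogonality condition in the proof of \cref{theorem;Lp estim.}.
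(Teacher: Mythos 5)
Your proposal is correct and is essentially identical to the paper's own proof: the paper likewise sets $\delta=\eps/C_L$, applies \cref{lemma;power estimates of F} with $\delta$ in place of $\eps$, and defines $q_L\coloneqq q_{\eps/C_L}$. Your remark that the legitimacy of the substitution rests on $C_L$ depending only on $L$ (and not on the shrunk parameter) is exactly the right point to flag.
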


\begin{proof}
For any $\eps>0$, set $\delta = \eps/C_L$, where $C_L$ is as in \cref{lemma;power estimates of F}, and we observe from~\eqref{ineq;power estimates of F} that 
\begin{align}\notag 
\left|F\left(\sum_{l=1}^L s_l \right)-\sum_{l=1}^L F(s_l)\right| 
&\le \delta  C_L \left(\sum_{l=1}^L(|s_l|^p+|s_l|^{\ppm}) \right) + C_{\delta,L} \sum_{1\le l\neq k\le L} |s_l|^{q_\delta-1}|s_k| \\
&=\eps \left(\sum_{l=1}^L(|s_l|^p+|s_l|^{\ppm}) \right) + C_{\eps,L} \sum_{1\le l\neq k\le L} |s_l|^{q_{L}-1}|s_k|,
\end{align}
where $q_L = q_{\eps,L} \coloneqq q_{\eps/{C_L}}$. 
Thus one can conclude~\eqref{lemma;lemma;power estimates of F modified}.
\end{proof}

Combining \cref{lemma;power estimates of F 2,lemma;lemma;power estimates of F modified} with the Young inequality, one can obtain the following 

\begin{lemma}
Let $F$ be as in \cref{theorem;Lp estim.} and let $L \in \N$. Then for any $\eps>0$, there exist $q_1=q_{1,\eps} \in ]p, \ppm[$, $q_L=q_{L,  \eps } \in ]p,  \ppm[$ and $C_{\eps}, C_{\eps,L} >0$ such that 
for all $s_l, r \in \R$ $(l=1,\ldots, L)$, 
\begin{align}
\label{ineq;100}
    &\l|F \l(\sum_{l=1}^L s_l +r \r) - \sum_{l=1}^L F(s_l)  \r| \\ 
    &\notag \le 
    \eps C_p \l(
        \l| \sum_{l=1}^L s_l \r|^p + \l| \sum_{l=1}^L s_l \r|^{\ppm} + \sum_{l=1}^L |s_l|^p + \sum_{l=1}^L |s_l|^{\ppm} + |r|^p  + |r|^{\ppm} 
    \r) \\ 
    &\notag \qquad +C_\eps \left( 
        |r|^{q_1} +  \l| \sum_{l=1}^L s_l \r|^{q_1-1} |r| \right)
        + C_{\eps,L} \sum_{1 \le l \neq l' \le L} |s_l||s_{l'}|^{q_L-1},
\end{align}
where $C_p >0$ depends only on $p$ and $C_\eps$ depends only on $\eps$. 
\end{lemma}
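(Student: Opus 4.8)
The plan is to split the left-hand side by the triangle inequality, setting $a \coloneqq \sum_{l=1}^L s_l$ and treating $r$ as the perturbation, and then to apply the two preceding power estimates to the two resulting pieces. Writing
\[
\l|F\l(\sum_{l=1}^L s_l + r\r) - \sum_{l=1}^L F(s_l)\r|
\le \l|F(a+r)-F(a)\r| + \l|F(a) - \sum_{l=1}^L F(s_l)\r|,
\]
I would bound the first summand by \cref{lemma;power estimates of F 2} (with this choice of $a$ and with $b=r$), which yields a term of the shape $\eps(|a|^{p-1}|r| + |a|^{\ppm-1}|r| + |r|^p + |r|^{\ppm}) + C_\eps(|a|^{q_1-1}|r| + |r|^{q_1})$ with $q_1 = q_{1,\eps}\in]p,\ppm[$, and bound the second summand by \cref{lemma;lemma;power estimates of F modified}, which produces $\eps\sum_{l=1}^L(|s_l|^p + |s_l|^{\ppm}) + C_{\eps,L}\sum_{1\le l\neq k\le L}|s_l|^{q_L-1}|s_k|$ with $q_L = q_{\eps,L}\in]p,\ppm[$.

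The remaining work is purely cosmetic: the mixed terms $|a|^{p-1}|r|$ and $|a|^{\ppm-1}|r|$ still need to be replaced by pure powers so that they fit under the $\eps C_p(\cdots)$ bracket of~\eqref{ineq;100}. For this I would invoke the Young inequality, which gives $|a|^{p-1}|r| \le C_p(|a|^p + |r|^p)$ and $|a|^{\ppm-1}|r| \le C(|a|^{\ppm} + |r|^{\ppm})$ with constants depending only on the (fixed) exponents $p$ and $\ppm$. Substituting these, absorbing the bare $\eps(|r|^p + |r|^{\ppm})$ into the same bracket, and recalling $a = \sum_{l=1}^L s_l$, the first piece becomes exactly $\eps C_p(|\sum_l s_l|^p + |\sum_l s_l|^{\ppm} + |r|^p + |r|^{\ppm}) + C_\eps(|\sum_l s_l|^{q_1-1}|r| + |r|^{q_1})$. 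The term $|a|^{q_1-1}|r|$ needs no manipulation, since it is already of the advertised form $|\sum_l s_l|^{q_1-1}|r|$.

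Adding the two bounds and relabelling the double sum via $\sum_{l\neq k}|s_l|^{q_L-1}|s_k| = \sum_{l\neq l'}|s_l||s_{l'}|^{q_L-1}$ then reproduces~\eqref{ineq;100}, with $C_p$ collecting the Young constants (depending only on $p$, as $\ppm$ is fixed by $p,m,N$) and with $C_\eps$, $C_{\eps,L}$ inherited from the two lemmas. I do not anticipate any genuine obstacle: the statement is essentially a repackaging of \cref{lemma;power estimates of F 2,lemma;lemma;power estimates of F modified}, and the only point requiring mild care is keeping the two distinct subcritical exponents $q_1$ and $q_L$, and the three classes of constants, separate — this separation matters because the eventual application sends $n\toinfty$, then $L\to\Lambda$, and only afterwards $\eps\to 0$.
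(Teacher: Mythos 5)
Your argument is correct and is exactly what the paper intends: the paper's own (unwritten) proof is precisely ``combine \cref{lemma;power estimates of F 2,lemma;lemma;power estimates of F modified} with the Young inequality,'' which is the triangle-inequality split at $a=\sum_{l}s_l$ followed by Young's inequality on the mixed terms that you carry out. The bookkeeping of the exponents $q_1$, $q_L$ and the three constants is handled as the paper requires.
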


\paragraph{Main body}
Using the above inequalities we can prove \eqref{eq;Lp estim.}.

\begin{proof}[Proof of \cref{theorem;Lp estim.}]
Set 
\begin{equation}\notag
E\coloneqq \sup_{n \in \NN}\|u_n\|_{\Wmp}, \quad 
S^L_n \coloneqq \sum_{l=0}^L \bbl{w}{l}(\cdot-\dsl{y}{l}{n}), \quad  L \in \NN^{<\Lambda +1}, \ 
n \in \Z_{\ge L}, 
\end{equation}
and then one has the relation $u_n = S^L_n + r^L_n$. 
Fix $\eps>0$ arbitrarily and let $q_1, q_L \in  ]p,\ppm[$ be exponents given by \cref{lemma;power estimates of F 2,lemma;lemma;power estimates of F modified}.
Using~\eqref{ineq;100} and the H\"older inequality, one sees that 
\begin{align}
&\label{10170060} 
\int_{\RN}\l| F(u_n) - \sum_{l=0}^L F(\bbl{w}{l}(\cdot - \dsl{y}{l}{n}) ) \r|  \,\dd x   \\
&\le 
\eps C_p 
\l(
    \|S^L_n \|_{L^p(\RN)}^p 
    + \| S^L_n \|_{L^{\ppm}(\RN)}^{\ppm} 
    + \sum_{l=0}^L \|\bbl{w}{l} \|_{L^p(\RN)}^p \r. \notag \\
&\hspace{2cm}   \l. + \sum_{l=0}^L \|\bbl{w}{l} \|_{L^{\ppm}(\RN)}^{\ppm} 
    +\| r^L_n \|_{L^p(\RN)}^p 
    +\| r^L_n \|_{L^{\ppm}(\RN)}^{\ppm} 
\r) \notag  \\
&\qquad + C_\eps 
\l(
    \| r^L_n \|_{L^{q_1}(\RN)}^{q_1} 
    + \l\| S^L_{n} \r\|_{L^{q_1}(\RN)}^{q_1-1} \| r^L_n \|_{L^{q_1}(\RN)} \r) \notag \\ 
&\hspace{2cm} + C_{\eps,L}  \sum_{0 \le l \neq l' \le L} 
    \int_{\RN}  |\bbl{w}{l}(x - \dsl{y}{l}{n})  ||\bbl{w}{l'}(x - \dsl{y}{l'}{n})  |^{q_L-1}  \,\dd x .
 \notag 
\end{align}

According to the estimates~\eqref{eq;residue bounded Wmp},~\eqref{202105120020} 
and the Sobolev inequality,
one gets for any $r\in[p,\ppm]$,  
\begin{equation}\label{10170070}
\l\{
\begin{aligned}
\varlimsup_{L \to \Lambda} \varlimsup_{n \toinfty} 
\|r^L_n\|_{L^{r}(\RN)}
&\le C_r 
\varlimsup_{L \to \Lambda} \varlimsup_{n \toinfty} 
\|r^L_n\|_{\Wmp}
\le C_{r} E, \\
\varlimsup_{L \to \Lambda} \varlimsup_{n \toinfty} 
\|S^L_n\|_{L^{r}(\RN)}
&\le C_r 
\varlimsup_{L \to \Lambda} \varlimsup_{n \toinfty} 
\|S^L_n\|_{\Wmp} 
\le C_r E.
\end{aligned}
\r.
\end{equation}
From~\eqref{eq;energy estim Wmp} one has 
\begin{equation}\label{202103170030}
\sum_{l=0}^\Lambda  \|\bbl{w}{l}\|_{L^p(\RN)}^p
\le E^p.
\end{equation}
From the Sobolev inequality and the embedding $\ell^p(\N) \hookrightarrow  \ell^{\ppm}(\N)$, one also has 
\begin{align}
\label{202103170040}
\sum_{l=0}^\Lambda \|\bbl{w}{l}\|_{L^{\ppm}(\RN)}^{\ppm}
&\le C_{\ppm} \sum_{l=0}^\Lambda  \|\bbl{w}{l}\|_{W^{m,p}(\RN)}^{\ppm} \\ 
&\notag \le C_{\ppm} \l( \sum_{l=0}^\Lambda  \|\bbl{w}{l}\|_{W^{m,p}(\RN)}^{p} \r)^{\ppm/p} \\
&\notag \le C_{\ppm} E^{\ppm}.
\end{align}
Also, one has the condition~\eqref{eq;residue vanish Wmp}, i.e., 
\begin{equation}\label{202105080010}
\varlimsup_{L\to \Lambda} \varlimsup_{n\toinfty} \| r^L_n \|_{L^q(\RN)}=0\quad \mbox{for any}\ q \in ]p, \ppm[.
\end{equation}

Moreover, one can check that for any $0 \le l \neq l' \in \NN^{<\Lambda+1}$ and for any $r \in [p,\ppm]$,  
\begin{equation}\label{202105080020}
\int_{\RN} |\bbl{w}{l}(x-\dsl{y}{l}{n})||\bbl{w}{l'}(x-\dsl{y}{l'}{n})|^{r-1}  \,\dd x  \to 0
\end{equation}
as $n\toinfty$. 
Combining~\eqref{10170060}--\eqref{202105080020}, and passing to the limits as $n\toinfty$, $L\to \Lambda$, and then $\eps \to 0$, one obtains 
\begin{equation}\label{202105080030}
\varlimsup_{L \to \Lambda} \varlimsup_{n\toinfty}
\int_{\RN}\l| F(u_n) - \sum_{l=0}^L F(\bbl{w}{l}(\cdot - \dsl{y}{l}{n}) ) \r|  \,\dd x  
=0.
\end{equation}
From this follows 
\begin{equation*}
\varlimsup_{L \to \Lambda} \varlimsup_{n\toinfty}
\l| 
\int_{\RN}  F(u_n)   \,\dd x  - \sum_{l=0}^L \int_{\RN} F(\bbl{w}{l})    \,\dd x  
\r|   
=0.
\end{equation*}
Eventually, the limits 
\begin{equation*}
\lim_{n\toinfty} \int_{\RN}  F(u_n)   \,\dd x , \quad 
\lim_{L \to \Lambda} \sum_{l=0}^L \int_{\RN} F(\bbl{w}{l})    \,\dd x 
\end{equation*}
exist and it follows that 
\begin{equation*}
\lim_{n\toinfty} \int_{\RN}  F(u_n)   \,\dd x  
=
\sum_{l=0}^\Lambda \int_{\RN} F(\bbl{w}{l})    \,\dd x .
\end{equation*}
This completes the proof of \cref{theorem;Lp estim.}.
\end{proof}

\subsection{Proof of \cref{theorem;Lp estim.2}}
We move on to the proof of \cref{theorem;Lp estim.2} 
which is proved based on an argument quite similar to  the preceding one.
As is similar to the preceding subsection, it is sufficient to show the  results of decomposition of integral functionals in the special case $|\alpha|=0$ and $p^*_{m}<\infty$, and we shall abbreviate $f_\alpha$ to $f$ and so on.
In what follows, we assume that $u_n, \bbl{w}{l}$ and $\dsl{y}{l}{n}$ are as in 
\cref{theorem;Lp estim.2}.

\paragraph{Assertion~(i)}
Let $f$ satisfy \eqref{growth cond. of f 2.1}, 
and let $F$ be defined by 
$$
F(x,s)= \int_0^s f(x,t) \,\dd t, \quad x \in \RN, \ s \in \R.
$$
One can show the following counterpart of 
\cref{lemma;power estimates of F 2}
in the same manner:
$\forall\, \eps >0, \  \exists\, q_1=q_{1,\eps} \in]p,\ppm[,  \ \exists\, C= C_\eps>0$ s.t.
$\forall\, a, b \in \R$, 
\begin{align}
\label{eq;prTheorem Lp estim.2 (i) 3}
&\sup_{x\in\RN} \left|F\left(x, a+b \right)
- F(x, a )\right|  \\
&\notag \le \eps \left( |a|^{p-1}|b|+|a|^{\ppm-1}|b|+|b|^p+|b|^{\ppm}  \right) 
+C ( |a|^{q_1-1}|b| + |b|^{q_1}).
\end{align}
One can also show the following counterpart of 
\cref{lemma;lemma;power estimates of F modified}
in the same manner:
$\forall\, \eps >0, \ \forall\, L\in\N, \  \exists\, q_L=q_{L,\eps} \in ]p,\ppm[, \  \exists\, C=C_{\eps,L}>0$ s.t. $\forall\, s_l\in\R$, $1\le l \le L$, 
\begin{align}
\label{eq;prTheorem Lp estim.2 (i) 1}
&\sup_{x\in\RN} \left|F\left(x,\sum_{l=1}^L s_l \right)
-\sum_{l=1}^L F(x,s_l)\right|  \\
&\notag \le \eps \left(\sum_{l=1}^L(|s_l|^p+|s_l|^{\ppm}) \right) 
+C \sum_{1\le l\neq k\le L} |s_l|^{q_L-1}|s_k|. 
\end{align}

Thanks to these two inequalities, one can verify the counterpart of~\eqref{202105080030} in the same manner, 
where one replaces $F(s)$ by $F(x,s), \ x \in \RN$,  i.e., 
\begin{equation}\notag 
\lim_{L\to \Lambda}\varlimsup_{n\toinfty} \int_{\RN}
\l| 
F(x,u_n)-\sum_{l=0}^L F(x,\bbl{w}{l}(x-\dsl{y}{l}{n})) 
\r|  \,\dd x 
=0.
\end{equation}
Owing to the $\ZN$-invariance of $f$ in $x$ and the change of variables, it follows that 
\begin{align}\notag
&\varlimsup_{L\to \Lambda}\varlimsup_{n\toinfty}
\l| \int_{\RN} F(x,u_n)  \,\dd x  - \sum_{l=0}^L \int_{\RN} F(x,w^{(l)})  \,\dd x  \r|\\
&=
\varlimsup_{L\to \Lambda}\varlimsup_{n\toinfty}
\l| \int_{\RN} F(x,u_n)  \,\dd x  - \sum_{l=0}^L \int_{\RN} F(x, \bbl{w}{l}(x-\dsl{y}{l}{n})  )  \,\dd x  \r|\\
&\le 
\varlimsup_{L\to \Lambda}\varlimsup_{n\toinfty}
\int_{\RN} \l| F(x,u_n) -\sum_{l=0}^L F( x,\bbl{w}{l}(x-\dsl{y}{l}{n})  )  \r|  \,\dd x  \\
&=0,
\end{align}
which yields 
\begin{equation}\notag
\lim_{n\toinfty} \int_{\RN} F(x,u_n)  \,\dd x  
= 
\sum_{l=0}^\Lambda \int_{\RN} F(x,w^{(l)})  \,\dd x ,  
\end{equation}
hence the conclusion.

\paragraph{Assertion~(ii)}
Let $f$ and $f_{\infty}$ satisfy \eqref{growth cond. of f 2.2}, and let $F $ and $F_{\infty}$ be defined by 
\begin{equation}\notag
\begin{alignedat}{2}
F(x,s) &= \int_0^s f(x,t) \,\dd t, &\quad &x \in \RN, \s s \in \R, \\
F_\infty (s) &= \int_0^s f_\infty (t) \,\dd t, &\quad &s \in \R.
\end{alignedat}
\end{equation}
As in the previous proof,
one can show 
\eqref{eq;prTheorem Lp estim.2 (i) 3} and \eqref{eq;prTheorem Lp estim.2 (i) 1}.
So one can verify the counterpart of~\eqref{202105080030}, 
where one replaces $F(s)$ by $F(x,s), \ x \in \RN$,  i.e., 
\begin{equation}\label{eq;prTheorem Lp estim.2 (i) 7}
\lim_{L\to \Lambda} \varlimsup_{n\toinfty} \int_{\RN}
\l| 
F(x,u_n)-\sum_{l=0}^L F(x,\bbl{w}{l}(x-\dsl{y}{l}{n})) 
\r| \,\dd x 
=0. 
\end{equation}
By changing variables, we get 
\begin{align}\notag
\int_{\RN} F(x,\bbl{w}{l}(x-\dsl{y}{l}{n}))  \,\dd x  
&=
\int_{\RN} F(x+\dsl{y}{l}{n},\bbl{w}{l}(x))  \,\dd x . \\
\end{align}

Since $|\dsl{y}{l}{n}| \toinfty$ for $l \ge 1$,  
it follows that 
\begin{equation}\notag 
F(x+\dsl{y}{l}{n},\bbl{w}{l}(x))
\to F_\infty (\bbl{w}{l}(x)) 
\quad \mbox{a.e.} \ x \in \RN \  (n\toinfty, \ l \ge 1).  
\end{equation}
Here we used the fact that 
\begin{equation}\notag 
F(x,s) \to F_\infty (s)
\quad \mbox{as \s} |x| \toinfty \ \mbox{for any} \ s \in \R,  
\end{equation}
which is proved via \eqref{growth cond. of f 2.2}
and the dominated convergence theorem.
Also, due to \eqref{growth cond. of f 2.2}, 
the integrand 
$\dsp |F(x+\dsl{y}{l}{n},\bbl{w}{l}(x))|$ is 
dominated by an integrable function independent of $n$.
Hence, by the dominated convergence theorem, we get
for $l \ge 1$,   
\begin{align}
\label{eq;prTheorem Lp estim.2 (i) 9}
\int_{\RN} F(x,\bbl{w}{l}(x-\dsl{y}{l}{n}))  \,\dd x  
&=
\int_{\RN} F(x+\dsl{y}{l}{n},\bbl{w}{l}(x))  \,\dd x  \\
&\notag =
\int_{\RN} F_\infty(\bbl{w}{l}(x))  \,\dd x  +o(1)
\mbox{\qquad as \s} n\toinfty.
\end{align}
Thus from~\eqref{eq;prTheorem Lp estim.2 (i) 7} and~\eqref{eq;prTheorem Lp estim.2 (i) 9} follow
\begin{align}\notag 
&\varlimsup_{L\to \Lambda}\varlimsup_{n\toinfty}
\l|  \int_{\RN} F(x,u_n)  \,\dd x  -  
\int_{\RN} F(x,\bbl{w}{0})  \,\dd x  -
\sum_{l=1}^L \int_{\RN} F_\infty (\bbl{w}{l})  \,\dd x  \r| \\
&=
\varlimsup_{L\to \Lambda}\varlimsup_{n\toinfty}
\l| 
 \int_{\RN} F(x,u_n)  \,\dd x  -  
\sum_{l=0}^L \int_{\RN} F(x,\bbl{w}{l}(x-\dsl{y}{l}{n}))  \,\dd x  
+o(1) \r| \\
&\le
\varlimsup_{L\to \Lambda}\varlimsup_{n\toinfty} 
\int_{\RN} \l| F(x,u_n) -  
\sum_{l=0}^L  F (x,\bbl{w}{l}(x-\dsl{y}{l}{n})) \r|  \,\dd x  \\
&=0, 
\end{align}
which yields 
\begin{equation}\notag  
\lim_{n\toinfty} 
\int_{\RN} F(x,u_n)  \,\dd x  
=
\int_{\RN} F(x,\bbl{w}{0})  \,\dd x  +
\sum_{l=1}^\Lambda \int_{\RN} F_\infty (\bbl{w}{l})  \,\dd x .
\end{equation}
Thus the proof is complete.
\qed

\section{Summary: a story of profile decomposition}
As is observed in the present paper, the results of profile decomposition in Sobolev spaces and the results of decomposition of integral functionals are established upon several frameworks and steps. Here we shall make a brief summary, namely, a \emph{recipe} of a theory of profile decomposition and decomposition of integral functionals.
Suppose that $(X,G)$ is a dislocation Sobolev space and that $X$ is $G$-completely continuously embedded into some function space $Y$.
The profile decomposition part consists of mainly three steps: 
(P1) finding profiles; 
(P2) energy decompositions for Sobolev norms and the exactness condition of profile decomposition; 
(P3) $G$-completely continuous embeddings and vanishing of the residual term.
More precisely, Step~(P1) is an application of \cref{theorem;ProDeco in X}. Step~(P2) is performed in a very specific way depending on the norm of $X$. This step will be more complicated, e.g., when $X$ is a homogeneous Sobolev space (see also~\cite{Okumura3}), and as a corollary of the energy estimates, the exactness condition will be obtained. In Step~(P3), vanishing of the residual term is gained by the use of $G$-completely continuous embeddings with the help of the exactness condition. 
On the other hand, the results of decomposition of integral functionals (the iterated Brezis-Lieb lemma) are obtained in the following spirit:  
they are performed in function spaces (e.g., Lebesgue or Sobolev spaces) into which $X$ is embedded $G$-completely continuously.
As are seen in \cref{theorem;Lp estim.,theorem;Lp estim.2,prop;massdecomp1}, they are also obtained for lower order derivatives. 

Hence, roughly speaking, a theory of profile decomposition and decomposition of integral functionals is founded on ``finding a triplet $(X,G,Y)$ where $(X,G)$ is a dislocation space and $X$ is embedded into $Y$ $G$-completely continuously''.

\section*{Acknowledgment}
The author would like to thank his supervisor Goro Akagi for his great support and advice during the preparation of the paper. 
He also wishes to express his gratitude to Michinori Ishiwata and Norihisa Ikoma for sharing their personal notes on the profile decomposition in Hilbert spaces, and also for their valuable comments on reorganizing the present paper. 
He also wishes to thank editors and reviewers who gave him valuable comments to improve this paper. 


\end{document}